\DeclareRobustCommand\widecheck[1]{{\mathpalette\@widecheck{#1}}}
\def\@widecheck#1#2{%
   \box\z@\hbox{\m@th$#1#2$}%
   \box\tw@\hbox{\m@th$#1%
      \widehat{%
         \vrule\@width\z@\@height\ht\z@
         \vrule\@height\z@\@width\wd\z@}$}%
   \dp\tw@-\ht\z@
   \@tempdima\ht\z@ \advance\@tempdima2\ht\tw@ \divide\@tempdima\thr@@
   \box\tw@\hbox{%
      \raise\@tempdima\hbox{\scalebox{1}[-1]{\lower\@tempdima\box\tw@}}}%
   {\ooalign{\box\tw@ \cr \box\z@}}}
\newtheorem{theorem}{Theorem} [section]
\newtheorem{lemma}[theorem]{Lemma}
\newtheorem{proposition}[theorem]{Proposition}
\newtheorem{remark}[theorem]{Remark}
\newtheorem{example}{Example}
\newtheorem{definition}[theorem]{Definition}
\newtheorem{corollary}[theorem]{Corollary}
\tikzset{
    vertex/.style = {
        %circle,
        %fill      = white,
        outer sep = 0pt,
        inner sep = 0pt,
    }
}
\tikzstyle{every node}=[circle, draw, %fill=black!10,
\begin{document}
\title[A rigorous derivation of the defocusing cubic NLS on $\mathbb{T}^3$]{A rigorous derivation of the defocusing cubic nonlinear Schr\"{o}dinger equation on $\mathbb{T}^3$ from the dynamics of many-body quantum systems}
\author[Vedran Sohinger]{Vedran Sohinger}
\address{
University of Pennsylvania, Department of Mathematics, David Rittenhouse Lab, Office 3N4B, 209 South 33rd Street, Philadelphia, PA 19104-6395, USA}
\email{vedranso@math.upenn.edu}
\urladdr{http://www.math.upenn.edu/~vedranso/}
\keywords{Gross-Pitaevskii hierarchy, Nonlinear Schr\"{o}dinger equation, density matrices, collision operator, Duhamel iteration, Quantum de Finetti Theorem, Trace class operators, A rigorous derivation of the NLS, Many-body quantum systems, Propagation of Chaos}
\subjclass[2010]{35Q55, 70E55}
\thanks{V. S. was supported by a Simons Postdoctoral Fellowship.}
\maketitle

\begin{abstract}
In this paper, we will obtain a rigorous derivation of the defocusing cubic nonlinear Schr\"{o}dinger equation on the three-dimensional torus $\mathbb{T}^3$ from the many-body limit of interacting bosonic systems. This type of result was previously obtained on $\mathbb{R}^3$ in the work of Erd\H{o}s, Schlein, and Yau \cite{ESY2,ESY3,ESY4,ESY5}, and on $\mathbb{T}^2$ and $\mathbb{R}^2$ in the work of Kirkpatrick, Schlein, and Staffilani \cite{KSS}. Our proof relies on an unconditional uniqueness result for the Gross-Pitaevskii hierarchy at the level of regularity $\alpha=1$, which is proved by using a modification of the techniques from the work of T. Chen, Hainzl, Pavlovi\'{c} and Seiringer \cite{ChHaPavSei} to the periodic setting. These techniques are based on the Quantum de Finetti theorem in the formulation of Ammari and Nier \cite{AmmariNier1,AmmariNier2} and Lewin, Nam, and Rougerie \cite{LewinNamRougerie}. In order to apply this approach in the periodic setting, we need to recall multilinear estimates obtained by Herr, Tataru, and Tzvetkov \cite{HTT}.

Having proved the unconditional uniqueness result at the level of regularity $\alpha=1$, we will apply it in order to finish the derivation of the defocusing cubic nonlinear Schr\"{o}dinger equation on $\mathbb{T}^3$, which was started in the work of Elgart, Erd\H{o}s, Schlein, and Yau \cite{EESY}. In the latter work, the authors obtain all the steps of Spohn's strategy for the derivation of the NLS \cite{Spohn}, except for the final step of uniqueness. Additional arguments are necessary to show that the objects constructed in \cite{EESY} satisfy the assumptions of the unconditional uniqueness theorem. Once we achieve this, we are able to prove the derivation result. In particular, we show \emph{Propagation of Chaos} for the defocusing Gross-Pitaevskii hierarchy on $\mathbb{T}^3$ for suitably chosen initial data.
\end{abstract}

%\textbf{FINISH INTRODUCTION!}

\section{Introduction}
\subsection{Setup of the problem}

In this paper, we will consider the \emph{Gross-Pitaevskii hierarchy} on the three-dimensional torus $\mathbb{T}^3$. Given a more general spatial domain $\Lambda=\mathbb{R}^d$ or $\mathbb{T}^d$, the Gross-Pitaevskii (GP) hierarchy on $\Lambda$ is a system of infinitely many equations given by:
\begin{equation}
\label{GrossPitaevskiiHierarchy}
\begin{cases}
i \partial_t \gamma^{(k)} + (\Delta_{\vec{x}_k}-\Delta_{\vec{x}_k'}) \gamma^{(k)}= \lambda \cdot \sum_{j=1}^{k} B_{j,k+1} (\gamma^{(k+1)})\\
\gamma^{(k)}\big|_{t=0}=\gamma_0^{(k)}.
\end{cases}
\end{equation}

For $k \in \mathbb{N}$, $\gamma_0^{(k)}$ is a complex-valued function on $\Lambda^k \times \Lambda^k$. Such a function is referred to as a \emph{density matrix of order $k$}. Each $\gamma^{(k)}=\gamma^{(k)}(t)$ is a time-dependent density matrix of order $k$. Here, $\Delta_{\vec{x}_k}$ and $\Delta_{\vec{x}'_k}$ denote the Laplace operators in the first and second component of $\Lambda^k$. More precisely, $\Delta_{\vec{x}_k}:=\sum_{j=1}^{k} \Delta_{x_j}$ and $\Delta_{\vec{x}'_k}:=\sum_{j=1}^{k}\Delta_{x'_j}$. Furthermore, $B_{j,k+1}$ denotes the \emph{collision operator} given by: 
\begin{equation}
\notag
B_{j,k+1}(\sigma^{(k+1)}):=Tr_{k+1} \big[\delta(x_j-x_{k+1}),\sigma^{(k+1)}\big],
\end{equation} 
for $\sigma^{(k+1)}$ a density matrix of order $k+1$. $Tr_{k+1}$ denotes the trace in the variable $x_{k+1}$. In particular $B_{j,k+1}(\sigma^{(k+1)})$ is a density matrix of order $k$. A more precise definition is given in \eqref{Collision_Operator} below. Finally $\lambda \in \mathbb{R}$ is a non-zero coupling constant. In this paper, we will take $\lambda \in \{1,-1\}$. If $\lambda=1$, we will call the GP hierarchy \emph{defocusing}, and if $\lambda=-1$, we will call the hierarchy \emph{focusing}.

The Gross-Pitaevskii hierarchy \eqref{GrossPitaevskiiHierarchy} is closely related to the cubic nonlinear Schr\"{o}dinger equation (NLS) on $\Lambda$. We recall that the cubic NLS on $\Lambda$ is given by:
\begin{equation}
\label{NLS}
\begin{cases}
i \partial_t u + \Delta u = \lambda \cdot |u|^2  u,\,\,\mbox{on}\,\,\mathbb{R}_t \times \Lambda\\
u\,\big|_{t=0}=\phi,\,\mbox{on}\,\Lambda.
\end{cases}
\end{equation}
This problem is called \emph{defocusing} if $\lambda=1$ and \emph{focusing} if $\lambda=-1$. 
Given a solution $u$ of \eqref{NLS}, we can build a solution to \eqref{GrossPitaevskiiHierarchy} by taking tensor products. In other words, 
\begin{equation}
\label{factorized_solution}
\gamma^{(k)}(t,\vec{x}_k;\vec{x}_k'):=\prod_{j=1}^k u(t,x_j) \overline{u(t,x_j')}=|u \rangle \langle u|^{\otimes k}(t,\vec{x}_k;\vec{x}_k')
\end{equation}
solves \eqref{GrossPitaevskiiHierarchy} for initial data given by $\gamma_0^{(k)}=|\phi\rangle \langle \phi|^{\otimes k}$. 
Here, we denote by $| \cdot \rangle \langle \cdot|$ the \emph{Dirac bracket}, which is defined as $|f \rangle \langle g| (x,x'):=f(x) \overline{g(x')}$. These are called the \emph{factorized solutions}. In this way, we can embed the nonlinear problem \eqref{NLS} into the linear problem \eqref{GrossPitaevskiiHierarchy}.

In addition to the structural connection noted above, the study of the GP hierarchy is important in the context of the rigorous derivation of the NLS from the dynamics of many-body quantum systems. In particular, given a potential $V:\Lambda \rightarrow \mathbb{R}$, we can consider the $N$-body Hamiltonian $H_N$:
\begin{equation}
\notag
H_N:=-\sum_{j=1}^{N}\Delta_j+\frac{1}{N}\sum_{\ell < j}^{N}V_N(x_{\ell}-x_j)
\end{equation}
defined on a dense subspace of $L^2_{sym}(\Lambda^N)$, the space of all permutation-symmetric elements of $L^2(\Lambda^N)$. Here, $V_N$ is a rescaled version of $V$ involving $N$. Given $\Psi_{N,0} \in L^2_{sym}(\Lambda^N)$, we can study the $N$-body Schr\"{o}dinger equation associated to $H_N$ with initial data $\Psi_{N,0}$:
\begin{equation}
\label{PsiNt_equation}
\begin{cases}
i \partial_t \psi_{N,t}=H_N \,\psi_{N,t}\\
\Psi_{N,t}\big|_{t=0}=\Psi_{N,0}.
\end{cases}
\end{equation}
The solution $\psi_{N,t}$ belongs to $L^2_{sym}(\Lambda^N)$. In particular, $\|\Psi_{N,t}\|_{L^2(\Lambda^N)}=\|\Psi_{N,0}\|_{L^2(\Lambda^N)}.$
%If $\gamma_{N,t}:=\big|\psi_{N,t} \rangle \langle \psi_{N,t}\big|$, then $\gamma_{N,t} \in L^2_{sym}(\Lambda^N \times \Lambda^N)$ solves:
%\begin{equation}
%\notag
%i \partial_t \gamma_{N,t}=[H_N,\gamma_{N,t}].
%\end{equation}
%We note that $\gamma_{N,t} \geq 0$
We define:
\begin{equation}
%\notag
\label{gammakNt_original}
\gamma^{(k)}_{N,t}:=Tr_{k+1,\ldots,N} \,\big|\Psi_{N,t} \rangle \langle \Psi_{N,t}\big|.
\end{equation}
Here $Tr_{k+1,\ldots,N}$ denotes the partial trace in $x_{k+1},\ldots,x_N$. By definition, if $k>N$, we take $\gamma^{(k)}_{N,t}:=0$. This notation is explained in more detail in Subsection \ref{Trace class} below.

The sequence $(\gamma^{(k)}_{N,t})_k$ then solves the  Bogoliubov-Born-Green-Kirkwood-Yvon (BBGKY) hierarchy:

\begin{equation}
\label{BBGKY}
i \partial_t \gamma^{(k)}_{N,t} + \big(\Delta_{\vec{x}_k}-\Delta_{\vec{x}'_k}\big) \gamma^{(k)}_{N,t}=
\end{equation}
\begin{equation}
\notag
\frac{1}{N} \sum_{\ell<j}^{k} \big[V_{N}(x_{\ell}-x_j),\gamma^{(k)}_{N,t}\big] + \frac{N-k}{N}\sum_{j=1}^{k} Tr_{k+1} \big[V_{N}(x_j-x_{k+1}),\gamma^{(k+1)}_{N,t}\big].
\end{equation}

%The associated $N$-body Schr\"{o}dinger equation is given by:
%\begin{equation}
%\label{N_body_Schrodinger_equation}
%i \partial_t \Psi_{N,t}=H_N \Psi_{N,t}.
%\end{equation}
%This equation describes the dynamics of $N$ bosonic particles. From $\Psi_{N,t}$, a solution to \eqref{N_body_Schrodinger_equation}, one can construct, for $k \in \{1,2,\ldots,N\}$:
%\begin{equation}
%\notag
%\gamma^{(k)}_{N,t}:=Tr_{k+1,\ldots,N} \big|\Psi_{N,t} \rangle \langle \Psi_{N,t}\big|. 
%\end{equation}
%The $\gamma^{(k)}_{N,t}$ will then solve the Bogoliubov-Born-Green-Kirkwood-Yvon (BBGKY) hierarchy: 

%\begin{equation}
%\notag
%...
%\end{equation}

We note that, formally, the BBGKY hierarchy converges to the defocusing GP hierarchy as $N \rightarrow \infty$. This heuristic can be made formal in the following sense:
given $\phi \in L^2(\Lambda)$ with $\|\phi\|_{L^2(\Lambda)}=1$, under additional assumptions on the sequence of initial data $(\Psi_{N})_N \in \bigoplus_{N \in \mathbb{N}} L^2_{sym}(\Lambda^N)$ in terms of $\phi$, one wants to show that there exists a sequence $N_j \rightarrow \infty$, which is independent of $k \in \mathbb{N}$ and $t\in [0,T]$ such that:
\begin{equation}
\label{convergence}
Tr\,\Big|\gamma^{(k)}_{N_j,t}-|S_t(\phi) \rangle \langle S_t(\phi)|^{\otimes k}\Big| \rightarrow 0,
\end{equation}
as $j \rightarrow \infty$. Here, $Tr\,$ denotes the trace and  $S_t$ denotes the flow map for \eqref{NLS}. 

We will refer to \eqref{convergence} as a \emph{rigorous derivation of the cubic NLS equation on $\Lambda$ from the dynamics of many-body quantum systems}. This type of result is also referred to as \emph{Propagation of Chaos} in the sense that the particles become decoupled in the limit. In our paper, we will study this derivation in the context of the defocusing problem.

A strategy for proving \eqref{convergence} was developed by Spohn \cite{Spohn}, and it consists of first showing that the sequence $(\gamma^{(k)}_{N,t})_N$ is relatively compact and that the limit points solve the GP hierarchy. Once this is established, one shows the GP hierarchy admits unique solutions in the class of objects obtained in the limit. This is a non-trivial step, due to the fact that the GP hierarchy is an infinite and non-closed system of PDEs.

Spohn \cite{Spohn} applied this approach in order to derive the nonlinear Hartree equation $i\partial_t u + \Delta u = (V*|u|^2) \cdot u$ on $\mathbb{R}^3$ when $V \in L^{\infty}(\mathbb{R}^3)$. The case of a Coulomb potential $V(x)=\pm\frac{1}{|x|}$ was later solved by Bardos, Golse, and Mauser \cite{BGM} and Erd\H{o}s and Yau \cite{EY}. An alternative proof of the latter result was subsequently given by Fr\"{o}hlich, Knowles, and Schwarz. In a series of works \cite{ESY2,ESY3,ESY4,ESY5}, Erd\H{o}s, Schlein, and Yau applied this strategy to obtain a derivation of the defocusing cubic nonlinear Schr\"{o}dinger equation on $\mathbb{R}^3$. The uniqueness step required a use of Feynman graph expansions \cite[Sections 9 and 10]{ESY2}. Subsequently, a combinatorial reformulation of the uniqueness proof on $\mathbb{R}^3$ was given by Klainerman and Machedon \cite{KM}. This argument is applicable in a slightly different class of density matrices. The authors prove the uniqueness result in this class by using spacetime estimates reminiscent of their earlier work on null-forms for the nonlinear wave equation \cite{KM2}. The fact that the objects obtained in the limit in the procedure outlined above satisfy the assumptions needed to apply the boardgame argument was first verified by Kirkpatrick, Schlein, and Staffilani \cite{KSS} when $\Lambda=\mathbb{R}^2$ and $\Lambda=\mathbb{T}^2$. Related results were subsequently proven by T. Chen and Pavlovi\'{c} \cite{CP} and X. Chen and Holmer \cite{ChenHolmer2} when $\Lambda=\mathbb{R}^3$. 

In the author's joint work with Gressman and Staffilani \cite[Proposition 3.3]{GrSoSt}, it was noted that the spacetime estimate for the free evolution operator needed to apply the boardgame argument as in \cite{KM} does not hold in the energy space when $\Lambda=\mathbb{T}^3$. This is is in sharp contrast to what happens when $\Lambda=\mathbb{R}^3$ or $\Lambda=\mathbb{T}^2$. Heuristically, this is a manifestation of the weaker dispersion on periodic domains, which also becomes weaker as the dimension becomes larger. In particular, it is not possible to apply the spacetime estimate and show uniqueness of solutions in the class from \cite{KM} in regularity $\alpha=1$. A conditional uniqueness result in this class when $\alpha=1$ is still a relevant open problem, which will require new tools to solve.

	Recently, a new approach to studying the uniqueness problem was taken by T. Chen, Hainzl, Pavlovi\'{c}, and Seiringer in \cite{ChHaPavSei}. Here, the authors prove an unconditional uniqueness result for the GP hierarchy on $\mathbb{R}^3$ by means of the \emph{Quantum de Finetti Theorem}. This theorem is a quantum analogue of the theorem of de Finetti on exchangeable sequences of random variables \cite{deFinetti1,deFinetti2}. Related results to those of de Finetti were subsequently proven by Dynkin \cite{Dynkin}, Hewitt and Savage \cite{HewittSavage}, and Diaconis and Freedman \cite{DiaconisFreedman}. The Quantum de Finetti theorem states that, under certain assumptions, density matrices can be viewed as averages over factorized states. The precise statement is recalled in Theorem \ref{Weak Quantum de Finetti} below. Similar results were first proven in the $C^*$ algebra context in the work of Hudson and Moody \cite{HudsonMoody}, and St\o rmer \cite{Stormer}. Connections to density matrices were subsequently made in the work of Ammari and Nier \cite{AmmariNier1,AmmariNier2}, and Lewin, Nam, and Rougerie \cite{LewinNamRougerie}.
%We note that the arguments in \cite{ChHaPavSei} build on variants of the boardgame argument and on graph expansions as in \cite{ESY2}. 
%\begin{remark}
%\label{Quantum_de_Finetti_Remark2}
%The formula \eqref{Quantum_de_Finetti_Star} tells us that $\gamma^{(k)}$ is an average over factorized states. We recall from \cite[Proposition 5.3]{GrSoSt} that factorized states could be estimated at the level of regularity $\alpha=1$ in an appropriate $L^2$ space. In \cite[Proposition 3.3]{GrSoSt}, it was noted that this was not the case for general non-factorized density matrices.
%\end{remark}

	We recall from \cite[Proposition 5.3]{GrSoSt} that it is possible to estimate factorized objects at the level of regularity of the energy space, i.e. when the Sobolev exponent is $\alpha=1$. Thus, it would be reasonable to expect that it is also possible to estimate averages of factorized states, as one obtains from the Quantum de Finetti Theorem. In particular, the first result that we prove is:

\vspace{5mm}

\emph{\textbf{Theorem 1} (Unconditional uniqueness for the GP hierarchy on $\mathbb{T}^3$ when $\alpha=1$)}

\emph{Let us fix $T>0$. Suppose that $(\gamma^{(k)}(t))_k$ is a mild solution to the Gross-Pitaevskii hierarchy in $L^{\infty}_{t \in [0,T]} \mathfrak{H}^1$ such that, for each $t \in [0,T]$, there exist $\Gamma_{N,t} \in L^2_{sym}(\mathbb{T}^{3N} \times \mathbb{T}^{3N})$ which are non-negative as operators with trace equal to $1$, such that:
\begin{equation}
\notag
Tr_{k+1,\ldots,N}\, \Gamma_{N,t} \rightharpoonup^{*} \gamma^{(k)}(t)
\end{equation}
as $N \rightarrow \infty$ in the weak-$*$ topology of the trace class on $L^2_{sym}(\mathbb{T}^{3k})$. Then, the solution $(\gamma^{(k)}(t))_k$ is uniquely determined by the initial data $(\gamma_0^{(k)})_k$.}

\vspace{5mm}

Theorem 1 is stated as Theorem \ref{Unconditional_uniqueness} in Section \ref{An unconditional uniqueness result}.
Given a Sobolev exponent $\alpha \in \mathbb{R}$, the space $\mathfrak{H}^{\alpha}$ is defined in Definition \ref{mathfrakHalpha} below. Furthermore, the concept of a mild solution to the Gross-Pitaevskii hierarchy is given in Definition \ref{Mild_solution} below. We note that Theorem 1 applies both in the \emph{defocusing} ($\lambda=1$) and the \emph{focusing} ($\lambda=-1$) context.
Let us remark that Theorem 1 does not immediately improve on the main result of \cite{GrSoSt} because the spaces which are used in \cite{GrSoSt} are different and cannot in general be compared to the ones used in the present paper. In particular, the norms used in \cite{GrSoSt} are of Hilbert-Schmidt type, whereas the norms used in the present paper are obtained from the trace as in \cite{ESY1,ESY2,ESY3,ESY4,ESY5}. In \cite{GrSoSt}, there is an additional condition involving the collision operator as was the case in the analysis of \cite{KM}.

Let us note that in \cite{EESY}, Elgart, Erd\H{o}s, Schlein and Yau complete the first step in the strategy developed by Spohn in the context of the derivation of the defocusing cubic NLS on $\mathbb{T}^3$. In other words, they show that the sequence $(\gamma^{(k)}_{N,t})_N$ is relatively compact and that the limit points of this sequence solve the Gross-Pitaevskii hierarchy. Their analysis applies at the level of regularity $\alpha=1$.

In \cite{EESY}, the authors state the uniqueness step as an open problem. In our paper, we will resolve this problem. As a result, we will obtain a derivation of the defocusing cubic NLS on $\mathbb{T}^3$. The result that we prove is:

\vspace{5mm}

\emph{\textbf{Theorem 2} (A derivation of the defocusing cubic NLS on $\mathbb{T}^3$)}

\emph{Let $\phi \in H^1(\mathbb{T}^3)$ with $\|\phi\|_{L^2(\mathbb{T}^3)}=1$ be given. Suppose that the sequence of initial data in the $N$-body Schr\"{o}dinger equation $(\Psi_N)_N \in \bigoplus_{N \in \mathbb{N}} L^2_{sym}(\mathbb{T}^{3N})$ satisfies the assumptions of:
\begin{itemize}
\item[1)] Bounded energy per particle: $\sup_{N \in \mathbb{N}} \langle H_N \Psi_N, \Psi_N \rangle_{L^2(\mathbb{T}^{3N})}<\infty.$ 
\item[2)] Asymptotic factorization: $Tr\,\Big|\, Tr_{\,2,3,\ldots,N} \,|\Psi_N \rangle \langle \Psi_N| - |\phi \rangle \langle \phi| \Big| \rightarrow 0$ as $N \rightarrow \infty.$ 
%given in \eqref{Bounded_energy_per_particle} and \eqref{Asymptotic_factorization} below.  
\end{itemize}
For fixed $N \in \mathbb{N}$, let $(\gamma^{(k)}_{N,t})_k$ denote the solution of the BBGKY hierarchy evolving from initial data $(\gamma^{(k)}_N)_k:=Tr_{k+1,\ldots,N} |\Psi_N \rangle \langle \Psi_N|$. Let $T>0$ be fixed. Then, there exists a sequence $N_j \rightarrow \infty$, such that for all $k \in \mathbb{N}$ and for all $t \in [0,T]$ the convergence \eqref{convergence} holds.}

\vspace{5mm}

Theorem 2 is given as Theorem \ref{Cubic_NLS_T3_derivation} in Section \ref{derivation of defocusing cubic NLS on T3}.
We note that this result is stated only in the defocusing context. In the proof of Theorem 2, we will use a uniqueness argument based on Theorem 1 and the analysis of \cite{EESY}. For details on the application of the uniqueness result in the proof of Theorem 2, we refer the reader to \eqref{Gamma_tilde_infty} and to Remark \ref{place_where_we_use_uniqueness} below. The assumptions of Theorem 2 are satisfied for purely factorized states $\Psi_N=|\phi \rangle \langle \phi|^{\otimes N}$, and the result holds in this case as is noted in Corollary \ref{Evolution_of_purely_factorized_states} below.

\subsection{Physical interpretation}

The Gross-Pitaevskii hierarchy and the nonlinear Schr\"{o}dinger equation have a related physical interpretation. Both objects occur in the context of Bose-Einstein condensation, a state of matter consisting of dilute bosonic particles which are cooled to a temperature close to absolute zero. At such a temperature, these particles tend to occupy the lowest quantum state, which can be expressed mathematically as the ground state of an energy functional related to the NLS. In this context, the NLS equation is sometimes referred to as the 
\emph{Gross-Pitaevskii equation}, following the work of Gross \cite{Gross} and Pitaevskii \cite{Pitaevskii}.

 The physical phenomenon of Bose-Einstein condensation was theoretically  predicted in the pioneering works of Bose \cite{Bose} and Einstein \cite{Einstein} in 1924-1925. Their prediction was verified by experiments conducted independently by the teams led by Cornell and Wieman \cite{CW} and Ketterle \cite{Ket} in 1995. These two groups were awarded the Physics Nobel Prize in 2001.

\subsection{Additional related results}

In addition to the references mentioned above, there is a rich literature devoted to the connection between NLS-type equations and hierarchies similar to \eqref{GrossPitaevskiiHierarchy}. In addition to the strategy developed by Spohn \cite{Spohn}, outlined above, an independent strategy based on Fock space methods was simultaneously developed by Hepp \cite{Hepp} and Ginibre and Velo \cite{GV1,GV2}. Spohn's strategy was subsequently applied in the derivation problem in \cite{ABGT,AGT,BGM,BePoSc1,BePoSc2,BdOS,CP,CT,XC3,XC4,ChenHolmer1,ChenHolmer2,ChenHolmer3,ES,FGS,FL,MichelangeliSchlein,Xie}, whereas the Fock space techniques were subsequently applied in \cite{XC1,XC2,FKP,FKS,GM,GMM1,GMM2}. The question of deriving the nonlinear Hartree equation was revisited in the work of Fr\"{o}hlich, Tsai, and Yau \cite{FrTsYau1,FrTsYau2,FrTsYau3}. Once one obtains a derivation of the NLS-type equation, it is natural to ask what is the rate of convergence. This question was first addressed by Rodnianski and Schlein \cite{RodnianskiSchlein}. Subsequent results
on this problem have been obtained in \cite{Anapolitanos, BdOS,ChenLeeSchlein,XC1,XC2,ErdosSchlein,FKP,GM,GMM1,GMM2,KP,Lee,Luhrmann,MichelangeliSchlein,Pickl1,Pickl2}.
The Gross-Pitaevskii hierarchy has been studied at the $N$-body level by Lieb and Seiringer \cite{LS}, Lieb, Seiringer and Yngvason \cite{LSY,LSY2}, and Lieb, Seiringer, Yngvason, and Solovej \cite{LSSY2}. In these works, the assumption of asymptotic factorization given in the assumption of Theorem 2 was rigorously verified for a sequence of appropriate ground states. For more details on this aspect of the problem, we refer the reader to the expository work \cite{LSSY} and to the references therein. A connection of the above problems with optical lattice models was explored in the work of Aizenman, Lieb, Seiringer, Yngvason, and Solovej \cite{ALSSY1,ALSSY2}. An expository survey of many of the above results can be found in \cite{Schlein}.

The Gross-Pitaevskii hierarchy has been studied as a Cauchy problem in its own right in the recent works of T. Chen and Pavlovi\'{c} \cite{CP1,CP_survey_article,CP4,CP3,CP}, and in their joint works with Tzirakis \cite{CPT1,CPT2}, as well as in the subsequent work of Z. Chen and Liu \cite{CL}. 
The motivation is to study the Gross-Pitaevskii hierarchy as a generalization of the nonlinear Schr\"{o}dinger equation via the factorized solutions and to prove analogues of the known results for the Cauchy problem for the NLS in the context of the GP hierarchy. By appropriately modifying the collision operator, it is also possible to consider a hierarchy which is related to the quintic NLS \cite{CP1,CP_survey_article,CP2,CP4,CP3,CPT1,CPT2}, as well as the NLS with more general power-type nonlinearities \cite{Xie}. 
The Cauchy problem associated to the Hartree equation for infinitely many particles has recently been studied by Lewin and Sabin \cite{LewinSabin1,LewinSabin2}. 
Randomization techniques similar to those used in the context of the Cauchy problem for nonlinear dispersive equations in the work of Bourgain \cite{B2} and Burq and Tzvetkov \cite{BT1} were used in order to study randomized forms of the Gross-Pitaevskii hierarchy in the author's joint work with Staffilani \cite{SoSt}, as well as in the author's work \cite{VS}.

Techniques similar to those used in \cite{ChHaPavSei} have been applied in order to show scattering results in the subsequent work of T. Chen, Hainzl, Pavlovi\'{c} and Seiringer \cite{ChHaPavSei2}. The unconditional uniqueness result of \cite{ChHaPavSei} was subsequently extended to lower regularities by Hong, Taliaferro, and Xie in \cite{HTX}. We note that the methods used in \cite{HTX} do not directly apply to the periodic setting due to the weaker dispersion. It is an interesting open problem to see if the unconditional uniqueness in the periodic section can be extended to lower regularities.

\subsection{Main ideas of the proof}
The proof of the unconditional uniqueness result in Theorem 1 will be based on the Weak Quantum de Finetti Theorem used in the work of T. Chen, Pavlovi\'{c}, Hainzl, and Seiringer \cite{ChHaPavSei} in the non-periodic setting. Several modifications will be necessary in order to apply these methods in the periodic setting. The main point is that one needs to use Strichartz estimates on $\mathbb{T}^3$. We will use the trilinear estimate from the work of Herr, Tataru, and Tzvetkov \cite{HTT}. This result is recalled in Proposition \ref{Star} below. In particular, using this estimate we are able to prove a product estimate at the level of $H^1$ regularity in Proposition \ref{Multilinear_estimate}. Following the terminology of \cite{ChHaPavSei}, this result is used in order to prove the \emph{bound on the $L^2$ level} \eqref{Bound_on_the_L2_level} and the \emph{bound on the $H^1$ level} \eqref{Bound_on_the_H1_level}. As in \cite{ChHaPavSei}, these estimates allow us to bound the contributions from the factors corresponding to distinguished and regular trees in Sub-subsection \ref{Estimating the factors corresponding to distinguished and regular trees} below. 

The proof of the derivation of the defocusing cubic nonlinear Schr\"{o}dinger equation on $\mathbb{T}^3$, i.e. of Theorem 2, is based on applying Theorem 1 to the solutions of the defocusing Gross-Pitaevskii hierarchy on $\mathbb{T}^3$, which were previously constructed in the work of Elgart, Erd\H{o}s, Schlein, and Yau \cite{EESY}. In particular, in this paper, the authors construct solutions without a statement about uniqueness. The spaces in which they work do not originally involve the trace norm. In Section \ref{derivation of defocusing cubic NLS on T3}, we show that the assumptions of Theorem 1 hold for these solutions. As an intermediate step, we will have to use the approximation argument stated in Subsection \ref{Approximation} in order to control higher powers of the $N$-body Hamiltonian as was needed in the assumptions in \cite{EESY}. We note that such a procedure was used in \cite{ESY2,ESY4,ESY5} in the non-periodic setting, as well as in \cite{KSS} in the periodic setting.

\subsection{Organization of the paper}
In Section \ref{Notation and known facts from Harmonic and Functional Analysis}, we will give the relevant notation and we will recall some important facts from Harmonic and Functional analysis.
In particular, in Subsection \ref{Function spaces}, we will recall the definition of the function spaces which we will use on $\mathbb{T}^3$. An important trilinear estimate from the work of Herr, Tataru, and Tzvetkov \cite{HTT} is recalled in Proposition \ref{Star}.
Subsection \ref{Density matrices} is devoted to notation and operations concerning density matrices. In Subsection \ref{Trace class}, we recall some important facts about trace class operators. Section \ref{Multilinear estimates} is devoted to the proof of the product estimate at the level of $H^1$ regularity, which is stated in Proposition \ref{Multilinear_estimate}.
In Section \ref{An unconditional uniqueness result}, we prove Theorem 1, which is stated in Theorem \ref{Unconditional_uniqueness}. The variant of the Quantum de Finetti Theorem which we will use is recalled in Subsection \ref{The Weak Quantum de Finetti Theorem}. The unconditional uniqueness result stated in Theorem 1 is proved in Subsection \ref{An unconditional uniqueness result}. Section \ref{derivation of defocusing cubic NLS on T3} is devoted to the proof of Theorem 2, which is stated as Theorem \ref{Cubic_NLS_T3_derivation}. In Subsection \ref{Approximation}, we recall the relevant approximation procedure which allows us to control  higher powers of $H_N$ applied to the initial data. A comparison of different forms of convergence which appear in the problem is given in Subsection \ref{A comparison of different forms of convergence}. In Subsection \ref{Proof of Theorem 2}, we give a proof of the derivation of the defocusing cubic NLS on $\mathbb{T}^3$, stated in Theorem 2.

\vspace{5mm}

\textbf{Acknowledgements:}
The author would like to thank Thomas Chen and Nata\v{s}a Pavlovi\'{c} for encouraging him to work on this problem. He would like to thank Benjamin Schlein for discussions concerning the references \cite{EESY,ESY1,ESY2,ESY3,ESY4,ESY5}, and Sebastian Herr for discussions concerning the references \cite{HTT,Strunk}, as well as for useful discussions on function spaces, which lead to the simplification of the exposition of Section \ref{Multilinear estimates}. The author is grateful to Antti Knowles for teaching a class on the dynamics of large quantum systems in the Fall of 2009. He would like to thank Gigliola Staffilani for many helpful comments. V.S. was supported by a Simons Postdoctoral Fellowship.

\section{Notation and known facts from Harmonic and Functional Analysis}
\label{Notation and known facts from Harmonic and Functional Analysis}

In our paper, given non-negative quantities $A$ and $B$, let us denote by $A \lesssim B$ an estimate of the form $A \leq CB$, for some constant $C>0$. If $C$ depends on a parameter $p$, we write the inequality as $A \lesssim_p B$ or we emphasize that $C=C(p)$. Throughout our paper, \emph{we will take the spatial domain $\Lambda$ to be the three-dimensional torus $\mathbb{T}^3=[0,2\pi]^3$}. 

\subsection{Function spaces}
\label{Function spaces}
Given $f \in L^2(\Lambda)$, we can define its Fourier transform as:
$$\widehat{f}(n):=\int_{\Lambda} f(x) e^{- i \langle x, n \rangle} dx.$$
Here, $n \in \mathbb{Z}^3$ and $\langle \cdot, \cdot \rangle$ denotes the inner product on $\mathbb{R}^3$.

Let us take the following convention for the Japanese bracket $\langle \cdot \rangle$ on $\Lambda$:
$$\langle x \rangle: =\sqrt{1+|x|^2}.$$
Given $s \in \mathbb{R}$, the Sobolev space $H^s(\Lambda)$ is the normed space corresponding to:
\begin{equation}
\notag
\|f\|_{H^s(\Lambda)}:=\big(\sum_{n \in \mathbb{Z}^3} |\widehat{f}(n)|^2 \cdot \langle n \rangle^{2s}\big)^{\frac{1}{2}}.
\end{equation}

Let us recall the periodic version of the Sobolev embedding estimate:
\begin{equation}
\label{Sobolev_embedding_torus}
\|\phi\|_{L^6(\Lambda)} \lesssim \|\phi\|_{H^1(\Lambda)}.
\end{equation}
The estimate \eqref{Sobolev_embedding_torus} can be deduced from the analogous estimate on $\mathbb{R}^3$ by using an extension argument. More precisely, given $\phi \in H^1(\Lambda)$, we define $\phi^{e}$ to be the periodic extension of $\phi$ to all of $\mathbb{R}^3$. We then let $F \in C_0^{\infty}(\mathbb{R}^3)$ be a function which is identically $1$ on $\Lambda$. Then, $\|F \cdot \phi^e\|_{L^6(\mathbb{R}^3)} \sim \|\phi\|_{L^6(\Lambda)}$ and $\|F \cdot \phi^{e}\|_{H^1(\mathbb{R}^3)} \lesssim \|\phi\|_{H^1(\Lambda)}$.
By the Sobolev embedding theorem on $\mathbb{R}^3$, we know that $\|F \cdot \phi^{e}\|_{L^6(\mathbb{R}^3)} \lesssim \|F \cdot \phi^{e}\|_{H^1(\mathbb{R}^3)}$. We can hence deduce \eqref{Sobolev_embedding_torus}.

Suppose that $N=2^j$ is a dyadic integer.
We will denote by $P_N$ the \emph{Littlewood-Paley projection} to frequencies which are of the order $N$. In order to do this, let us first consider $\Phi \in C_0^{\infty}(-2,2)$, a non-negative, even function, such that $\Phi \equiv 1$ on $[-1,1]$. We define the function $\Phi_N$ on $\mathbb{Z}^3$ by:

\begin{equation}
\notag
\begin{cases}
\Phi \big(\frac{|\xi|}{N}\big)-\Phi \big(\frac{2|\xi|}{N}\big),\,\mbox{for}\,\,N \geq 2.\\
\Phi(|\xi|),\,\,\mbox{for}\,\,N=1.
\end{cases}
\end{equation}

The operator $P_N$ is defined as:

\begin{equation}
\label{FrequencyProjection}
(P_N f)\,\,\widehat{}\,\,(\xi):=\Phi_N(\xi) \cdot \widehat{f}(\xi).
\end{equation}
In particular, 
\begin{equation}
\notag
%\label{FrequencyProjection2}
supp \, (P_Nf)\,\,\widehat{}\, \subseteq \{|\xi| \sim N\}.
\end{equation}

\begin{proposition} (a consequence of Proposition 3.5 in \cite{HTT})

\label{Star}
There exists a universal constant $\delta>0$ such that for any dyadic integers $N_1,N_2,N_3$ with $N_1 \geq N_2 \geq N_3 \geq 1$, and for any finite interval $I \subseteq \mathbb{R}$, it is the case that:
\begin{equation}
\notag
\|P_{N_1}e^{it\Delta}f_1 \cdot P_{N_2}e^{it\Delta}f_2 \cdot P_{N_3}e^{it\Delta}f_3\|_{L^2(I \times \Lambda)}
\end{equation}
\begin{equation} 
\notag
\lesssim N_2 N_3 \max\{\frac{N_3}{N_1},\frac{1}{N_2}\}^{\delta} \cdot \|P_{N_1}f_1\|_{L^2(\Lambda)} \cdot \|P_{N_2}f_2\|_{L^2(\Lambda)} \cdot \|P_{N_3}f_3\|_{L^2(\Lambda)}.
\end{equation}
%Here $Y^s(I)$ denotes the corresponding restriction norm.
\end{proposition}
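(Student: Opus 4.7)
The plan is to deduce the stated estimate directly from Proposition 3.5 of Herr--Tataru--Tzvetkov, together with the standard linear embedding of free Schr\"odinger evolutions into their adapted function spaces. Recall that in \cite{HTT} the relevant trilinear estimate is formulated in terms of the spaces $U^2_\Delta$ and $Y^0$, built from the atomic/bounded-variation framework of Koch--Tataru. Their result reads
\[
\|P_{N_1}u_1 \cdot P_{N_2}u_2 \cdot P_{N_3}u_3\|_{L^2(I \times \Lambda)} \lesssim N_2 N_3 \max\Bigl\{\tfrac{N_3}{N_1},\tfrac{1}{N_2}\Bigr\}^{\delta}\prod_{i=1}^{3}\|P_{N_i}u_i\|_{Y^0(I)}
\]
for arbitrary $u_i \in Y^0(I)$, where $\delta>0$ is the universal constant furnished by the decoupling/orthogonality analysis in their proof. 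This general, spacetime-norm version is the substantive content, and I would simply quote it.

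The reduction to our statement has two inputs. First, Littlewood--Paley projectors are Fourier multipliers and commute with $e^{it\Delta}$, so one may rewrite $P_{N_i} e^{it\Delta} f_i = e^{it\Delta}(P_{N_i}f_i)$. Second, for any $g \in L^2(\Lambda)$ one has the embedding
\[
\|e^{it\Delta}g\|_{Y^0(I)} \lesssim \|g\|_{L^2(\Lambda)},
\]
which is an immediate consequence of the fact that $t \mapsto e^{it\Delta}g$ is itself a single $U^2_\Delta$-atom with $U^2_\Delta$-norm equal to $\|g\|_{L^2}$, together with the inclusion $U^2_\Delta(I) \hookrightarrow V^2_{rc,\Delta}(I) \hookrightarrow Y^0(I)$. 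Applying the HTT estimate with $u_i = e^{it\Delta}f_i$ and inserting this embedding on each of the three factors produces precisely the claimed inequality.

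The only small housekeeping task is to confirm that the HTT estimate, which in \cite{HTT} is stated on a reference time interval, descends to an arbitrary finite interval $I \subseteq \mathbb{R}$. This is immediate from the definition of the restricted space $Y^0(I)$ via infimum over extensions and from the time-translation invariance of the free Schr\"odinger flow. Consequently, there is no real obstacle to overcome here: the entire nontrivial analysis (refined bilinear/trilinear Strichartz estimates for the torus Laplacian, and the small power gain $\delta$) is already carried out in \cite{HTT}, and what remains for us is only the observation that free evolutions sit inside $Y^0$ with the expected bound. For this reason I would present the result as a short corollary whose proof is essentially one line of norm comparison.
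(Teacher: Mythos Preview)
Your proposal is correct and follows essentially the same approach as the paper: the paper itself says the result ``can be deduced by combining \cite[Proposition 3.5]{HTT} together with \cite[Proposition 2.10]{HTT} and the inclusion of spaces given in \cite[Proposition 2.8]{HTT}'' and omits the details, which is precisely the trilinear-$Y^0$-estimate-plus-free-evolution-embedding argument you outline. The references to Propositions 2.8 and 2.10 in \cite{HTT} encode exactly the atomic/inclusion facts ($U^2_\Delta \hookrightarrow V^2_{rc,\Delta}$ and the control of free solutions) that you invoke.
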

Let us note that the implied constant in the above estimate depends on the length $|I|$ of the interval $I$. It can be taken to be an increasing function of $|I|$. The result of Proposition \ref{Star} can be deduced by combining \cite[Proposition 3.5]{HTT} together with \cite[Proposition 2.10]{HTT} and the inclusion of spaces given in \cite[Proposition 2.8]{HTT}. We will omit the details. An analogous estimate was shown on the $3D$ irrational torus in \cite[Proposition 4.1]{Strunk}. 

\subsection{Density matrices}
\label{Density matrices}
Let us fix $k \in \mathbb{N}$.
A \emph{density matrix of order $k$} on $\Lambda$ is a function:
$$\gamma^{(k)}:\Lambda^k \times \Lambda^k \rightarrow \mathbb{C}.$$
It is also sometimes called a \emph{$k$-particle density matrix}.
%We will primarily work in the case when $\mathcal{H}=L^2(\mathbb{T}^{3k})$ for some $k \in \mathbb{N}$.

Given $\gamma^{(k)} \in L^2(\Lambda^k \times \Lambda^k)$, we say that $\gamma^{(k)} \in L^2_{sym}(\Lambda^k \times \Lambda^k)$ if 
\begin{equation}
\notag
%\label{permutation_symmetry}
\gamma^{(k)}(x_{\sigma(1)},\ldots,x_{\sigma(k)};x'_{\sigma(1)},\ldots,x'_{\sigma(k)})=\gamma^{(k)}(x_1,\ldots,x_k;x'_1,\ldots,x'_k)
\end{equation} 
for all $(\vec{x}_k;\vec{x}'_k)=(x_1,\ldots,x_k;x'_1,\ldots,x'_k) \in \Lambda^k \times \Lambda^k$ and for all $\sigma \in S^k$. 
Similarly, if $\Psi \in L^2(\Lambda^k)$, we say that $\Psi \in L^2_{sym}(\Lambda^k)$ if:
\begin{equation}
\notag
\Psi(x_{\sigma(1)},\ldots,x_{\sigma(k)})=\Psi(x_1,\ldots,x_k)
\end{equation}
for all $\vec{x}_k=(x_1,\ldots,x_k) \in \Lambda^k$ and for all $\sigma \in S_k$. Here, we will always assume that it is clear from context whether the object we are considering is a function or a density matrix. In other words, we will be able to distinguish $L^2_{sym}(\Lambda^k \times \Lambda^k)$ from $L^2_{sym}(\Lambda^{2k})$.
Let us note that, if $\Psi \in L^2_{sym}(\Lambda^k)$, then $\big|\Psi \rangle \langle \Psi\big| \in L^2_{sym}(\Lambda^k \times \Lambda^k)$. We will sometimes consider $\gamma^{(k)}$ as an operator on $L^2(\Lambda^k)$ by means of identifying an integral operator with its kernel. We will use this convention throughout our work.

Given $\gamma^{(k)}$, a density matrix of order $k$ which belongs to $L^2(\Lambda^k \times \Lambda^k)$, its Fourier transform 
$(\gamma^{(k)})\,\,\widehat{}\,\,$ is defined as follows:
\begin{equation}
\label{gammak_Fourier_transform}
(\gamma^{(k)})\,\,\widehat{}\,\,(\vec{n}_k;\vec{n}'_k): = \int_{\Lambda^k \times \Lambda^k}\gamma^{(k)}(\vec{x}_k;\vec{x}'_k)\,\cdot 
e^{-i \cdot \sum_{j=1}^{k} \langle x_j, n_j \rangle + i \cdot \sum_{j=1}^{k} \langle x'_j, n'_j \rangle} \,d\vec{x}_k \, d\vec{x}'_k,
\end{equation}
for $(\vec{n}_k;\vec{n}'_k)=(n_1,\ldots,n_k;n'_1,\ldots,n'_k) \in \mathbb{Z}^{3k} \times \mathbb{Z}^{3k}$. 
The convention for \eqref{gammak_Fourier_transform} is consistent with our definition of factorized solutions \eqref{factorized_solution}.
We will also write $(\gamma^{(k)})\,\,\widehat{}\,\,$ as $\widehat{\gamma}^{(k)}$ for simplicity of notation.

The differential operator $i \partial_t + \big(\Delta_{\vec{x}_k}-\Delta_{\vec{x}'_k}\big)$ acts on density matrices of order $k$. Its associated \emph{free evolution operator} is denoted by $\mathcal{U}^{(k)}(t)$. More precisely, for $\gamma^{(k)}$, a density matrix of order $k$:
\begin{equation}
\notag
%\label{free_evolution}
\mathcal{U}^{(k)}(t)\,\gamma^{(k)}:=e^{it \sum_{j=1}^{k} \Delta_{x_j}} \gamma^{(k)} e^{-it \sum_{j=1}^{k} \Delta_{x_j'}}.
\end{equation}
In this way, we obtain a solution to:
\begin{equation}
\notag
\Big(i \partial_t + (\Delta_{\vec{x}_k}-\Delta_{\vec{x}_k'})\Big)\,\mathcal{U}^{(k)}(t)\,\gamma^{(k)}=0.
\end{equation}

Given $\alpha \in \mathbb{R}$, we can use the Fourier transform and define the operation $S^{(k,\alpha)}$ of \emph{differentiation of order $\alpha$} on matrices of order $k$. Given $\gamma^{(k)}$ a density matrix of order $k$, we let $S^{(k,\alpha)}\gamma^{(k)}$ be the density matrix of order $k$ whose Fourier transform is given by:
\begin{equation}
%\label{FractionalDerivative}
\notag
\big(S^{(k,\alpha)} \gamma^{(k)} \big)\,\,\widehat{}\,\,(n_1,\ldots,n_k;n'_1,\ldots,n'_k):=
\end{equation}
\begin{equation}
\notag
\langle n_1 \rangle^{\alpha} \cdots \langle n_k \rangle^{\alpha} \cdot \langle n'_1 \rangle^{\alpha} \cdots \langle n'_k \rangle^{\alpha} \cdot \widehat{\gamma}^{(k)} (n_1, \ldots, n_k;n'_1, \ldots, n'_k).
\end{equation}

We now define the \emph{collision operator}.
Given $k \in \mathbb{N}$ and $j \in \{1,2,\ldots,k\}$, the collision operator $B_{j,k+1}$ acts linearly on density matrices of order $k+1$ as:

\begin{equation}
\label{Collision_Operator}
B_{j,k+1}\,(\gamma^{(k+1)}) = Tr_{\,k+1}\,\big[\delta(x_j-x_{k+1}),\gamma^{(k+1)}\big].
\end{equation}
\begin{equation}
\notag
=\int_{\Lambda} \Big(\delta(x_j-x_{k+1})\, \gamma^{(k+1)}(\vec{x}_k,x_{k+1};\vec{x}_k',x_{k+1})\, dx_{k+1}- \delta(x_j'-x_{k+1}) \,\gamma^{(k+1)}(\vec{x}_k,x_{k+1};\vec{x}_k',x_{k+1})\Big)\, dx_{k+1}.
\end{equation}
Here, $\delta$ denotes the Dirac delta function. We note that $B_{j,k+1}\,(\gamma^{(k+1)})$ is a density matrix of order $k$. We sometimes omit the parenthesis and write $B_{j,k+1}\,\gamma^{(k+1)}$. The \emph{full collision operator} $B_{k+1}$ is given by:

\begin{equation}
\label{Bk+1_sum}
B_{k+1}:=\sum_{j=1}^{k} B_{j,k+1}.
\end{equation}

\subsection{Some facts about trace class operators}
\label{Trace class}

Let $\mathcal{H}$ be a separable Hilbert space over $\mathbb{C}$ with inner product $\big( \cdot, \cdot \big)$.
Let $A$ be a bounded operator on $\mathcal{H}$ and let $\{\vec{e}_j,\,j \in \mathbb{N}\}$ be an orthonormal basis of $\mathcal{H}$. We say that A belongs to the \emph{trace class on $\mathcal{H}$} if the quantity:
\begin{equation}
\notag
\|A\|_{1}:=\sum_{j=1}^{+\infty} \big( (A^{*}A)^{\frac{1}{2}}\vec{e}_j, \vec{e}_j \big)
\end{equation}
is finite. If this is the case, the quantity $\|A\|_{1}$ can be shown to be independent of the choice of basis. The space of all trace-class operators on $\mathcal{H}$ is denoted by $\mathcal{L}^1(\mathcal{H})$.

Given $A \in \mathcal{H}$ and $\{\vec{e}_j,\,j \in \mathbb{N}\}$ as above, the \emph{trace of $A$:} 
\begin{equation}
\notag
Tr\,A:=\sum_{j=1}^{+\infty} \big( A \vec{e}_j,\vec{e}_j \big)
\end{equation}
is well-defined and is independent of the choice of basis.  This fact is proved in \cite[Theorem VI.24]{ReedSimon}. We will sometimes write $Tr A$ as $Tr_{\,\mathcal{H}} \,A$ in order to emphasize that the operator $A$ acts on the Hilbert space $\mathcal{H}$. We note that $\|A\|_1=Tr\,|A|=Tr\, \big(A^{*}A\big)^{\frac{1}{2}}$.
 
Let $\mathcal{K}(\mathcal{H})$ denote the space of all compact operators on $\mathcal{H}$. It is a normed vector space with respect to the operator norm $\|\cdot\|$. The following duality result then holds:

\begin{equation}
\label{duality}
\big(\mathcal{L}^1(\mathcal{H}),\|\cdot\|_1\big)=\big(\mathcal{K}(\mathcal{H}),\|\cdot\|\big)^{*}
\end{equation}
where the duality pairing is given by:
\begin{equation}
\label{duality_pairing}
(T,K) \in \mathcal{L}^1(\mathcal{H}) \times \mathcal{K}(\mathcal{H}) \mapsto Tr\,(TK).
\end{equation}
For the proof of the above result we refer the reader to \cite[Theorem VI.26]{ReedSimon}.

Given $N \in \mathbb{N}$, we let $\mathcal{H}^N:=\otimes_{j=1}^{N} \mathcal{H}$. If $A \in \mathcal{L}^1(\mathcal{H}^N)$, and $k \in \{1,2,\ldots,N\}$, we define $Tr_{\,k+1,\ldots,N}\,A$ to be the element of $\mathcal{L}^1(\mathcal{H}^k)$ obtained by taking the trace of $A$ in the last $N-k$ factors of $\mathcal{H}$. If $k>N$, we define $Tr_{\,k+1,\ldots,N}\,A$ to be equal to zero. We refer to this procedure as taking the partial trace.

Throughout the paper, we will primarily consider the case when $\mathcal{H}=L^2(\Lambda^k)$ with respect to Lebesgue measure for some $k \in \mathbb{N}$. In this case, we will write $\mathcal{L}^1(\mathcal{H})$ as $\mathcal{L}^1_k$ and $\mathcal{K}(\mathcal{H})$ as $\mathcal{K}_k$. These are subspaces of the space of density matrices of order $k$ on $\Lambda$. We observe that if $A \in \mathcal{L}^1_k$ is an integral operator with kernel $\gamma^{(k)} \in L^2(\Lambda^k \times \Lambda^k)$, then:

\begin{equation}
\notag
Tr\,A=\int_{\Lambda^k \times \Lambda^k} \gamma^{(k)}(\vec{x}_k;\vec{x}_k)\,d\vec{x}_k.
\end{equation}
For a more detailed discussion on trace class operators, we refer the reader to \cite[Sections 18 and 19]{Conway} and \cite[Section VI]{ReedSimon}.

\section{Multilinear estimates}
\label{Multilinear estimates}

We consider the spatial domain $\Lambda:=\mathbb{T}^3$. The main result of this section is:

\begin{proposition}
\label{Multilinear_estimate}
Suppose that $s \in [0,1]$ and suppose that $I$ is bounded time interval. Then, the following estimate holds:
\begin{equation}
%\label{TrilinearEstimate}
\notag
\||\nabla_x|^s (e^{it\Delta} f_1 \cdot \overline{e^{it\Delta} f_2} \cdot e^{it\Delta} f_3)\|_{L^2(I \times \Lambda)} 
\end{equation}
\begin{equation}
\notag
\lesssim \min\{ \|f_1\|_{H^s} \cdot \|f_2\|_{H^1} \cdot \|f_3\|_{H^1},\|f_1\|_{H^1} \cdot \|f_2\|_{H^s} \cdot \|f_3\|_{H^1}, \|f_1\|_{H^1} \cdot \|f_2\|_{H^1} \cdot \|f_3\|_{H^s}\},
\end{equation}
whenever $f_1,f_2,f_3$ are functions on $\Lambda$ for which the right-hand side is well-defined. Here, the implied constant depends on $s$ and on the length of $I$.
\end{proposition}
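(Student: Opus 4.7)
The plan is to prove the estimate by combining a Littlewood--Paley decomposition of each factor with the trilinear Strichartz estimate of Proposition~\ref{Star}, followed by a dyadic summation argument. Since the three bounds appearing inside the $\min$ are symmetric with respect to which factor carries the lower regularity, it suffices to prove one of them; I will treat the case $\|f_1\|_{H^1}\|f_2\|_{H^1}\|f_3\|_{H^s}$, the others following by identical reasoning after relabeling.

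Set $u_j := e^{it\Delta}f_j$ and decompose $u_j = \sum_{N_j} P_{N_j} u_j$ dyadically. Since $P_{N_1}u_1 \cdot \overline{P_{N_2}u_2} \cdot P_{N_3}u_3$ has spatial Fourier support in $\{|\xi| \lesssim N_{\max}\}$, where $N_{\max} := \max(N_1,N_2,N_3)$, and since $s \in [0,1]$, the Bernstein-type bound $\||\nabla_x|^s g\|_{L^2} \lesssim N_{\max}^s \|g\|_{L^2}$ together with the triangle inequality yields
$$\||\nabla_x|^s(u_1 \overline{u_2} u_3)\|_{L^2(I \times \Lambda)} \lesssim \sum_{N_1,N_2,N_3} N_{\max}^s\, \|P_{N_1}u_1 \cdot \overline{P_{N_2}u_2} \cdot P_{N_3}u_3\|_{L^2(I \times \Lambda)}.$$
The key observation enabling the direct application of Proposition~\ref{Star} despite the complex conjugation is that $|P_{N_1}u_1 \cdot \overline{P_{N_2}u_2} \cdot P_{N_3}u_3| = \prod_j |P_{N_j}u_j|$ pointwise, so the $L^2_{t,x}$ norm is unchanged if the conjugation is removed. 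Relabeling so that the dyadic scales are sorted $M_1 \geq M_2 \geq M_3$, Proposition~\ref{Star} gives
$$\|P_{N_1}u_1 \cdot \overline{P_{N_2}u_2} \cdot P_{N_3}u_3\|_{L^2(I \times \Lambda)} \lesssim M_2 M_3 \max\{M_3/M_1,\, 1/M_2\}^{\delta} \prod_{j=1}^{3}\|P_{N_j}f_j\|_{L^2}.$$

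To convert $\|P_{N_j}f_j\|_{L^2}$ into Sobolev weights, I would introduce the $\ell^2$ sequences $b^{(1)}_{N_1} := N_1\|P_{N_1}f_1\|_{L^2}$, $b^{(2)}_{N_2} := N_2\|P_{N_2}f_2\|_{L^2}$, $b^{(3)}_{N_3} := N_3^s\|P_{N_3}f_3\|_{L^2}$, whose $\ell^2(\mathbb{N})$-norms are comparable to $\|f_1\|_{H^1}$, $\|f_2\|_{H^1}$, $\|f_3\|_{H^s}$ respectively. A direct case analysis over the six orderings of $(N_1,N_2,N_3)$ shows that the combinatorial weight
$$\frac{M_1^s M_2 M_3}{N_1 N_2 N_3^s}$$
is uniformly bounded: in each case it equals either $1$ or a ratio of the form $(N_a/N_b)^{1-s}$ with $N_a \leq N_b$, which is $\leq 1$ because $s \in [0,1]$. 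The estimate therefore reduces to
$$\sum_{N_1,N_2,N_3} \max\{M_3/M_1,\, 1/M_2\}^{\delta}\, b^{(1)}_{N_1} b^{(2)}_{N_2} b^{(3)}_{N_3} \lesssim \prod_{j=1}^3 \|b^{(j)}\|_{\ell^2(\mathbb{N})}.$$

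The main obstacle is this final trilinear dyadic sum. I would split it into the two subregions where $M_3/M_1 \geq 1/M_2$ and where $M_3/M_1 < 1/M_2$, treating each by a Schur-type argument. In the former, the factorization $(M_3/M_1)^{\delta} = (M_3/M_2)^{\delta}(M_2/M_1)^{\delta}$, combined with iterated Cauchy--Schwarz in the dyadic variables $M_3$, then $M_2$, then $M_1$, exploits the uniform boundedness of the geometric sum $\sum_{M \leq N}(M/N)^{\delta} \lesssim 1$ to close the bound. The latter subregion is handled analogously using the factor $M_2^{-\delta}$. The essential feature of Proposition~\ref{Star} which this argument relies on is the strict positivity $\delta > 0$; without it, the borderline case of three comparable scales would produce a logarithmic divergence, so the quantitative gain in the Herr--Tataru--Tzvetkov estimate is exactly what makes the summation converge.
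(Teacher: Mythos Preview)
Your reduction to the trilinear dyadic sum is correct up to and including the observation that the weight $M_1^s M_2 M_3 /(N_1 N_2 N_3^s)$ is bounded by $1$ in every ordering; that bookkeeping is clean. The gap is in the very last step, the Schur argument for the subregion $M_3/M_1 < 1/M_2$. In that region the decay factor from Proposition~\ref{Star} is $M_2^{-\delta}$, which carries no dependence on $M_1$ whatsoever. Your claimed inequality
\[
\sum_{N_1,N_2,N_3} \max\{M_3/M_1,\,1/M_2\}^{\delta}\, b^{(1)}_{N_1} b^{(2)}_{N_2} b^{(3)}_{N_3} \;\lesssim\; \prod_{j} \|b^{(j)}\|_{\ell^2}
\]
is therefore false: take any nonnegative $a \in \ell^2 \setminus \ell^1$ with $a_1 \neq 0$, set $b^{(j)}=a$, and restrict to the ordering $N_1 \geq N_2 \geq N_3$ with $N_2=N_3=1$. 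Then $M_2=1$ so the weight equals $1$, and the contribution is $a_1^2 \sum_{N_1} a_{N_1} = a_1^2 \|a\|_{\ell^1} = \infty$. No rearrangement of Cauchy--Schwarz in the $M_j$ variables will repair this, because after your initial triangle inequality there is simply no mechanism tying $M_1$ to anything else.

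The issue is that applying the triangle inequality at the outset discards the $L^2$-orthogonality between products localized at different values of $M_1$. The paper avoids this loss: when $M_1 \gg M_2$ the triple product $P_{N_1}\psi_1 \cdot \overline{P_{N_2}\psi_2} \cdot P_{N_3}\psi_3$ has spatial Fourier support in an annulus of size $\sim M_1$, so products at different $M_1$ are almost orthogonal in $L^2(I\times\Lambda)$. The paper captures this by dualizing the norm of the \emph{sum} (not the sum of norms) against a single test function $u$, then inserting a projector $P_{N_0}$ with $N_0 \sim N_1$ on $u$; the resulting factor $\|P_{N_0}u\|_{L^2}$ is what furnishes the missing $\ell^2$ summability in $N_1$ via Cauchy--Schwarz in $N_0\sim N_1$. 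In the complementary regime $M_1 \sim M_2$ the triangle inequality is harmless, since one can then Cauchy--Schwarz directly in the pair $(N_1,N_2)$. Your first subregion $M_3/M_1 \geq 1/M_2$ actually does close by the Schur argument you describe, but to handle the second subregion you need to go back and retain the $L^2$ structure before summing in the largest frequency.
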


\begin{proof}
Suppose that $f_1,f_2,f_3$ are as in the assumptions of the proposition. We will dyadically localize the factors according to \eqref{FrequencyProjection}.
Namely, for fixed dyadic integers $N_1,N_2,N_3$, we want to estimate the expression:

\begin{equation}
\notag
\||\nabla_x|^s (P_{N_1} e^{it\Delta} f_1 \cdot P_{N_2} \overline{e^{it\Delta} f_2} \cdot P_{N_3}e^{it\Delta} f_3)\|_{L^2(I \times \Lambda)} 
\end{equation}
which is:
\begin{equation}
\notag
\lesssim_s \max\{N_1,N_2,N_3\}^s \cdot \|P_{N_1} e^{it\Delta} f_1 \cdot P_{N_2} \overline{e^{it\Delta} f_2} \cdot P_{N_3}e^{it\Delta} f_3\|_{L^2(I \times \Lambda)} 
\end{equation}
\begin{equation}
\label{productNj}
=\max\{N_1,N_2,N_3\}^s \cdot \|P_{N_1}e^{it\Delta}f_1 \cdot P_{N_2}e^{it\Delta}f_2 \cdot P_{N_3} e^{it\Delta}f_3\|_{L^2(I \times \Lambda)}.
\end{equation}

Hence, we do not have to keep track of complex conjugates throughout the proof.
Let us define: 
\begin{equation}
\label{psi_j}
\psi_j:=e^{it\Delta}f_j\,\,\mbox{for $j=1,2,3$.}
\end{equation}   

Since $\|P_{N_1} \psi_1 \cdot P_{N_2} \psi_2 \cdot P_{N_3} \psi_3\|_{L^2(I \times \Lambda)}$ is symmetric under the permutation of the functions $f_j$, we may assume without loss of generality that $N_1 \geq N_2 \geq N_3 \geq 1$. In this case, the expression in \eqref{productNj} equals:

\begin{equation}
\notag
N_1^s \cdot \|P_{N_1} \psi_1 \cdot P_{N_2} \psi_2 \cdot P_{N_3} \psi_3\|_{L^2(I \times \Lambda)}=:K_{N_1,N_2,N_3}.
\end{equation}

We will now estimate:
$$K:=\mathop{\sum_{N_1,N_2,N_3}}_{N_1 \geq N_2 \geq N_3} K_{N_1,N_2,N_3}.$$
In order to do this, we will estimate each term $K_{N_1,N_2,N_3}$ by considering the different cases determined by the relative sizes of $N_1$ and $N_2$.

\vspace{5mm}

\textbf{Case 1: $N_1 \gg N_2$}

\vspace{5mm}

In this case, we will estimate $K_{N_1,N_2,N_3}$ by using duality. Namely, let use take $u \in L^2(I \times \Lambda)$ such that $\|u\|_{L^2(I \times \Lambda)}=1$ and let us note that:

$$\int_{I} \int_{\Lambda} u \cdot (P_{N_1} \psi_1 \cdot P_{N_2} \psi_2 \cdot P_{N_3} \psi_3) \,dx\,dt=
\int_{I} \int_{\Lambda} P_{N_0}u \cdot (P_{N_1} \psi_1 \cdot P_{N_2} \psi_2 \cdot P_{N_3} \psi_3) \,dx\,dt,$$
for some $N_0 \sim N_1$. More precisely, $N_0$ here denotes an appropriate dilation of the dyadic integer $N_1$ and we extend the definition of $P_{N_0}$ accordingly.

Consequently, we need to estimate the following sum:

$$K^1:=\mathop{\sum_{N_0,N_1,N_2,N_3}}_{N_0 \sim N_1, N_1 \gg N_2 \geq N_3} N_1^s \cdot \big|\int_{I} \int_{\Lambda} P_{N_0}u \cdot (P_{N_1} \psi_1 \cdot P_{N_2} \psi_2 \cdot P_{N_3} \psi_3)\,dx\,dt\big|$$
$$\leq \mathop{\sum_{N_0,N_1,N_2,N_3}}_{N_0 \sim N_1, N_1 \gg N_2 \geq N_3} N_1^s \cdot \|P_{N_0}u\|_{L^2(I \times \Lambda)} \cdot \|P_{N_1} \psi_1 \cdot P_{N_2} \psi_2 \cdot P_{N_3} \psi_3 \|_{L^2(I \times \Lambda)}$$
\begin{equation}
\label{Njs}
\lesssim_{|I|} \mathop{\sum_{N_0,N_1,N_2,N_3}}_{N_0 \sim N_1, N_1 \gg N_2 \geq N_3} \|P_{N_0}u\|_{L^2(I \times \Lambda)} \cdot N_1^s \cdot N_2 \cdot N_3 \cdot \max \big\{\frac{N_3}{N_1},\frac{1}{N_2}\big\}^{\delta} \cdot 
\end{equation}
$$\cdot \|P_{N_1}f_1\|_{L^2(\Lambda)} \cdot \|P_{N_2}f_2\|_{L^2(\Lambda)} \cdot \|P_{N_3}f_3\|_{L^2(\Lambda)}.$$
Here, we used the Cauchy-Schwarz inequality on $I \times \Lambda$, \eqref{psi_j} and Proposition \ref{Star}. By dyadic localization, the expression in  \eqref{Njs} is:
$$\lesssim \mathop{\sum_{N_0,N_1,N_2,N_3}}_{N_0 \sim N_1, N_1 \gg N_2 \geq N_3} \|P_{N_0}u\|_{L^2(I \times \Lambda)} \cdot \max \big\{\frac{N_3}{N_1},\frac{1}{N_2}\big\}^{\delta} \cdot \|P_{N_1}f_1\|_{H^s(\Lambda)} \cdot \|P_{N_2}f_2\|_{H^1(\Lambda)} \cdot \|P_{N_3}f_3\|_{H^1(\Lambda)}$$
$$\lesssim \mathop{\sum_{N_0,N_1,N_2,N_3}}_{N_0 \sim N_1, N_1 \gg N_2 \geq N_3} \|P_{N_0}u\|_{L^2(I \times \Lambda)} \cdot \Big\{\Big(\frac{N_3}{N_1}\Big)^{\frac{\delta}{2}} \cdot \Big(\frac{N_2}{N_1}\Big)^{\frac{\delta}{2}}+\frac{1}{N_2^{\frac{\delta}{2}} \cdot N_3^{\frac{\delta}{2}}}\Big\} \cdot$$
$$\cdot \|P_{N_1}f_1\|_{H^s(I)} \cdot \|P_{N_2}f_2\|_{H^1(\Lambda)} \cdot \|P_{N_3}f_3\|_{H^1(\Lambda)}.$$
The latter estimate follows from the fact that $N_1 \geq N_2 \geq N_3$.
We use the Cauchy-Schwarz inequality in $N_2$ and in $N_3$ to deduce that this quantity is:
$$\lesssim \mathop{\sum_{N_0,N_1}}_{N_0 \sim N_1}
\|P_{N_0}u\|_{L^2(I \times \Lambda)} \cdot \|P_{N_1}f_1\|_{H^s(\Lambda)} \cdot \|f_2\|_{H^1(\Lambda)} \cdot \|f_3\|_{H^1(\Lambda)}$$
which by using the Cauchy-Schwarz inequality in $N_0 \sim N_1$ is:
$$\lesssim \|u\|_{L^2(I \times \Lambda)} \cdot \|f_1\|_{H^s(\Lambda)} \cdot \|f_2\|_{H^1(\Lambda)} \cdot \|f_3\|_{H^1(\Lambda)}=\|f_1\|_{H^s(\Lambda)} \cdot \|f_2\|_{H^1(\Lambda)} \cdot \|f_3\|_{H^1(\Lambda)},$$
since $\|u\|_{L^2(I \times \Lambda)}=1$ by assumption.
In particular:
\begin{equation}
\label{K1A}
K^1 \lesssim \|f_1\|_{H^s(\Lambda)} \cdot \|f_2\|_{H^1(\Lambda)} \cdot \|f_3\|_{H^1(\Lambda)}.
\end{equation}

%$$\mathop{\sum_{N_0,N_1,N_2,N_3}}_{m \neq 0} \left[ 2^{\min\{j_2,j_3\}} + \frac{2^{2 \min \{j_2,j_3\}}}{|m_*|} \right]$$

Since $s \in [0,1]$, we know that $N_1^s \cdot N_2 \cdot N_3 \leq N_1 \cdot N_2^s \cdot N_3$, and hence we can replace the factor of $N_1^s \cdot N_2 \cdot N_3$ in \eqref{Njs} by $N_1 \cdot N_2^s \cdot N_3$. By the above argument, it follows that:

\begin{equation}
\label{K1B}
K^1 \lesssim \|f_1\|_{H^1(\Lambda)} \cdot \|f_2\|_{H^s(\Lambda)} \cdot \|f_3\|_{H^1(\Lambda)}.
\end{equation}

Similarly, since $N_1^s \cdot N_2 \cdot N_3 \leq N_1 \cdot N_2 \cdot N_3^s$, it follows that:

\begin{equation}
\label{K1C}
K^1 \lesssim \|f_1\|_{H^1(\Lambda)} \cdot \|f_2\|_{H^1(\Lambda)} \cdot \|f_3\|_{H^s(\Lambda)}.
\end{equation}

We use duality and \eqref{K1A}, \eqref{K1B}, and \eqref{K1C} in order to deduce that:

\begin{equation}
\label{Case1bound}
\mathop{\sum_{N_1,N_2,N_3}}_{N_1 \gg N_2 \geq N_3} K_{N_1,N_2,N_3} \lesssim
\end{equation}
\begin{equation}
\notag
\min\{ \|f_1\|_{H^s} \cdot \|f_2\|_{H^1} \cdot \|f_3\|_{H^1},\|f_1\|_{H^1} \cdot \|f_2\|_{H^s} \cdot \|f_3\|_{H^1}, \|f_1\|_{H^1} \cdot \|f_2\|_{H^1} \cdot \|f_3\|_{H^s}\}.
\end{equation}
The implied constant depends on $s$ and the length of $I$ by construction. 

\vspace{5mm}

\textbf{Case 2: $N_1 \sim N_2$}

\vspace{5mm}

In this case, we will estimate:

$$K^{2}:=\mathop{\sum_{N_1,N_2,N_3}}_{N_1 \geq N_2 \geq N_3;\, N_1 \sim N_2} K_{N_1,N_2,N_3}=\mathop{\sum_{N_1,N_2,N_3}}_{N_1 \geq N_2 \geq N_3;\, N_1 \sim N_2} N_1^s \cdot \|P_{N_1}\psi_1 \cdot P_{N_2} \psi_2 \cdot P_{N_3} \psi_3\|_{L^2(I \times \Lambda)}.$$
By \eqref{psi_j} and Proposition \ref{Star}, this expression is:
\begin{equation}
\label{Njs2}
\lesssim_{|I|} \mathop{\sum_{N_1,N_2,N_3}}_{N_1 \geq N_2 \geq N_3;\, N_1 \sim N_2} N_1^s \cdot N_2 \cdot N_3 \cdot \max\{\frac{N_3}{N_1},\frac{1}{N_2}\}^{\delta} \cdot
\end{equation}
$$\cdot \|P_{N_1}f_1\|_{L^2(\Lambda)} \cdot \|P_{N_2}f_2\|_{L^2(\Lambda)} \cdot \|P_{N_3}f_3\|_{L^2(\Lambda)}.$$
Since $N_1 \sim N_2$ and $N_3 \geq 1$, this is:
$$\lesssim \mathop{\sum_{N_1,N_2,N_3}}_{N_1 \geq N_2 \geq N_3;\, N_1 \sim N_2} N_1^s \cdot N_2 \cdot N_3 \cdot \Big(\frac{N_3}{N_1}\Big)^{\delta} \cdot \|P_{N_1}f_1\|_{L^2(\Lambda)} \cdot \|P_{N_2}f_2\|_{L^2(\Lambda)} \cdot \|P_{N_3}f_3\|_{L^2(\Lambda)}$$
$$\lesssim \mathop{\sum_{N_1,N_2,N_3}}_{N_1 \geq N_2 \geq N_3;\, N_1 \sim N_2} \Big(\frac{N_3}{N_1}\Big)^{\delta} \cdot \|P_{N_1}f_1\|_{H^s(\Lambda)} \cdot \|P_{N_2}f_2\|_{H^1(\Lambda)} \cdot \|P_{N_3}f_3\|_{H^1(\Lambda)}.$$
By using the Cauchy-Schwarz inequality in $N_3$, we can bound this sum by:
$$\lesssim \mathop{\sum_{N_1,N_2}}_{N_1 \sim N_2} \|P_{N_1}f_1\|_{H^s(\Lambda)} \cdot \|P_{N_2}f_2\|_{H^1(\Lambda)} \cdot \|f_3\|_{H^1(\Lambda)}.$$
By using the Cauchy-Schwarz inequality in $N_1 \sim N_2$, this sum is:
\begin{equation}
\notag
\lesssim \|f_1\|_{H^s(\Lambda)} \cdot \|f_2\|_{H^1(\Lambda)} \cdot \|f_3\|_{H^1(\Lambda)}.
\end{equation}

As in Case 1, we can replace $N_1^s \cdot N_2 \cdot N_3$ by $N_1 \cdot N_2^s \cdot N_3$ and $N_1 \cdot N_2 \cdot N_3^s$ in \eqref{Njs2} and we can hence deduce that:

\begin{equation}
\label{Case2bound}
K^2=\mathop{\sum_{N_1,N_2,N_3}}_{N_1 \geq N_2 \geq N_3;\,N_1 \sim N_2} K_{N_1,N_2,N_3} \lesssim
\end{equation}
\begin{equation}
\notag
\min\{ \|f_1\|_{H^s} \cdot \|f_2\|_{H^1} \cdot \|f_3\|_{H^1},\|f_1\|_{H^1} \cdot \|f_2\|_{H^s} \cdot \|f_3\|_{H^1}, \|f_1\|_{H^1} \cdot \|f_2\|_{H^1} \cdot \|f_3\|_{H^s}\}.
\end{equation}
The implied constant depends on $s$ and the length of $I$ by construction.

The proposition now follows from the estimates \eqref{Case1bound} and \eqref{Case2bound}.

\end{proof}

\section{An unconditional uniqueness result}
\label{An unconditional uniqueness result}

In this section, we will use a version of the Quantum de Finetti Theorem in order to obtain an unconditional uniqueness result for the Gross-Pitaevskii hierarchy on $\mathbb{T}^3$. The unconditional uniqueness result will hold in a class of density matrices with regularity $\alpha = 1$, and hence for higher regularities. In particular, we will prove an analogue of the non-periodic unconditional uniqueness result proved by T. Chen, Hainzl, Pavlovi\'{c}, and Seiringer \cite{ChHaPavSei}.

%We will later see in Section \ref{derivation of defocusing cubic NLS on T3} that the unconditional uniqueness result of this section is enough to obtain the final uniqueness step in the rigorous derivation of the defocusing cubic NLS. Let us recall that the all the steps besides the uniqueness step were previously obtained in the work of Elgart, Erd\H{o}s, Schlein, and Yau \cite{EESY}, with previous work done by Erd\H{o}s, Schlein, and Yau \cite{ESY1}.
\subsection{The Weak Quantum de Finetti Theorem}
\label{The Weak Quantum de Finetti Theorem}

Let $\mathcal{H}$ be a separable Hilbert space. We denote by $\mathcal{H}^k_{sym}:=\otimes_{sym}^k \,\mathcal{H}$ the associated bosonic $k$-particle space. More precisely, this is the space obtained from the quotient under the action of the symmetric group $S_k$ on
$\underbrace{\mathcal{H} \times \mathcal{H} \times \cdots \times \mathcal{H}}_\text{$k$ times}$ by $\sigma \cdot (h_1,h_2,\ldots,h_k):=(h_{\sigma(1)},h_{\sigma(2)},\ldots,h_{\sigma(k)}).$

%\underbrace{(x+2)^3}_\text{text 1}

We will use the following variant of the Quantum de Finetti Theorem, which is called the \emph{Weak Quantum de Finetti Theorem}:

\begin{theorem}{(Weak Quantum de Finetti \cite{AmmariNier1,AmmariNier2,LewinNamRougerie})}
\label{Weak Quantum de Finetti}

Suppose that the sequence $(\Gamma_N)_N$ satisfies the following assumptions for all $N \in \mathbb{N}$:

\begin{itemize}
\item[i)]  $\Gamma_N$ is a bounded self-adjoint operator on $\mathcal{H}^N_{sym}$.
\item[ii)] $\Gamma_N \geq 0$ as an operator on $\mathcal{H}^N_{sym}$.
\item[iii)] $Tr_{\mathcal{H}^N_{sym}} \, \Gamma_N =1$.
\end{itemize}
Moreover, suppose that for all $k \in \mathbb{N}$, the corresponding sequence of $k$-particle marginals $\gamma_N^{(k)}:=Tr_{k+1,\ldots,N}\, \Gamma_N$ converges to $\gamma^{(k)} \in \mathcal{H}^k_{sym}$ as $N \rightarrow \infty$ in the weak-$*$ topology of the trace class, i.e.
$\gamma_N^{(k)} \rightharpoonup^{*} \gamma^{(k)}$ in the trace class on $\mathcal{H}^k_{sym}$.

Under these assumptions, there exists a unique Borel probability measure supported on the unit ball $\mathcal{B}$ of $\mathcal{H}$, which is invariant under multiplication by complex numbers of modulus one such that, for all $k \in \mathbb{N}$:

\begin{equation}
\label{Quantum_de_Finetti_Star}
\gamma^{(k)}=\int_{\mathcal{B}} \big(|\phi \rangle \langle \phi|^{\otimes k} \big) \,d\mu(\phi).
\end{equation}
\end{theorem}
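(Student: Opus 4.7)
The plan is to reduce the statement to the strong Hudson-Moody quantum de Finetti theorem in finite dimensions via projection onto finite-dimensional subspaces, and then to run a weak-$*$ limit argument that accommodates the loss of mass inherent in passing from strong to weak-$*$ convergence of trace-class marginals.

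First, I would fix an orthonormal basis $(e_j)_{j\in\mathbb{N}}$ of $\mathcal{H}$ and let $P_K$ denote the orthogonal projection onto $\mathcal{H}_K:=\mathrm{span}(e_1,\ldots,e_K)$. For each $N$ and $K$, form the compressed $N$-body state $\Gamma_{N,K}:=P_K^{\otimes N}\Gamma_N P_K^{\otimes N}$, whose $k$-particle marginals are $P_K^{\otimes k}\gamma_N^{(k)}P_K^{\otimes k}$. In finite dimensions a quantitative Hudson-Moody-type bound (for instance the one obtained in the proof of the strong quantum de Finetti theorem by Lewin, Nam, and Rougerie in \cite{LewinNamRougerie}) yields a Borel probability measure $\mu_{N,K}$ on the unit sphere of $\mathcal{H}_K$ such that the trace-norm discrepancy between $P_K^{\otimes k}\gamma_N^{(k)}P_K^{\otimes k}/Tr\,\Gamma_{N,K}$ and $\int |\phi\rangle\langle\phi|^{\otimes k}\,d\mu_{N,K}(\phi)$ is of order $kK/N$, hence negligible as $N\to\infty$.

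Next, I would pass to the limit in two stages. For fixed $K$ the unit ball of $\mathcal{H}_K$ is norm-compact, so I extract a weak limit $\mu_K$ of $(\mu_{N,K})_N$ along the given subsequence; by construction this measure represents $P_K^{\otimes k}\gamma^{(k)}P_K^{\otimes k}$. To remove the cutoff I view each $\mu_K$ as a probability measure on $\mathcal{B}$, which is metrizable and compact in the weak topology of $\mathcal{H}$. Prokhorov's theorem then delivers a subsequential weak limit $\mu$ on $\mathcal{B}$. Testing \eqref{Quantum_de_Finetti_Star} against pairs $(\Psi,\Psi')$ of finite tensor products of basis elements follows by passing to the limit in the cutoff identities, since such matrix elements are polynomials in $\phi$ continuous in the weak topology on $\mathcal{B}$. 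The $U(1)$-invariance is then obtained by averaging $\mu$ over the action $\phi\mapsto e^{i\theta}\phi$, which does not affect \eqref{Quantum_de_Finetti_Star} because the factorized states $|\phi\rangle\langle\phi|^{\otimes k}$ are phase-invariant.

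Uniqueness will follow from a moment argument: a $U(1)$-invariant Borel probability measure on the compact space $\mathcal{B}$ is determined by the integrals of the monomials $\phi\mapsto\prod_{j=1}^k\langle\psi_j,\phi\rangle\,\overline{\langle\psi'_j,\phi\rangle}$, and these are precisely the matrix elements of $\gamma^{(k)}$; a Stone-Weierstrass argument on the quotient $\mathcal{B}/U(1)$ equipped with the weak topology shows that the algebra generated by such monomials is dense in the continuous $U(1)$-invariant functions on $\mathcal{B}$. The main obstacle is exactly the loss of mass: since only weak-$*$ convergence of marginals is assumed, $Tr\,\gamma^{(k)}$ may be strictly less than one, so the limit measure must be allowed to charge the interior of $\mathcal{B}$, and all tightness and continuity arguments must be phrased in the weak topology of $\mathcal{H}$ rather than the norm topology in which the unit ball fails to be compact.
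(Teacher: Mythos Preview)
The paper does not supply its own proof of this theorem. Immediately after the statement it writes: ``Theorem \ref{Weak Quantum de Finetti} is proved in \cite[Theorem 2.2]{LewinNamRougerie} and is based on equivalent results proved in \cite{AmmariNier1} and \cite{AmmariNier2}, where $\mu$ is called a Wigner measure.'' The result is quoted as a black box and used later in Section \ref{An unconditional uniqueness result}; there is nothing in the paper to compare your argument against.

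That said, the strategy you outline is essentially the one carried out in \cite{LewinNamRougerie}: compress to finite-dimensional $\mathcal{H}_K$, invoke a quantitative finite-dimensional de Finetti estimate to produce measures $\mu_{N,K}$, let $N\to\infty$ and then $K\to\infty$ via weak compactness on the weakly compact ball $\mathcal{B}$, and recover uniqueness from a Stone--Weierstrass argument on $U(1)$-invariant polynomials. Two points would need care in a full write-up. First, the passage from $\mu_K$ to $\mu$ is not just a Prokhorov limit: one must check that the family $(\mu_K)_K$ is consistent under the projections $P_K$, which in \cite{LewinNamRougerie} is handled via a geometric localization identity relating $P_K^{\otimes k}\gamma^{(k)}P_K^{\otimes k}$ to the pushforward of $\mu$ under $\phi\mapsto P_K\phi$; without this, a diagonal subsequence in $K$ need not converge to a measure representing all the $\gamma^{(k)}$ simultaneously. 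Second, normalizing by $Tr\,\Gamma_{N,K}$ is delicate because this trace can be small; the cleaner route is to work with sub-probability measures throughout and only identify the total mass at the end.
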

Theorem \ref{Weak Quantum de Finetti} is proved in \cite[Theorem 2.2]{LewinNamRougerie} and is based on equivalent results proved in \cite{AmmariNier1} and \cite{AmmariNier2}, where $\mu$ is called an \emph{Wigner measure}. The connection between Wigner measures and de Finetti measures is explained in \cite[Section 6.4]{Ammari}. Before stating our unconditional uniqueness result, let us make a few remarks concerning the general Weak Quantum de Finetti Theorem:

\begin{remark}
\label{Quantum_de_Finetti_Remark1}
By the invariance of $\mu$ under multiplication by complex numbers of modulus one, we mean that for all $z \in \mathbb{C}$ with $|z|=1$ and for all $\phi \in \mathcal{H}$, it is the case that $(z\cdot \mu)(\phi):=\mu(z \cdot \phi)=\mu(\phi)$.
\end{remark}

\begin{remark}
\label{Quantum_de_Finetti_Remark3}
There is also a Strong Quantum de Finetti Theorem. This is the original result obtained in the work of Hudson and Moody \cite{HudsonMoody}, and St\o rmer \cite{Stormer} in the context of $C^{*}$ algebras. In the context of density matrices results, analogues of this result were also obtained in the work of Ammari and Nier \cite{AmmariNier1,AmmariNier2}, and Lewin, Nam, and Rougerie \cite{LewinNamRougerie}. In the strong version, the density matrices $\gamma^{(k)} \in \mathcal{H}^{k}$ are not assumed to be weak-$*$ limits. Instead, they are assumed to have the property that $Tr_{\mathcal{H}^k}\,\gamma^{(k)}=1$ and that they satisfy the admissibility property $\gamma^{(k)}=Tr_{k+1}\,\gamma^{(k+1)}.$ In this case, it is possible to prove \eqref{Quantum_de_Finetti_Star} with the measure $\mu$ supported on the unit sphere $\mathcal{S}$ of $\mathcal{H}$. This strong version can be used to prove an unconditional uniqueness result with the corresponding assumptions, as was done in \cite{ChHaPavSei,HTX}. For our applications in Section \ref{derivation of defocusing cubic NLS on T3}, we will use the unconditional uniqueness result obtained from the Weak Quantum de Finetti Theorem in Theorem \ref{Unconditional_uniqueness} below.
\end{remark}

\subsection{An unconditional uniqueness result}
\label{An unconditional uniqueness result}

We henceforth consider the \emph{Gross-Pitaevskii hierarchy on $\Lambda=\mathbb{T}^3$:}

\begin{equation}
\label{GPhierarchy}
\begin{cases}
i \partial_t \gamma^{(k)} + \big(\Delta_{\vec{x}_k}-\Delta_{\vec{x}'_k}\big) \gamma^{(k)}=\lambda \cdot \sum_{j=1}^{k} B_{j,k+1}\big(\gamma^{(k+1)}\big) \\
\gamma^{(k)}\big|_{t=0}=\gamma_0^{(k)}. 
\end{cases}
\end{equation}
We will assume that $\lambda \in \{1,-1\}$.
The results of this section will apply to both the \emph{defocusing case} $\lambda=1$ and the \emph{focusing case} $\lambda=-1$. In Section \ref{derivation of defocusing cubic NLS on T3}, we will restrict our attention to the defocusing case.

Following \cite{ChHaPavSei}, we will use the spaces $\mathfrak{H}^{\alpha}$:

\begin{definition}
\label{mathfrakHalpha}
Given $\alpha \geq 0$, $\mathfrak{H}^{\alpha}$ denotes the set of all sequences $(\gamma^{(k)})_k$ such that, for each $k$:
\begin{itemize}
\item[i)]  $\gamma^{(k)} \in L^2_{sym}(\Lambda^k \times \Lambda^k)$ and $\gamma(\vec{x}_k,\vec{x}'_k)=\overline{\gamma^{(k)}(\vec{x}'_k;\vec{x}_k)}$ for all $(\vec{x}_k,\vec{x}'_k)$ in $\Lambda^k \times \Lambda^k$.
\item[ii)] $S^{(k,\alpha)}\gamma^{(k)} \in \mathcal{L}^1_k$.
\item[iii)] There exists $M>0$, independent of $k$, for which $Tr\big(|S^{(k,\alpha)}\gamma^{(k)}|\big) \leq M^{2k}$.
\end{itemize}
\end{definition}
%In other words:
%$$\mathfrak{H}^{\alpha}:=\big\{(\sigma^{(k)})_k; \,Tr\big(|S^{(k,\alpha)}\gamma^{(k)}|\big) \leq M^{2k}\big\}$$

Variants of such spaces were used previously in the work of Erd\H{o}s, Schlein, and Yau \cite{ESY1,ESY2,ESY3,ESY4,ESY5} and in related works. A different class of spaces, based on a Hilbert-Schmidt norm, was used in the work of Klainerman and Machedon \cite{KM}. 

We adopt the terminology from \cite{ChHaPavSei}:

\begin{definition}
\label{Mild_solution}
Given $T>0$, we say that $(\gamma^{(k)})_k=(\gamma^{(k)}(t))_k$ is a mild solution to the Gross-Pitaevskii hierarchy \eqref{GPhierarchy} in $L^{\infty}_{t \in [0,T]} \mathfrak{H}^{\alpha}$ if, for all $k \in \mathbb{N}$:
\begin{itemize}
\item[i)] $(\gamma^{(k)}(0))_k=(\gamma_0^{(k)})_k \in \mathfrak{H}^{\alpha}$.
\item[ii)] For all $t \in [0,T]$, the following integral equation is valid:
$$\gamma^{(k)}(t)=\mathcal{U}^{(k)}(t)-i\lambda \int_{0}^{t} \,\mathcal{U}^{(k)}(t-s)B_{k+1} \gamma^{(k+1)}(s)\,ds.$$
\item[iii)] $$\mathop{\sup}_{\,t \in [0,T]} Tr \big(|S^{(k,\alpha)} \gamma^{(k)}(t)|\big) \leq M^{2k}$$ for some $M>0$ which is independent of $k$ and $t$.
\end{itemize}
For point ii), we recall the definition of the full collision operator $B_{k+1}$ in \eqref{Bk+1_sum}.
\end{definition}

In what follows, let $S_t$ denote the flow map of the cubic nonlinear Schr\"{o}dinger equation:

\begin{equation}
\label{NLS_T3}
\begin{cases}
iu_t+\Delta u = \lambda \cdot |u|^2u\\
u \,\big|_{t=0}=\phi.
\end{cases}
\end{equation}
More precisely, we let $S_t(\phi)(x):=u(x,t)$.
We note that the Cauchy theory of this problem is well-understood \cite{B} and hence the map $S_t$ is well-defined.

\vspace{5mm}

With these definitions in mind, we will prove:
\begin{theorem}{(An unconditional uniqueness result)}
\label{Unconditional_uniqueness}

Let $(\gamma_0^{(k)})_k \in \mathfrak{H}^1$. Suppose that $(\gamma^{(k)}(t))_k$ is a mild solution to the Gross-Pitaevskii hierarchy \eqref{GPhierarchy} in $L^{\infty}_{t \in [0,T]} \mathfrak{H}^1$ with initial data $(\gamma^{(k)}(0))_k=(\gamma_0^{(k)})_k$ such that, for each $t \in [0,T]$, $\gamma^{(k)}(t)$ satisfies the assumptions of Theorem \ref{Weak Quantum de Finetti} with $\mathcal{H}=L^2(\Lambda)$. More precisely, we assume that there exist $\Gamma_{N,t} \in L^2_{sym}(\Lambda^N \times \Lambda^N)$ as in Theorem \ref{Weak Quantum de Finetti} such that $\gamma_{N,t}^{(k)}:=Tr_{k+1,\ldots,N}\, \Gamma_{N,t}$ satisfies $\gamma_{N,t}^{(k)} \rightharpoonup^{*} \gamma^{(k)}(t)$ as $N \rightarrow \infty$ in $\mathcal{L}^1_k$. Then, the conclusion is that $(\gamma^{(k)}(t))_k$ is uniquely determined by the initial data $(\gamma_0^{(k)})_k$.

Furthermore, suppose that $(\gamma_0^{(k)})_k$ satisfies, for all $k \in \mathbb{N}$:
\begin{equation}
\label{initial_data}
\gamma_0^{(k)}=\int_{\mathcal{B}}  \big(|\phi \rangle \langle \phi|^{\otimes k}\big)  \,d\mu(\phi)
\end{equation}
for $\mu$ a Borel probability measure which is invariant under multiplication by complex numbers of modulus one and which is supported on the unit ball $\mathcal{B}$ of $L^2(\Lambda)$. Then, for all $k \in \mathbb{N}$ and for all $t \in [0,T]$, it is the case that:
\begin{equation}
\label{gammak_formula}
\gamma^{(k)}(t)=\int_{\mathcal{B}} \big(|S_t(\phi) \rangle \langle S_t(\phi)|^{\otimes k}\big) \,d\mu(\phi).
\end{equation}
\end{theorem}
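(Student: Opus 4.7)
\emph{Proof proposal.} The plan is to follow the Quantum de Finetti based approach of T.~Chen, Hainzl, Pavlovi\'{c} and Seiringer \cite{ChHaPavSei}, using the periodic multilinear estimate of Proposition \ref{Multilinear_estimate} in place of the $\mathbb{R}^3$ Strichartz input. First I would apply Theorem \ref{Weak Quantum de Finetti} at each time $t\in[0,T]$ to produce a Borel probability measure $\mu_t$ on the unit ball $\mathcal{B}$ of $L^2(\Lambda)$, invariant under multiplication by unit-modulus complex scalars, such that
\begin{equation*}
\gamma^{(k)}(t)=\int_{\mathcal{B}}|\phi\rangle\langle\phi|^{\otimes k}\,d\mu_t(\phi)
\end{equation*}
for every $k\in\mathbb{N}$. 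The uniqueness assertion then reduces to showing that $\mu_t$ is determined by the initial data $(\gamma_0^{(k)})_k$, and the representation formula \eqref{gammak_formula} amounts to identifying $\mu_t$ with the pushforward of $\mu$ under $S_t$ when \eqref{initial_data} holds.

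Next I would iterate the Duhamel identity from Definition \ref{Mild_solution} ii) $r$ times, obtaining a decomposition
\begin{equation*}
\gamma^{(k)}(t)=\sum_{j=0}^{r-1}(-i\lambda)^{j}\,J^{(k)}_{j}(t)\bigl[\gamma_0^{(k+j)}\bigr]+R^{(k)}_{r}(t),
\end{equation*}
where $J^{(k)}_{j}(t)$ is the standard $j$-fold nested spacetime integral of free propagators interlaced with $B_{k+1},\ldots,B_{k+j}$, and $R^{(k)}_{r}(t)$ is the analogous term of order $r$ in which $\gamma^{(k+r)}(s)$ replaces a free-evolved initial datum in the innermost slot. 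I would then apply the Klainerman--Machedon boardgame argument to reduce the permutation sum inside each $J^{(k)}_{j}(t)$ to a collection of at most $C^{j}$ distinguished ordered-tree contributions.

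The crux of the argument is a uniform-in-$r$ estimate on each tree contribution and on $R^{(k)}_{r}(t)$. Substituting the de Finetti formula for $\gamma^{(k+r)}(s)$ turns the remainder into an average over $\mathcal{B}$ of fully factorized expressions, and each collision factor $B_{j',k+\ell+1}\mathcal{U}^{(k+\ell+1)}(\tau)\,|\phi\rangle\langle\phi|^{\otimes(k+\ell+1)}$ becomes a tensor product of free-evolved copies of $\phi$ and $\bar{\phi}$. Proposition \ref{Multilinear_estimate}, applied fiberwise in $\phi$ and combined with the admissibility bound iii) in Definition \ref{Mild_solution} (which controls $\|\phi\|_{H^1}$ through $\mu_s$), should supply both the $H^1$-level and $L^2$-level bounds used in \cite{ChHaPavSei}. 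Summing over the tree indices and incorporating the boardgame factor $C^{j}$, I expect an estimate of the shape
\begin{equation*}
\bigl\|S^{(k,1)}R^{(k)}_{r}(t)\bigr\|_{\mathcal{L}^1_k}\lesssim (C_0 t)^{r}\,M^{2(k+r)}.
\end{equation*}
On a short interval $[0,T_0]$ with $C_0 T_0 M^{2}<1$, the remainder then vanishes as $r\to\infty$, so the resulting convergent Duhamel series depends only on $(\gamma_0^{(k)})_k$; iterating over successive subintervals of length $T_0$ yields uniqueness on all of $[0,T]$, since the constant $M$ in iii) is uniform in $t$.

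Finally, under the hypothesis \eqref{initial_data} I would verify that the candidate $\widetilde{\gamma}^{(k)}(t):=\int_{\mathcal{B}}|S_t(\phi)\rangle\langle S_t(\phi)|^{\otimes k}\,d\mu(\phi)$ is itself a mild solution of \eqref{GPhierarchy} in $L^{\infty}_{t\in[0,T]}\mathfrak{H}^1$ with the correct initial data that meets the Weak Quantum de Finetti compatibility assumption (take $\Gamma_{N,t}:=\int_{\mathcal{B}}|S_t(\phi)\rangle\langle S_t(\phi)|^{\otimes N}\,d\mu(\phi)$), so the uniqueness statement just proved forces $\gamma^{(k)}(t)=\widetilde{\gamma}^{(k)}(t)$. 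The principal obstacle is the remainder estimate: the $\mathbb{R}^3$ and $\mathbb{T}^2$ versions of the Klainerman--Machedon spacetime estimate are known to fail at $\alpha=1$ on $\mathbb{T}^3$ \cite{GrSoSt}, and the crux is that the de Finetti averaging lets one replace a forbidden multilinear density-matrix estimate by the single-function trilinear bound of Proposition \ref{Multilinear_estimate} applied fiberwise in $\phi$, which is then integrated back against $d\mu_s$ to close the iteration.
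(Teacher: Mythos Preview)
Your proposal is essentially correct and follows the same approach as the paper: Quantum de Finetti representation at each time, $r$-fold Duhamel iteration, the Klainerman--Machedon boardgame reduction, the tree decomposition of \cite{ChHaPavSei}, and Proposition~\ref{Multilinear_estimate} applied fiberwise in $\phi$ to obtain the $L^2$- and $H^1$-level recursive bounds. The second part, identifying the solution with $\int_{\mathcal{B}}|S_t(\phi)\rangle\langle S_t(\phi)|^{\otimes k}\,d\mu(\phi)$ by checking it is a mild solution with the right data and invoking uniqueness, is exactly what the paper does.

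One cosmetic difference: the paper argues by taking the \emph{difference} $\gamma^{(k)}=\gamma_1^{(k)}-\gamma_2^{(k)}$ of two mild solutions with common initial data, so the initial-data terms in the Duhamel expansion cancel identically and one only has to show the $r$-th order term tends to zero (this is \eqref{claim} via \eqref{gamma_k_bound_proof}). You instead keep the full expansion $\sum_{j=0}^{r-1}J_j^{(k)}+R_r^{(k)}$ and argue that $R_r^{(k)}\to 0$ forces the series to converge to something determined by $(\gamma_0^{(k)})_k$. These are equivalent; the paper's version is slightly cleaner because it avoids any discussion of the initial-data terms. Also, the paper bounds only $Tr\,|\gamma^{(k)}(t)|$ rather than the $S^{(k,1)}$-weighted trace norm you target, and the actual gain per iteration is $T^{1/2}$ (so the smallness condition is $C\,C_4\,M\,T^{1/2}<1$) rather than $T$; neither point affects the logic.
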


\begin{remark}
\notag
%\label{initial_data_comment}
Let us note that the a unique measure $\mu$ satisfying \eqref{initial_data} exists by Theorem \ref{Weak Quantum de Finetti}.
\end{remark}

\begin{remark}
We note that, by subdividing the time interval, we can assume without loss of generality that $T \in [0,1]$. 
\end{remark}
The remainder of this section is devoted to the proof of Theorem \ref{Unconditional_uniqueness}. Our method will follow the combinatorial graph expansion from \cite{ChHaPavSei}. The main difference from the non-periodic setting is that we will estimate the terms coming from the \emph{distinguished vertices} by using Proposition \ref{Multilinear_estimate}. More precisely, we will have to give a different proof of the $L^2$ estimate \eqref{Bound_on_the_L2_level} and of the $H^1$ estimate \eqref{Bound_on_the_H1_level} below. Namely, the proof of these estimates in the non-periodic setting \cite[Lemma 7.1]{ChHaPavSei} relies on the use of Strichartz estimates on $\mathbb{R}^3$, which are not available on $\mathbb{T}^3$. We overcome this difficulty by applying Proposition \ref{Multilinear_estimate}.
In addition, in bounding the trilinear term $\||\phi|^2 \phi\|_{L^2_x}$, we need to recall the Sobolev embedding on $\mathbb{T}^3$ in \eqref{Sobolev_embedding_torus}.
For completeness of the exposition, we will recall the main ideas of the graph expansion and how it is used to prove unconditional uniqueness results. For the full details of the construction and the methods, we refer the interested reader to \cite{ChHaPavSei}.

Let us suppose that $(\gamma^{(k)}_1(t))_k$ and $(\gamma^{(k)}_2(t))_k$ are solutions to \eqref{GPhierarchy} with initial data $(\gamma_0^{(k)})_k$ which satisfy the assumptions of the theorem. Let $\gamma^{(k)}:=\gamma_1^{(k)}-\gamma_2^{(k)}$. Our goal is to show that $\gamma^{(k)}$ is identically zero on $[0,T]$ for all $k \in \mathbb{N}$. In particular, it suffices to show that:

\begin{equation}
\label{claim}
Tr \big(|\gamma^{(k)}(t)|\big)=0
\end{equation}
for all $k \in \mathbb{N}$ and for all $t \in [0,T]$.

 %Since $(\gamma_1^{(k)}(t))_k, (\gamma_2^{(k)}(t))_k \in L^{\infty}_{t \in [0,T]} \mathfrak{H}^1$, it follows that $(\gamma^{(k)}(t))_k \in L^{\infty}_{t \in [0,T]} \mathfrak{H}^1$. In other words, there exists $M>0$ such that for all $k \in \mathbb{N}$ and for all $t \in [0,T]$:

%\begin{equation}
%\notag
%Tr \big(|S^{(k,1)}\gamma^{(k)}(t)|\big) \leq M^{2k}.
%\end{equation}

By Theorem \ref{Weak Quantum de Finetti}, there exist unique time-dependent Borel probability measures $\mu^{(1)}_t$ and $\mu^{(2)}_t$, which are invariant under multiplication by complex numbers of modulus one and which are supported on the unit ball $\mathcal{B}$ of $L^2(\Lambda)$ such that for all $k \in \mathbb{N}$ and $t \in [0,T]:$

\begin{equation}
%\label{3a}
\notag
\gamma_1^{(k)}(t)=\int_{\mathcal{B}} \big(|\phi \rangle \langle \phi|^{\otimes k}\big) \,d\mu^{(1)}_t (\phi) 
\end{equation}
and
\begin{equation}
%\label{3b}
\notag
\gamma_2^{(k)}(t)=\int_{\mathcal{B}} \big(|\phi \rangle \langle \phi|^{\otimes k}\big) \,d\mu^{(2)}_t (\phi).
\end{equation}
Hence, we can write:
\begin{equation}
\label{3c}
\gamma^{(k)}(t)=\int_{\mathcal{B}} \big(|\phi \rangle \langle \phi|^{\otimes k}\big) \, d\mu_t (\phi),
\end{equation}
where
\begin{equation}
%\label{mu_t}
\notag
\mu_t:=\mu^{1}_t-\mu^{2}_t.
\end{equation}
is a signed Borel measure on the unit ball of $L^2(\Lambda)$.
The assumption that  $(\gamma^{(k)}_1(t))_k$ and $(\gamma^{(k)}_2(t))_k \in L^{\infty}_{t \in [0,T]} \mathfrak{H}^1$ implies that for $j=1,2$, there exists $M_j>0$, such that for all $k \in \mathbb{N}$ and for all $t \in [0,T]$: 

\begin{equation}
\label{Growth_Bound}
Tr(|\gamma^{(k)}_j(t)|)=\int_{\mathcal{B}} \|\phi\|_{H^1}^{2k} \, d\mu^{(j)}_t (\phi) \leq M_j^k.
\end{equation}
%In particular, there exists $M>0$ such that, for all $k \in \mathbb{N}$ and for all $t \in [0,T]$:

%\begin{equation}
%...
%\end{equation}

Let us note that, by Duhamel expansion:

\begin{equation}
\label{Duhamel_Expansion_r}
\gamma^{(k)}(t)= \int_{0}^{t} \int_{0}^{t_1} \cdots \int_{0}^{t_{r-1}} (-i\lambda)^r \cdot \, J^k(\underline{t}_r) \, d \underline{t}_r,
\end{equation}
where: 
$$\underline{t}_r:=(t_1,t_2,\ldots,t_r)$$ 
and the integrand is given by:
\begin{equation}
\label{Duhamel_integrand}
J^k(\underline{t}_r):=\mathcal{U}^{(k)}\,(t-t_1)\,B_{k+1}\,\,\mathcal{U}^{(k+1)}\,(t_1-t_2)\,B_{k+2}\cdots
\end{equation}
$$\cdots \,\mathcal{U}^{(k+r-1)}(t_{r-1}-t_r) \, B_{k+r} \gamma^{(k+r)}(t_r).
$$
%Here, $B_{\ell+1}=\sum_{j=1}^{\ell} B_{j,\ell+1}$ is the full collision operator from \eqref{Bk+1_sum}.

We can write:
$$J^k(\underline{t}_r)=\sum_{\rho \in \mathcal{M}_{k,r}} J^k(\rho;\underline{t}_r),$$
where:
$$J^k(\rho;\underline{t}_r):=\mathcal{U}^{(k)}(t-t_1)\, B_{\rho(k+1),\,k+1} \, \mathcal{U}^{(k+1)}(t_1-t_2) \,$$
$$B_{\rho(k+2),\,k+2} \cdots \,\mathcal{U}^{(k+r-1)} (t_{r-1}-t_r) \,B_{\rho(k+r),\,k+r} \gamma^{(k+r)}(t_r),$$
for a fixed $\rho:\{k+1,k+2,\ldots,k+r\} \rightarrow \{1,2,\ldots,k+r-1\}$ such that $\rho(j)<j$ for all $j \in \{k+1,k_2,\ldots,k+r\}$. $\mathcal{M}_{k,r}$ denotes the set of all such mappings $\rho$. 

It was noted in the work of Klainerman and Machedon \cite[Section 3]{KM} that, by using a combinatorial \emph{boardgame argument}, one can define an equivalence relation $``\simeq"$ on $\mathcal{M}_{k,r}$ under which every $\rho \in \mathcal{M}_{k,r}$ is equivalent to a unique \emph{upper echelon} element $\sigma$, i.e. to $\sigma \in \mathcal{M}_{k,r}$ which is monotonically increasing. %such that $\sigma(\ell) \geq \sigma (j)$ whenever $\ell \geq j$. 
Let $\mathcal{N}_{k,r}$ denote the set of all upper echelon elements in $\mathcal{M}_{k,r}$. From the analysis in \cite[Section 3]{KM}, one can deduce that: 
$$\# \mathcal{N}_{k,r} \leq 2^{k+r}$$ 
and:
\begin{equation}
\label{gammak_boardgame_argument}
\gamma^{(k)}(t)=\sum_{\rho \in \mathcal{M}_{k,r}} \int_{0}^{t} \int_{0}^{t_1} \cdots \int_{0}^{t_{r-1}} (-i\lambda)^r  \cdot J^k(\rho;\underline{t}_r) \, d \underline{t}_r=
\end{equation}
$$=\sum_{\sigma \in \mathcal{N}_{k,r}} \int_{D(\sigma,t)} (-i\lambda)^r \cdot J^{k} (\sigma;\underline{t}_r) \,d\underline{t}_r$$
for $D(\sigma,t)$ a union of simplices contained in $[0,t]^r$, whose interiors are mutually disjoint. Similar arguments were given in a different context in \cite{ESY1,ESY2,ESY4,ESY5}.

We now use \eqref{3c}, \eqref{Duhamel_integrand}, and \eqref{gammak_boardgame_argument} to write:

\begin{equation}
\label{gamma_k_sum}
\gamma^{(k)}(t)=(-i\lambda)^r  \cdot \sum_{\sigma \in \mathcal{N}_{k,r}} \int_{D(\sigma,t)} \int_{\mathcal{B}}  J^k(\phi;\sigma;t,t_1,\ldots,t_r) \,d\mu_t(\phi)\,d\underline{t}_r
\end{equation}
where, for $\sigma \in \mathcal{N}_{k,r}$ and $\phi \in \mathcal{B}$, we write:

\begin{equation}
\label{Jk}
J^k(\phi;\sigma;t,t_1,\ldots,t_r;\vec{x}_k;\vec{x}'_k):=
\end{equation}
$$\mathcal{U}^{(k)}(t-t_1)\,B_{\sigma(k+1),\,k+1}\,\mathcal{U}^{(k+1)}(t_1-t_2)\cdots \,\mathcal{U}^{(k+r-1)}(t_{r-1}-t_r) \,B_{\sigma(k+r),\,k+r} (|\phi \rangle \langle \phi|^{\otimes(k+r)})(\vec{x}_k;\vec{x}'_k).$$
In \eqref{Jk}, we use the convention as in \cite{ChHaPavSei} that the time variable $t_{\ell}$ is attached to the interaction operator $B_{\sigma(k+\ell),\,k+\ell}$.

\subsubsection{\textbf{A reformulation of the Duhamel expansion following \cite{ChHaPavSei}}}
\medskip
In \cite[equation (4.23)]{ChHaPavSei}, it is noted that $J^k(\phi;\sigma;t,t_1,\ldots,t_r;\vec{x}_k;\vec{x}'_k)$ has the following product structure:
\begin{equation}
\label{product_structure}
J^k(\phi;\sigma;t,t_1,\ldots,t_r;\vec{x}_k;\vec{x}'_k)=\prod_{j=1}^{k} J^1_j (\phi;\sigma_j;t,t_{\ell_{j,1}}, \ldots, t_{\ell_{j,m_j}};x_j;x'_j).
\end{equation}
Here, $\prod$ denotes a tensor product which is invariant under permutation of the factors and each $J^1_j$ is an expression of the same form as in \eqref{Jk}. The interaction operators which appear in $J^1_j$ are labelled ``internally'' with the maps $\sigma_j, j=1, \ldots, k$. Relative to $J^1_j$, $\sigma_j$ is in upper echelon form.
We note that the construction of \cite{ChHaPavSei} is originally given on $\mathbb{R}^3$, but carries over to the periodic setting without any changes.

By using \eqref{gamma_k_sum}, \eqref{product_structure}, and the triangle inequality, it follows that:

\begin{equation}
\label{gamma_k_estimate}
Tr \big(|\gamma^{(k)}(t)|\big) \leq C^{k+r} \cdot \sum_{\ell=1}^{2} \sup_{\sigma \in \mathcal{N}_{k,r}} \int_{[0,t]^r} \int_{\mathcal{B}} \,
\end{equation}
$$\prod_{j=1}^{k} Tr \big(|J^1_j(\phi;\sigma_j;t,t_{\ell_{j,1}},\cdots,t_{\ell{j,m_j}})|\big)\,d\mu^{(\ell)}_{t_r} (\phi) \,d \underline{t}_r.$$

We note that the factor of $C^{k+r}$ for some universal constant $C>0$ comes from the boardgame argument.
\subsubsection{\textbf{An explicit example of the graph expansion}}
\label{Example}

Before proceeding further, let us give an explicit example of the product formula given by \eqref{product_structure}.

\begin{example}
Let us consider the case when $k=3$ and $r=5$.  Suppose that $\sigma(4)=1, \sigma(5)=2, \sigma(6)=3, \sigma(7)=4$, and $\sigma(8)=6$. Then $\sigma \in \mathcal{N}_{3,5}$ and it corresponds to the upper-echelon matrix in the terminology from \cite{KM}:

\begin{equation}
\notag
\begin{bmatrix}
\mathbf{B_{1,4}} & B_{1,5} & B_{1,6} & B_{1,7} & B_{1,8} \\
B_{2,4}  &  \mathbf{B_{2,5}} & B_{2,6} & B_{2,7} & B_{2,8}\\
B_{3,4} & B_{3,5} & \mathbf{B_{3,6}} & B_{3,7} & B_{3,8} \\
0 & B_{4,5} & B_{4,6} & \mathbf{B_{4,7}} & B_{4,8} \\
0 & 0 & B_{5,6} & B_{5,7} & B_{5,8} \\
0 & 0 & 0 & B_{6,7} & \mathbf{B_{6,8}} \\
0 & 0 & 0 & 0 & B_{7,8} \\
\end{bmatrix}
\end{equation}
\end{example}

In particular, we consider:
\begin{equation}
\notag
J^{3}(\phi;\sigma;t,t_1,t_2,t_3,t_4,t_5)= \,\mathcal{U}^{(3)}_{\,0,1} \, B_{1,4} \,\,\mathcal{U}^{(4)}_{\,1,2} \,\, B_{2,5} \, \mathcal{U}^{(5)}_{\,2,3} \, B_{3,6} \,\, \mathcal{U}^{(6)}_{\,3,4} \, B_{4,7} \,\, \mathcal{U}^{(7)}_{\,4,5} \, B_{6,8} \big(|\phi \rangle \langle \phi)^{\otimes k}\big),
\end{equation}
which is henceforth written as $J^3$.
Here, as in \cite{ChHaPavSei}, we abbreviate $\mathcal{U}^{(\ell)}_{\,j,j'}:=\,\mathcal{U}^{(\ell)}(t_j-t_j')$, where $t_0:=t$. Moreover, we write $$|\phi \rangle \langle \phi|^{\otimes k}= \bigotimes_{\ell=1}^{8} u_{\ell}$$ 
in order to distinguish the factors.

Let us first compute:

\begin{equation}
\label{E1}
B_{6,8} (\otimes_{\ell=1}^{8} u_{\ell})=(\otimes_{\ell=1}^{5} u_{\ell}) \otimes \theta_5 \otimes u_7,
\end{equation}
where $\theta_5$ corresponds to the fifth collision from the left in the formula for $J^3$. In other words, we take:

\begin{equation}
\label{E2}
\theta_5:=B_{1,2}(u_6 \otimes u_8).
\end{equation}

Then $B_{4,7} \,\, \mathcal{U}^{(7)}_{\,4,5}$ applied to \eqref{E1} equals:

$$B_{4,7} \,\, \mathcal{U}^{(7)}_{\,4,5} \big( (\otimes_{\ell=1}^{5} u_{\ell}) \otimes \theta_5 \otimes u_7 \big)=$$
\begin{equation}
\label{E3}
=\mathcal{U}^{(3)}_{\,4,5} (\otimes_{\ell=1}^{3} u_{\ell}) \otimes \theta_4 \otimes \mathcal{U}^{(1)}_{\,4,5} \, u_5 \otimes \mathcal{U}^{(1)}_{\,4,5}\, \theta_5,
\end{equation}
where:
\begin{equation}
\label{E4}
\theta_4:=B_{1,2} \,\,\mathcal{U}^{(2)}_{\,4,5}\, (u_4 \otimes u_7).
\end{equation}
$B_{3,6} \,\, \mathcal{U}^{(6)}_{\,3,4}$ applied to \eqref{E3} equals:
$$B_{3,6}\,\,\mathcal{U}^{(6)}_{\,3,4} \big(\,\, \mathcal{U}^{(3)}_{\,4,5} (\otimes_{\ell=1}^{3} u_{\ell}) \otimes \theta_4 \otimes \mathcal{U}^{(1)}_{\,4,5} \, u_5 \otimes \mathcal{U}^{(1)}_{\,4,5}\, \theta_5$$
%which by the \emph{semigroup property} $\mathcal{U}^{(\ell)}_{\,j_1,j_2}=\mathcal{U}^{(\ell)}_%{\,j_1,j_3}\,\mathcal{U}^{(\ell)}_{\,j_3,j_2}$ equals:
%$$B_{3,6} \big(\,\,\mathcal{U}^{(1)}_{\,3,5} \, u_1 \otimes \mathcal{U}^{(1)}_{\,3,5} \,u_2 \otimes \mathcal{U}^{(1)}_{\,3,5} \, u_3 \otimes \mathcal{U}^{(1)}_{\,3,4} \, \theta_4 \otimes \mathcal{U}^{(1)}_{\,3,5} \, u_5 \otimes \mathcal{U}^{(1)}_{\,3,5} \, \theta_5 \big)=$$  
\begin{equation}
\label{E5}
=\mathcal{U}^{(1)}_{\,3,5} \, u_1 \otimes \mathcal{U}^{(1)}_{\,3,5} \,u_2 \otimes \theta_3 \otimes \mathcal{U}^{(1)}_{\,3,4} \, \theta_4 \otimes \mathcal{U}^{(1)}_{\,3,5} \, u_5\big).
\end{equation}
Here:
\begin{equation}
\label{E6}
\theta_3:=B_{1,2} \big(\,\mathcal{U}^{(1)}_{\,3,5} \,u_3 \otimes \mathcal{U}^{(1)}_{\,3,5}\,\theta_5 \big).
\end{equation}
In particular, $B_{2,5} \, \mathcal{U}^{(5)}_{\,2,3}$ applied to \eqref{E5} equals:
$$B_{2,5} \,\, \mathcal{U}^{(5)}_{\,2,3} \big(\,\, \mathcal{U}^{(1)}_{\,3,5} \, u_1 \otimes \mathcal{U}^{(1)}_{\,3,5} \,u_2 \otimes \theta_3 \otimes \mathcal{U}^{(1)}_{\,3,4} \, \theta_4 \otimes \mathcal{U}^{(1)}_{\,3,5} \, u_5 \big)=$$
%$$=B_{2,5} \big(\, \mathcal{U}^{(1)}_{\,2,5} \, u_1 \otimes \mathcal{U}^{(1)}_{\,2,5} \, u_2 \otimes \mathcal{U}^{(1)}_{\,2,3} \, \theta_3 \otimes \mathcal{U}^{(1)}_{\,2,4} \, \theta_4 \otimes \mathcal{U}^{(1)}_{\,2,5} \, u_5 \big)=$$
\begin{equation}
\label{E7}
=\mathcal{U}^{(1)}_{\,2,5} \,u_1 \otimes \theta_2 \otimes \mathcal{U}^{(1)}_{\,2,3} \, \theta_3 \otimes \mathcal{U}^{(1)}_{\,2,4} \, \theta_4,
\end{equation}
where:
\begin{equation}
\label{E8}
\theta_2:=B_{1,2} \big(\, \mathcal{U}^{(1)}_{\,2,5} \,u_2 \otimes \mathcal{U}^{(1)}_{\,2,5}\, u_5\big).
\end{equation}
$B_{1,4}\,\,\mathcal{U}^{(4)}_{\,1,2}$ applied to \eqref{E7} then equals:
$$B_{1,4}\,\,\mathcal{U}^{(4)}_{\,1,2} \,\, \big(\,\,\mathcal{U}^{(1)}_{\,2,5} \,u_1 \otimes \theta_2 \otimes \mathcal{U}^{(1)}_{\,2,3} \, \theta_3 \otimes \mathcal{U}^{(1)}_{\,2,4} \, \theta_4\big)=$$
%$$=B_{1,4} \big(\,\,\mathcal{U}^{(1)}_{\,1,5} \,u_1 \otimes \mathcal{U}^{(1)}_{\,1,2} \, \theta_2 \otimes \mathcal{U}^{(1)}_{\,1,3} \, \theta_3 \otimes \mathcal{U}^{(1)}_{\,1,4} \,\theta_4 \big)=$$
\begin{equation}
\label{E9}
=\theta_1 \otimes \mathcal{U}^{(1)}_{\,1,2} \,\theta_2 \otimes \mathcal{U}^{(1)}_{\,1,3} \,\theta_3,
\end{equation}
where:
\begin{equation}
\label{E10}
\theta_1:=B_{1,2} \big(\,\mathcal{U}^{(1)}_{\,1,5} \,u_1 \otimes \mathcal{U}^{(1)}_{\,1,4}\,\theta_4 \big).
\end{equation}
Finally, we note that $J^3$ equals $\mathcal{U}^{(3)}_{\,0,1}$ applied to \eqref{E9}.

In particular:

$$J^3= \mathcal{U}^{(3)}_{\,0,1} \big(\theta_1 \otimes \mathcal{U}^{(1)}_{\,1,2} \,\theta_2 \otimes \mathcal{U}^{(1)}_{\,1,3} \,\theta_3\big)=$$
\begin{equation}
%\label{example_product}
\notag
=\mathcal{U}^{(1)}_{\,0,1} \, \theta_1 \otimes \mathcal{U}^{(1)}_{\,0,2} \,\theta_2 \otimes \mathcal{U}^{(1)}_{\,0,3} \,\theta_3 =: J^1_1 \otimes J^1_2 \otimes J^1_3.
\end{equation}

Let us explicitly compute the factors above. By using \eqref{E4} and \eqref{E10}, we observe:

%$$J^1_1= \mathcal{U}^{(1)}_{\,0,1} \, \theta_1= \mathcal{U}^{(1)}_{\,0,1} \, B_{1,2} \big(\,
%\mathcal{U}^{(1)}_{\,1,5} \, u_1 \otimes \mathcal{U}^{(1)}_{\,1,4} \,\theta_4 \big)=
%$$
%$$= \mathcal{U}^{(1)}_{\,0,1} \, B_{1,2} \,\, \mathcal{U}^{(2)}_{\,1,4} \big( \, \mathcal{U}^{(1)}_{\,4,5} \, u_1 \otimes \theta_4 \big)= \mathcal{U}^{(1)}_{\,0,1} \, B_{1,2} \,\, \mathcal{U}^{(2)}_{\,1,4} \big( \, \mathcal{U}^{(1)}_{\,4,5} \, u_1 \otimes B_{1,2} \, \mathcal{U}^{(2)}_{\,4,5} \,(u_4 \otimes u_7) \big)=$$
\begin{equation}
%\label{J11}
\notag
J^1_1=\mathcal{U}^{(1)}_{\,0,1} \, B_{1,2} \,\,\mathcal{U}^{(2)}_{\,1,4} \, B_{2,3} \,\, \mathcal{U}^{(3)}_{\,4,5} \, (u_1 \otimes u_4 \otimes u_7).
\end{equation}
From \eqref{E8}, we deduce:
\begin{equation}
%\label{J12}
\notag
J^1_2=\mathcal{U}^{(1)}_{\,0,2} \, \theta_2 = \mathcal{U}^{(1)}_{\,0,2} \, B_{1,2} \,\,\mathcal{U}^{(2)}_{\,2,5}\,(u_2 \otimes u_5).
\end{equation}
Finally, by using \eqref{E2} and \eqref{E6}, it follows that:
%$$J^1_3=\,\mathcal{U}^{(1)}_{\,0,3} \, \theta_3 = \, \mathcal{U}^{(1)}_{\,0,3} \, B_{1,2} \big(\,\mathcal{U}^{(1)}_{\,3,5} u_3 \otimes \mathcal{U}^{(1)}_{\,3,5} \,\theta_5 \big) = \, \mathcal{U}^{(1)}_{\,0,3} \, B_{1,2} \big(\,\mathcal{U}^{(1)}_{\,3,5} u_3 \otimes \mathcal{U}^{(1)}_{\,3,5} \,B_{1,2}(u_6 \otimes u_8) \big)$$
\begin{equation}
%\label{J13}
\notag
J^1_3=\mathcal{U}^{(1)}_{\,0,3} \, B_{1,2} \, \mathcal{U}^{(2)}_{\,3,5} \, B_{2,3} (u_3 \otimes u_6 \otimes u_8).
\end{equation}

Following the terminology of \cite{ChHaPavSei}, we note that $J^1_3$ is the \emph{distinguished factor} in the product, since the collision operator $B_{2,3}$ is not followed by a free evolution. Furthermore as in \cite[Section 5]{ChHaPavSei}, one can associate to the product \eqref{product_structure} a graph. In our example, the graph would be:

\vspace{5mm}

\begin{tikzpicture}[scale=0.2]
%\draw node (w1) at (-1,1.5) {$p_1$};
\draw node (w1) at (-5,5) {$w_1$};
\draw node (w2) at (-5,0) {$w_2$};
\draw node (w3) at (-5,-5) {$w_3$};
\draw node (B14) at (5,5) {$B_{1,4}$};
\draw node (B25) at (30,0) {$B_{2,5}$};
\draw node (B47) at (15,-10) {$B_{4,7}$};
\draw node (B36) at (55,-5) {$B_{3,6}$};
\draw node (B68) at (65,-20) {$B_{6,8}$};
\draw node (u1) at (75,5) {$u_1$};
\draw node (u2) at (75,0) {$u_2$};
\draw node (u3) at (75,-5) {$u_3$};
\draw node (u4) at (75,-10) {$u_4$};
\draw node (u5) at (75,-15) {$u_5$};
\draw node (u6) at (75,-20) {$u_6$};
\draw node (u7) at (75,-25) {$u_7$};
\draw node (u8) at (75,-30) {$u_8$};
\draw (B25)--(u5);
\draw (w1)--(B14);
\draw (B14)--(u1);
\draw (w2)--(B25);
\draw (B25)--(u2);
\draw [ultra thick] (w3)--(B36);
\draw [ultra thick] (B36)--(u3);
\draw (B14)--(B47);
\draw (B47)--(u4);
\draw [ultra thick] (B36)--(B68);
\draw [ultra thick] (B68)--(u6);
\draw (B47)--(u7);
\draw [ultra thick] (B68)--(u8);
\end{tikzpicture}

\vspace{5mm}

In the above graph, the number of subtrees equals the number of distinct factors $k$ in the product \eqref{product_structure}, which in this case is $3$. The roots of these trees are labeled as $w_1,w_2,w_3$. There are $k=5$ internal vertices, which correspond to the collision operators. The number of leaves in the graph equals $k+r=8$, which is the total number of factors of $|\phi \rangle \langle \phi|$ that occur in \eqref{product_structure}. These leaves are labeled as $u_1,u_2,\ldots,u_8$. Each internal node is obtained from a collision operator in the expression for $J^3$ in a way which will be explained more precisely below. The subtree corresponding to the distinguished factor $J^1_3$ is highlighted.

\subsubsection{\textbf{The precise definition of the graph expansion (after \cite{ChHaPavSei})}}
Let us now recall the precise definition of the graph structure following \cite[Section 5]{ChHaPavSei}. One assigns to the expression \eqref{Jk} and the product structure \eqref{product_structure} a union of $k$ disjoint binary tree graphs. As is noted in \cite{ChHaPavSei}, these graphs have already appeared as substructures associated to the graphs in the more involved analysis of \cite{ESY5}. More precisely, one assigns:

\vspace{5mm}

$1)$ An \emph{internal vertex} $v_{\ell}$ associated to each collision operator $B_{\sigma(k+\ell),k+\ell}$, for $\ell=1,2,\ldots,r$. Hence, the time variable $t_{\ell}$ can be thought of as being attached to the vertex $v_{\ell}$.

$2)$ A \emph{root vertex} $w_j$ to each factor $J^1_j$, for $j=1,2,\ldots,k$.

$3)$ A \emph{leaf vertex} $u_i$ to the factor $\big(|\phi \rangle \langle \phi|\big)(x_i;x'_i)$ occurring in $|\phi \rangle \langle \phi|^{\otimes (k+r)}(\vec{x}_{k+r},\vec{x}'_{k+r})$. Here $i=1,2,\ldots,k+r$.

\vspace{5mm}

Having defined the vertices of the graph, one defines the equivalence relation $``\sim"$ of connectivity between vertices. In other words, if two vertices are equivalent under $\sim$, we will draw an edge between them. The equivalence relation $\sim$ on the above defined set of vertices is given as follows:

\vspace{5mm}

$1)$ Suppose that $j \in \{1,2,\ldots,k\}$ is given. If $\ell$  is the \emph{smallest element} of $\{1,2,\ldots,r\}$ such that $\sigma(k+\ell)=j$, then the root vertex $w_j$ is connected to the internal vertex $v_{\ell}$, i.e. these two vertices are equivalent. We say that $w_j$ is the \emph{parent vertex} of $v_{\ell}$ and that $v_{\ell}$ is the \emph{child vertex} of $w_j$.

$2)$ If there exists no internal vertex connected to $w_j$ as above, in other words if $\sigma(k+\ell) \neq j$ for all $\ell \in \{1,2,\ldots,r\}$, then the root vertex $w_j$ is connected to the leaf vertex $u_j$. We say that $w_j$ is the \emph{parent vertex} of $u_j$ and $u_j$ is the \emph{child vertex} of $w_j$.

$3)$ Suppose that $\ell \in \{1,2,\ldots,r\}$. If there exists $\ell' \in \{1,2,\ldots,r\}$ with $\ell'>\ell$ such that $k+\ell=\sigma(k+\ell')$ or $\sigma(k+\ell)=\sigma(k+\ell')$, then we say that $v_{\ell} \sim v_{\ell'}.$
In this case, it is said that $v_{\ell}$ is a \emph{parent vertex} of $v_{\ell'}$ and that $v_{\ell'}$ is a \emph{child vertex} of $v_{\ell}$. The two child vertices of $v_{\ell}$ will be denoted by $v_{\kappa_{-}(\ell)}$ and $v_{\kappa_{+}(\ell)}$, where by convention one takes $\kappa_{-}(\ell) < \kappa_{+}(\ell)$.

$4)$ If there exists no internal vertex $v_{\ell'}$ with $\ell'>\ell$ and $k+\ell=\sigma(k+\ell')$ as above, then $v_{\ell}$ is connected to the leaf vertex $u_{\ell}$. In this case, we say that $v_{\ell}$ is the \emph{parent vertex} of $u_{\ell}$ and that $u_{\ell}$ is a \emph{child vertex} of $v_{\ell}$. Furthermore, if there exists no internal vertex $v_{\ell'}$ such that $\sigma(k+\ell)=\sigma(k+\ell')$, then $v_{\ell}$ is connected to the leaf vertex $u_{\sigma(k+\ell)}$. In this case, we say that $v_{\ell}$ is the \emph{parent vertex} of $u_{\sigma(k+\ell)}$ and that $u_{\sigma(k+\ell)}$ is a \emph{child vertex} of $v_{\ell}$.

\vspace{5mm}

In this way, one obtains a graph which is a disjoint union of $k$ binary trees. These binary trees are denoted by $\tau_j$, for $j=1,2,\ldots,k$. The root of $\tau_j$ is given by $w_j$. We note that the $\sigma_j$ in \eqref{product_structure} can be viewed as $\sigma \big|_{\tau_j}$. In other words, this is the internal labeling of the collision operators that respects the ordering of the vertices of $\tau_j$. Moreover, $\tau_j$ has $m_j$ internal vertices corresponding in this way to time labels $t_{\ell_{j,1}},t_{\ell_{j,2}}, \ldots, t_{\ell_{j,m_j}}$.

The internal vertex $v_r$ is called the \emph{distinguished vertex}. The other internal vertices are called \emph{regular}. The two children vertices corresponding to $v_r$ are called the \emph{distinguished leaves}. All the other leaves are called \emph{regular}. The tree $\tau_j$ is called \emph{distinguished} if $v_r \in \tau_j$ and otherwise it is called \emph{regular}. For more details on this construction, we refer the reader to \cite[Section 5]{ChHaPavSei}.

With this terminology in mind, we can rewrite the graph in our example as:

\vspace{5mm}

\begin{tikzpicture}[scale=0.2]
%\draw node (w1) at (-1,1.5) {$p_1$};
\draw node (w1) at (-5,5) {$w_1$};
\draw node (w2) at (-5,0) {$w_2$};
\draw node (w3) at (-5,-5) {$w_3$};
\draw node (B14) at (5,5) {$v_1$};
\draw node (B25) at (30,0) {$v_2$};
\draw node (B47) at (15,-10) {$v_4$};
\draw node (B36) at (55,-5) {$v_3$};
\draw node (B68) at (65,-20) {$v_5$};
\draw node (u1) at (75,5) {$u_1$};
\draw node (u2) at (75,0) {$u_2$};
\draw node (u3) at (75,-5) {$u_3$};
\draw node (u4) at (75,-10) {$u_4$};
\draw node (u5) at (75,-15) {$u_5$};
\draw node (u6) at (75,-20) {$u_6$};
\draw node (u7) at (75,-25) {$u_7$};
\draw node (u8) at (75,-30) {$u_8$};
\draw (B25)--(u5);
\draw (w1)--(B14);
\draw (B14)--(u1);
\draw (w2)--(B25);
\draw (B25)--(u2);
\draw [ultra thick] (w3)--(B36);
\draw [ultra thick] (B36)--(u3);
\draw (B14)--(B47);
\draw (B47)--(u4);
\draw [ultra thick] (B36)--(B68);
\draw [ultra thick] (B68)--(u6);
\draw (B47)--(u7);
\draw [ultra thick] (B68)--(u8);
\end{tikzpicture}

\vspace{5mm}

Namely, $v_1$ corresponds to $B_{1,4}$, $v_2$ corresponds to $B_{2,5}$, $v_3$ corresponds to $B_{3,6}$, $v_4$ corresponds to $B_{4,7}$, and $v_5$ corresponds to $B_{5,8}$. In this case, $v_5$ is the distinguished vertex. The distinguished tree is the tree with root $w_3$ and it is highlighted.

\subsubsection{\textbf{A detailed analysis of the distinguished tree graph}}

Let us now recall the analysis of the distinguished tree from \cite[Section 6]{ChHaPavSei}. As above, the analysis transfers to the periodic setting. For completeness, we will summarize the notation and the main ideas.

With notation as earlier, we suppose that $\tau_j$ is a distinguished tree. This tree contains $m_j$ internal vertices $(v_{\ell_j,a})_{a=1}^{m_j}$ and $m_j+1$ leaf vertices $(u_{j,s})_{s=1}^{m_j+1}$. The internal vertices are enumerated by $a \in \{1,2,\ldots,m_j\}$, and $a$ corresponds to the label in the collision operator $B_{\sigma_j(a+1),a+1}$. It is advantageous to continue this labeling and to denote the leaf vertices by $a \in \{m_j+1,\ldots,2m_j+1\}$. In other words, the leaf vertex $u_{j,a-m_j}$ corresponds to the label $a \in \{m_j+1,\ldots,2m_j+1\}$.

By the semigroup property of $\mathcal{U}^{(k)}(t)$, it is possible to show that the term in \eqref{product_structure} corresponding to the distinguished tree $\tau_j$ equals:

\begin{equation}
\notag
J^1_j(\phi;\sigma_j;t,t_{\ell_{j,1}},\ldots,t_{\ell_{j,m_j}};x_j;x'_j)=
\end{equation}
$$
=\mathcal{U}^{(1)}(t-t_{\ell_{j,1}})\,B_{1,2} \,\,\mathcal{U}^{(2)}(t_{\ell_{j,1}}-t_{\ell_{j,2}})\,B_{\sigma_j(3),3} \cdots B_{\sigma_j(a),a} \,\,\mathcal{U}^{(a)}(t_{\ell_{j,a-1}}-t_{\ell_{j,a}})\,B_{\sigma_j(a+1),a+1} \cdots
$$
$$\cdots \,\, \mathcal{U}^{(m_j)}(t_{\ell_{j,m_j-1}}-t_{\ell_{j,m_j}}) \,B_{\sigma_j(m_j+1),m_j+1}\,\big(|\phi \rangle \langle \phi|\big)^{\otimes (m_j+1)}.
$$

We note that, by construction, $\ell_{j,m_j}=r$. In other words, the last time at which we apply the Duhamel expansion is always $t_r$. This is because we are considering an $r$-fold Duhamel expansion in \eqref{Duhamel_Expansion_r} and since the distinguished vertex is the last internal vertex with respect to the given labeling.
%This is because the free evolution in the Duhamel expansion is applied up to time $r$.

We now want to bound the integral:

\begin{equation}
\label{IDT}
\int_{[0,T]^{m_j-1}} Tr(|J_j^1(\phi;\sigma_j;t,t_{\ell_{j,1}}, \ldots, t_{\ell_{j,m_j}})| \, d t_{\ell_{j,m_j-1}} \, dt_{\ell_{j,m_j-1}} \cdots dt_{\ell_{j,1}}.
\end{equation}
We note that, in \eqref{IDT}, one is not integrating with respect to $t_{\ell_{j,m_j}}=t_r$, i.e. to the time variable associated to the distinguished vertex. The reason is that one wants to use the integral in the variable $t_{r}$ to take into account the bound \eqref{Growth_Bound}. The details of this step are given in \eqref{gamma_k_bound_proof_1} and \eqref{gamma_k_bound_proof} below.

In order to estimate the expression in \eqref{IDT}, it is necessary to \emph{associate to each collision operator in $J^1_j$ an integral kernel}. In other words, one expands $J^1_j$ keeping in mind the graph structure. This is analogous to the calculations in the Example in Subsection \ref{Example} above. To the vertex labeled by $a$, one associates a kernel $\theta_a$. It can be seen that, at a regular leaf vertex $\theta_a(x;x'):=\phi(x) \cdot \overline{\phi(x')}$. Moreover, at the distinguished vertex, $\theta_{m_j}(x;x'):=\tilde{\psi}(x)\cdot \overline{\phi(x')} -\phi(x) \cdot \overline{\tilde{\psi}(x')},$ for $\tilde{\psi}:=|\phi|^2 \phi$.
%for $\tilde{\psi}:=|\phi|^2 \phi$. 

%\theta_a(x;x'):=\phi(x) \cdot \overline{\phi(x')}

Using an inductive formula, it is shown in \cite[Lemma 6.1]{ChHaPavSei} that for every $a \in \{1,2,\ldots,m_j\}$, the kernel $\theta_a$ can be written as:
\begin{equation}
\label{Kernel}
\theta_a(x;x')=\sum_{\beta_a} c_{\beta_a}^{a} \chi_{\beta_a}^{a}(x) \overline{\psi_{\beta_a}^{a}(x')}
\end{equation}
for at most $2^{m_j-\alpha+1}$ coefficients $c_{\beta_a}^{a} \in \{-1,1\}$.
In \cite[Section 6.4]{ChHaPavSei}, it is noted that the only dependence of $\theta_a$ on $t_{\ell_{j,a}}$ is by means of the propagators $\mathcal{U}_{a;\,\kappa_{+}(a)}:=e^{i(t_{\ell_{j,a}}-t_{\ell_{j,\kappa_{\pm}(a)}})\Delta}$. Moreover, the product $\chi_{\beta_a}^{a}(x) \overline{\psi_{\beta_a}^{a}(x')}$ is always of degree four and it either has the form:
\begin{equation}
\notag
\big(\,\mathcal{U}_{a;\,\kappa_{-}(a)}\, \chi_{\beta_{\kappa_{-}(a)}}^{\kappa_{-}(a)}(x)\big) \cdot \overline{\big(\,\mathcal{U}_{a,\,\kappa_{-}(a)} \,\psi_{\beta_{\kappa_{-}(a)}}^{\kappa_{-}(a)}\big)(x')} \cdot \big(\,\mathcal{U}_{a,\,\kappa_{+}(a)} \,\chi_{\beta_{\kappa_{+}(a)}}^{\kappa_{+}(a)}\big)(x) \cdot \overline{\big(\,\mathcal{U}_{a;\,\kappa_{+}(a)}\,\psi_{\beta_{\kappa_{+}(a)}^{\kappa_{+}(a)}}\big)(x)}
\end{equation}
or
\begin{equation}
\notag
\big(\,\mathcal{U}_{a;\,\kappa_{-}(a)}\, \chi_{\beta_{\kappa_{-}(a)}}^{\kappa_{-}(a)}(x)\big) \cdot \overline{\big(\,\mathcal{U}_{a,\,\kappa_{-}(a)} \,\psi_{\beta_{\kappa_{-}(a)}}^{\kappa_{-}(a)}\big)(x')} \cdot \big(\,\mathcal{U}_{a,\,\kappa_{+}(a)} \,\chi_{\beta_{\kappa_{+}(a)}}^{\kappa_{+}(a)}\big)(x') \cdot \overline{\big(\,\mathcal{U}_{a;\,\kappa_{+}(a)}\,\psi_{\beta_{\kappa_{+}(a)}^{\kappa_{+}(a)}}\big)(x')}.
\end{equation}
These identities %directly follow from \eqref{inductive_formula} and they 
allow one to inductively obtain the factors $\chi_{\beta_a}^{a}$ and $\psi_{\beta_a}^{a}$.

%In particular, to a regular leaf vertex associated to the index $a \in \{m_j+1,m_j+2,\ldots,2m_j+2\}$, the corresponding kernel is:
%$\theta_a(x;x'):=\phi(x) \cdot \overline{\phi(x')}.$$
%At the distinguished vertex $a=m_j$, the kernel is:
%$$\theta_{m_j}(x;x'):=\tilde{\psi}(x)\cdot \overline{\phi(x')} -\phi(x) \cdot \overline{\tilde{\psi}(x')},$$
%for $\tilde{\psi}:=|\phi|^2 \phi$. For a regular internal vertex with label $a \in \{1,2,\ldots,m_j-1\}$, the value of $\theta_a$ is obtained from the inductive formula \cite[equation (6.30)]{ChHaPavSei}:
%\begin{equation}
%\label{inductive_formula}
%\theta_a(x;x')=\big(\,\mathcal{U}^{(1)}(t_a-t_{\kappa_{-}(a)})\theta_{\kappa_{-}(a)}\big)(x;x') \Big[\big(\,\mathcal{U}^{(1)}(t_a-t_{\kappa_{+}(a)}) \theta_{\kappa_{+}(a)}\big)(x;x')
%\end{equation}
%$$-\,\mathcal{U}^{(1)}(t_a-t_{\kappa_{+}(a)}) \theta_{\kappa_{+}(a)}\big)(x;x')\Big].$$

Finally, without loss of generality, it is possible to assume that $\psi^1_{\beta_1}$ and its iterates, i.e. the terms of the form $\psi_{\beta_{\kappa_{+}^{q}(1)}}^{\kappa_{+}^q(1)}$ are functions of $\tilde{\psi}=|\phi|^2 \phi$. We say that these factors are \emph{distinguished}. Here, $\kappa_{+}^q(1)$ denotes the $q-th$ iterate of $\kappa_{+}$ applied to the vertex with label $1$. By construction, this will mean that $\kappa_{+}^q(1)=m_j$ for some $q$. All of the arguments which we have presented here were originally noted in the non-periodic setting in \cite[Section 6]{ChHaPavSei}, but they carry over to the periodic setting without any change.

\subsubsection{\textbf{Recursive $L^2$ and $H^1$ bounds for the distinguished tree}}

We now summarize the main ideas from \cite[Section 7]{ChHaPavSei} in which one inductively uses $L^2$ and $H^1$ bounds in order to estimate the expression in \eqref{IDT}. As we will see, at this step, a modification of the argument will be required to deal with the periodic case. 

By the definition of the kernel $\theta_1$ and by \eqref{Kernel}, the expression in \eqref{IDT} is:

\begin{equation}
\notag
=\int_{[0,T]^{m_j-1}} Tr \big(|\,\mathcal{U}^{(1)}(t-t_{\ell_{j,1}}) \,\theta_1|\big) \, dt_{\ell_{j,m_j-1}} \,dt_{\ell_{j,m_j-2}} \cdots dt_{\ell_{j,1}}
\end{equation}
\begin{equation}
\label{IDT2} 
\leq \sum_{\beta_1} \int_{[0,T]^{m_j-1}} \|\psi_{\beta_1}^{1}\|_{L^2_x} \cdot \|\chi_{\beta_1}^{1}\|_{L^2_x} \, dt_{\ell_{j,m_j-1}} \,dt_{\ell_{j,m_j-2}} \cdots dt_{\ell_{j,1}}.
\end{equation}
In the last line, we used the unitarity of $\mathcal{U}^{(1)}$ and the Cauchy-Schwarz inequality in $\beta_1$.

In what follows, one denotes by $\tau_{j,a}$ the subtree of $\tau_j$ which is rooted at the vertex $a$. Given $\tau_{j,a}$, $d_a$ denotes the number of non-leaf (i.e. internal and root) vertices in $\tau_{j,a}$. Finally, one defines: 
$$\int_{[0,T]^{d_a}} (\cdots) \prod_{a' \in \tau_{j,a}} \, dt_{\ell_{j,a'}}$$ to denote the integral with respect to the time variables which correspond to the non-leaf vertices in $\tau_{j,a}$. If $\tau_{j,a}=\varnothing$, we just evaluate the integrand. If we emphasize that $a' \neq m_j$, then we will not be integrating with respect to $t_{\ell_{j,m_j}}$. With this notation in mind, the following estimates hold for all $a \in \{1,2,\ldots,m_j-1\}:$

\begin{equation}
\label{Bound_on_the_L2_level}
\int_{[0,T]^{d_a}} \|\psi_{\beta_a}^{a}\|_{L^2_x} \cdot \|\chi_{\beta_a}^{a}\|_{H^1_x} \prod_{a' \in \tau_{j,a}} \,dt_{\ell_{j,a'}}
\end{equation}
\begin{equation}
\notag
\leq CT^{\frac{1}{2}} \cdot \big( \int_{[0,T]^{d_{\kappa_{-}(a)}}} \|\psi_{\beta_{\kappa_{-}(a)}}^{\kappa_{-}(a)}\|_{H^1_x} \cdot \|\chi_{\beta_{\kappa_{-}(a)}}^{\kappa_{-}(a)}\|_{H^1_x} \prod_{a' \in \tau_{j,\kappa_{-}(a)}} \, dt_{\ell{j,a'}} \big)
\end{equation}
\begin{equation}
\notag
\cdot \,\big( \int_{[0,T]^{d_{\kappa_{+}(a)}}} \|\psi_{\beta_{\kappa_{+}(a)}}^{\kappa_{+}(a)}\|_{L^2_x} \cdot \|\chi_{\beta_{\kappa_{+}(a)}}^{\kappa_{+}(a)}\|_{H^1_x} \prod_{a' \in \tau_{j,\kappa_{+}(a)}} \, dt_{\ell{j,a'}} \big)
\end{equation}
and
\begin{equation}
\label{Bound_on_the_H1_level}
\int_{[0,T]^{d_a}} \|\psi_{\beta_a}^{a}\|_{H^1_x} \cdot \|\chi_{\beta_a}^{a}\|_{H^1_x} \prod_{a' \in \tau_{j,a}} \,dt_{\ell_{j,a'}}
\end{equation}
\begin{equation}
\notag
\leq CT^{\frac{1}{2}} \cdot \big( \int_{[0,T]^{d_{\kappa_{-}(a)}}} \|\psi_{\beta_{\kappa_{-}(a)}}^{\kappa_{-}(a)}\|_{H^1_x} \cdot \|\chi_{\beta_{\kappa_{-}(a)}}^{\kappa_{-}(a)}\|_{H^1_x} \prod_{a' \in \tau_{j,\kappa_{-}(a)}} \, dt_{\ell{j,a'}} \big)
\end{equation}
\begin{equation}
\notag
\cdot \,\big( \int_{[0,T]^{d_{\kappa_{+}(a)}}} \|\psi_{\beta_{\kappa_{+}(a)}}^{\kappa_{+}(a)}\|_{H^1_x} \cdot \|\chi_{\beta_{\kappa_{+}(a)}}^{\kappa_{+}(a)}\|_{H^1_x} \prod_{a' \in \tau_{j,\kappa_{+}(a)}} \, dt_{\ell{j,a'}} \big).
\end{equation}
The estimate in \eqref{Bound_on_the_L2_level} is referred to as the \emph{bound on the $L^2$ level}, and the estimate in \eqref{Bound_on_the_H1_level} is referred to as the \emph{bound on the $H^1$ level}.
The $L^2$ estimate \eqref{Bound_on_the_L2_level} is proved by using Proposition \ref{Multilinear_estimate} when $s=0$. In particular, we use:
\begin{equation}
\label{product_estimate}
\|\big(e^{it\Delta}f_1\big)(x) \cdot \overline{\big(e^{it\Delta}f_2\big)(x)} \cdot \big(e^{it\Delta}f_3(x)\big)\|_{L^2_{[0,T]}L^2_x} \leq C \|f_1\|_{H^1_x} \cdot \|f_2\|_{H^1_x} \cdot \|f_3\|_{L^2_x}.
\end{equation}
Here, we recall that, by assumption, $T \in [0,1]$.
The estimate \eqref{product_estimate} was also used in the non-periodic setting \cite[Section 7]{ChHaPavSei}. In this case, the trilinear estimate was deduced from Strichartz estimates on $\mathbb{R}^3$.
The rest of the proof of \eqref{Bound_on_the_L2_level} proceeds as in \cite[Section 7]{ChHaPavSei}. We will omit the details. The estimate \eqref{Bound_on_the_H1_level} is proved by using Proposition \ref{Multilinear_estimate} when $s=1$.

\subsubsection{\textbf{Estimating the factors corresponding to distinguished and regular trees}}
\label{Estimating the factors corresponding to distinguished and regular trees}
%We will now estimate the Duhamel terms in \eqref{Duhamel_Expansion_r}. 

We will now record the bounds for the contributions from the distinguished tree and from the regular trees in \eqref{gamma_k_estimate}.

For the distinguished tree, it is necessary to estimate the expression in \eqref{IDT}. By applying \eqref{Bound_on_the_L2_level}, it is possible to bound the right-hand side of \eqref{IDT2} by \footnote{Strictly speaking, we are applying a variant of \eqref{Bound_on_the_L2_level} without the integration in $t_{\ell{j,m_j}}=t_r$. In other words, we are considering the subtree of $\tau_j$ obtained by deleting the distinguished vertex. For simplicity of exposition, we will not emphasize this distinction in what follows.}:

\begin{equation}
\label{IDT3}
\sum_{\beta_{\kappa_{-}(1)},\beta_{\kappa_{+}(1)}} CT^{\frac{1}{2}} \cdot \big(\int_{[0,T]^{d_{\kappa_{-}(1)}}} \,\|\psi_{\beta_{\kappa_{-}(1)}}^{\kappa_{-}(1)}\|_{H^1_x} \cdot \|\chi_{\beta_{\kappa_{-}(1)}}^{\kappa_{-}(1)}\|_{H^1_x} \,\prod_{a' \in \tau_{j,\kappa_{-}(1)}} \,dt_{a} \big)
\end{equation}
$$\cdot \big(\int_{[0,T]^{d_{\kappa_{+}(1)}-1}} \,\|\psi_{\beta_{\kappa_{+}(1)}}^{\kappa_{+}(1)}\|_{L^2_x} \cdot \|\chi_{\beta_{\kappa_{+}(1)}}^{\kappa_{+}(1)}\|_{H^1_x} \,\prod_{a' \in \tau_{j,\kappa_{+}(1)},\,a' \neq m_j}\,dt_{a'}\big).$$
The key point is that the distinguished factor $\psi_{\beta_{\kappa_{+}(1)}}^{\kappa_{+}(1)}$ is estimated in the $L^2$ norm. 
This is possible to do because in Proposition \ref{Multilinear_estimate}, when $s=0$, we can choose which factor we want to estimate in $L^2$.
By iterating \eqref{Bound_on_the_H1_level} as in \cite[Section 8]{ChHaPavSei}, it follows that:
\begin{equation}
\label{IDT4I}
\int_{[0,T]^{d_{\kappa_{-}(1)}}} \,\|\psi_{\beta_{\kappa_{-}(1)}}^{\kappa_{-}(1)}\|_{H^1_x} \cdot \|\chi_{\beta_{\kappa_{-}(1)}}^{\kappa_{-}(1)}\|_{H^1_x} \,\prod_{a' \in \tau_{j,\kappa_{-}(1)}} \,dt_{a}
\leq C_1^{d_{\kappa_{-}(1)}} \cdot T^{\frac{d_{\kappa_{-}(1)}}{2}} \cdot \|\phi\|_{H^1_x}^{2b_{\kappa_{+}(1)}}.
\end{equation}
Here, $b_a$ denotes the number of regular, i.e non-distinguished leaf vertices of the subtree $\tau_{j,a}$, which is rooted at $a$. $C_1>0$ denotes the constant in \eqref{Bound_on_the_H1_level}.

Moreover, by combining \eqref{Bound_on_the_L2_level} and \eqref{Bound_on_the_H1_level} as in \cite[Section 8]{ChHaPavSei}, it follows that:

\begin{equation}
\label{IDT4IIa}
\int_{[0,T]^{d_{\kappa_{+}(1)}-1}} \,\|\psi_{\beta_{\kappa_{+}(1)}}^{\kappa_{+}(1)}\|_{L^2_x} \cdot \|\chi_{\beta_{\kappa_{+}(1)}}^{\kappa_{+}(1)}\|_{H^1_x} \,\prod_{a' \in \tau_{j,\kappa_{+}(1)},\,a' \neq m_j}\,dt_{a'}
\end{equation} 
$$\leq C_1^{d_{\kappa_{+}(1)}-1} \cdot T^{\frac{d_{\kappa_{+}(1)}-1}{2}} \cdot \|\phi\|_{H^1_x}^{2b_{\kappa_{+}(1)}+1} \cdot \||\phi|^2 \phi\|_{L^2_x}.$$

Here, we take $C_1$ to be a constant which is possibly larger than above in order to take into account the implied constant in \eqref{Bound_on_the_L2_level}.

By applying the Sobolev embedding result in \eqref{Sobolev_embedding_torus}, it follows that the right-hand side of \eqref{IDT4IIa} is:

\begin{equation}
\label{IDT4II}
\leq C_2^{d_{\kappa_{+}(1)-1}} \cdot T^{\frac{d_{\kappa_{+}(1)-1}}{2}} \cdot \|\phi\|_{H^1_x}^{2b_{\kappa_{+}(1)}+4},
\end{equation}
for some constant $C_2>0$. In particular, by \eqref{IDT4I} and \eqref{IDT4II}, the sum in \eqref{IDT3} is:
$$\leq \sum_{\beta_{\kappa_{+}(1)},\beta_{\kappa_{-}(1)}} C_3^{d_{\kappa_{-}(1)}+d_{\kappa_{+}(1)}} \cdot T^{\frac{d_{\kappa_{+}(1)}+d_{\kappa_{-}(1)}}{2}} \cdot \|\phi\|_{H^1_x}^{2b_{\kappa_{-}(1)}+2b_{\kappa_{+}(1)}+4},$$
for some constant $C_3>0$.
Finally, we use the fact that:
\begin{equation}
\notag
\begin{cases}
d_{\kappa_{-}(1)}+d_{\kappa_{+}(1)}=m_j-1\\
b_{\kappa_{-}(1)}+b_{\kappa_{+}(1)}=m_j-1.
\end{cases}
\end{equation}
as well as the expansion \eqref{Kernel} to count the number of terms in the sum in order to deduce that, for some constant $C_4>0$:

\begin{equation}
\label{IDT5}
\int_{[0,T]^{m_j-1}} Tr \big(|J_j^1(\phi;\sigma_j;t,t_{\ell_{j,1}},\ldots,t_{\ell_{j,m_j}})|\big) \, dt_{\ell_{j,m_j-1}} \cdots \, dt_{\ell_{j,1}}
\end{equation} 
\begin{equation}
\notag
\leq C_4^{m_j} \cdot T^{\frac{m_j-1}{2}} \cdot \|\phi\|_{H^1_x}^{2m_j+2}.
\end{equation}
This is the bound that one uses for the distinguished tree $\tau_j$.

Suppose now that $\tau_j$ is a regular tree. In this case, we can iteratively use the estimate \eqref{Bound_on_the_H1_level} and obtain the bound as in \cite[Proposition 8.2]{ChHaPavSei}:

\begin{equation}
\label{IRTB}
\int_{[0,T]^{m_j}} Tr\big(|J_j^1(\phi;\sigma_j;t,t_{\ell{j,1}},\ldots,t_{\ell{j,m_j}})|\big) \, dt_{\ell{j,m_j}} \,dt_{\ell{j,m_j-1}} \cdots \,dt_{\ell{j,1}}
\end{equation}
\begin{equation}
\notag
\leq C_4^{m_j} \cdot T^{\frac{m_j}{2}} \cdot \|\phi\|_{H^1_x}^{2m_j+2}.
\end{equation}
Here, we choose a possibly larger constant $C_4>0$. This is the bound that we use for the regular trees. 

\subsubsection{\textbf{The proof of Theorem \ref{Unconditional_uniqueness}}}

By using the previous results, we will now give the proof of Theorem \ref{Unconditional_uniqueness}:

\begin{proof}
Combining \eqref{gamma_k_estimate}, \eqref{IDT5}, and \eqref{IRTB}, it follows that for all $k \in \mathbb{N}$ and $t \in [0,T]$:
\begin{equation}
\label{gamma_k_bound_proof_1}
Tr(|\gamma^{(k)}(t)|) \leq C^{k+r} \cdot \sum_{\ell=1}^{2} C_4^r \cdot T^{\frac{r-1}{2}} \cdot \int_{0}^{t} \int_{\mathcal{B}} \|\phi\|_{H^1_x}^{2(k+r)}\,d\mu^{(\ell)}_{t_r} (\phi) \, dt_r.
\end{equation}
By \eqref{Growth_Bound}, this expression is:
\begin{equation}
\label{gamma_k_bound_proof}
\leq 2 \cdot C^{k+r} \cdot C_4^r \cdot T^{\frac{r-1}{2}} \cdot \int_{0}^{t} M^{k+r} \, dt_r \leq 2 \cdot C^{k+r} \cdot C_4^r \cdot T^{\frac{r+1}{2}} \cdot M^{k+r}.
\end{equation}
Here $M:=\max\{M_1,M_2\}$, where $M_1$ and $M_2$ are as in \eqref{Growth_Bound}. The upper bound in \eqref{gamma_k_bound_proof} equals:
\begin{equation}
\notag
2 \cdot C^k \cdot M^k \cdot T^{\frac{1}{2}} \cdot (C \cdot C_4 \cdot M \cdot T^{\frac{1}{2}})^r
\end{equation}
We first suppose that $T \in (0,1]$ is sufficiently small such that $C \cdot C_4 \cdot M \cdot T^{\frac{1}{2}} <1$ and we let $r \rightarrow \infty$ to deduce that $\gamma^{(k)} \equiv 0$ on $[0,T]$. For general $T$, we divide the time interval $[0,T]$ into finitely many pieces which are sufficiently small and we repeat the argument. This proves \eqref{claim} and the unconditional uniqueness result now follows.
In order to deduce \eqref{gammak_formula}, we note that:
\begin{equation}
\notag
\tilde{\gamma}^{(k)}(t):=\int_{\mathcal{B}} \big(|S_t(\phi) \rangle \langle S_t(\phi)|^{\otimes k}\big) \,d\mu(\phi).
\end{equation}
gives a mild solution to \eqref{GPhierarchy} in $L^{\infty}_{t \in [0,T]} \mathfrak{H}^1$ with initial data
%is a mild solution of the GP hierarchy in $L^{\infty}_{[0,T]}\mathfrak{h}^1$ with initial data
\begin{equation}
\notag
\tilde{\gamma}^{(k)}(0)=\int_{\mathcal{B}} \big(|\phi \rangle \langle \phi|^{\otimes k}\big) \,d\mu(\phi)=\gamma^{(k)}(0).
\end{equation}
We apply the above uniqueness result in order to deduce that, for all $k \in \mathbb{N}$ and $t \in [0,T]$:
\begin{equation}
\notag
\tilde{\gamma}^{(k)}(t)=\gamma^{(k)}(t).
\end{equation}
The theorem now follows.
\end{proof}

%$$\mathop{\sum_{|n-p| \leq 2^{j_2}}}_{|n+m-p| \leq 2^{j_3}}$$

\section{The derivation of the defocusing cubic NLS on $\mathbb{T}^3$}
\label{derivation of defocusing cubic NLS on T3}

We will now apply the above unconditional uniqueness theorem in order to obtain a derivation of the defocusing cubic NLS on $\mathbb{T}^3$, following the program of Elgart, Erd\H{o}s, Schlein, and Yau \cite{EESY} and Erd\H{o}s, Schlein, and Yau \cite{ESY1,ESY2,ESY3,ESY4,ESY5}. 
%In particular, we recall that in \cite{EESY}, the authors obtain a derivation of the Gross-Pitaevskii hierarchy on $\mathbb{T}^3$ under appropriate assumptions of the initial data. The mentioned work builds on previous results proved in \cite{ESY1}. In these works, there is no uniqueness result and hence one does not obtain a derivation of the NLS. We will precisely recall the known results in Theorem \ref{EESY_Theorem} below.
%Uniqueness in this context was proved in the non-periodic setting in \cite{ESY2,ESY3,ESY4,ESY5} by the use of Feynman diagram techniques. A direct Feynman diagram approach does not have obvious analogues in the periodic setting. We can view the Quantum de Finetti theorem combined with the boardgame argument  as a reformulation of the Feynman graph approach which is applicable in the periodic setting. In particular, we will now show that Theorem \ref{Unconditional_uniqueness} can be applied in the framework of \cite{EESY}, thus obtaining a derivation of the defocusing cubic NLS on $\mathbb{T}^3$. 
Let us note that the results of Section \ref{An unconditional uniqueness result} apply both in the focusing and in the defocusing regime. The results of this section will apply only in the \emph{defocusing} regime.
More precisely, in what follows, we will set $\lambda=1$ in \eqref{GPhierarchy} and \eqref{NLS_T3}. As before, $S_t$ will denote the corresponding flow map. The main point that we want to address in the analysis of this section is that, for appropriately chosen initial data, the solution to the Gross-Pitaevskii hierarchy obtained as a limit in \cite{EESY} satisfies the assumptions of Theorem \ref{Unconditional_uniqueness}. In this context, it is possible to apply the unconditional uniqueness result from Theorem \ref{Unconditional_uniqueness} and uniquely characterize the limit 
point $\Gamma_{\infty,t}$. This will be an important idea in the proof of the main result, which is stated as Theorem \ref{Cubic_NLS_T3_derivation} below. 
%It is applied in \eqref{Gamma_tilde_infty} in the proof.

For completeness, let us now briefly summarize the main results of \cite{EESY}. Suppose that $V: \mathbb{R}^3 \rightarrow \mathbb{R}$ is a smooth, non-negative function with compact support such that $\int_{\mathbb{R}^3}V(x)\,dx=1$. The bosons in $\mathbb{T}^3$ interact via a two-body potential which is given by: 
\begin{equation}
\notag
V_N(x):=N^{3\beta} V(N^{\beta}x),
\end{equation} 
for $\beta>0$ a parameter. The $N-$body Hamiltonian for the $N$ weakly coupled bosons is given by:
\begin{equation}
\label{HN}
H_N:=-\sum_{j=1}^{N}\Delta_j+\frac{1}{N}\sum_{\ell < j}^{N}V_N(x_{\ell}-x_j)
\end{equation}
In \cite{EESY}, it is assumed that:
\begin{equation}
\label{assumption_on_beta}
\beta \in (0,\frac{3}{5}).
\end{equation}
In what follows, we will make the same assumption on $\beta$.
$H_N$ acts on a dense subset of $L^2_{sym}(\Lambda^N \times \Lambda^N)$.
We remark that $V_N$ is originally a function on $\mathbb{R}^3$. Nevertheless, for $N$ large, $V_{N}$ is supported in a small neighborhood of $0$. Hence, it can be extended to a periodic function on $\mathbb{R}^3$. By this procedure, it can be thought of as a function on $\mathbb{T}^3$. We will use this convention in our paper.

Let us fix $\Psi_{N,0} \in L^2_{sym}(\Lambda^N)$. Let $\Psi_{N,t}$ be the solution of the $N$-body Schr\"{o}dinger equation associated to $H_N$ with initial data $\Psi_{N,0}$ given in \eqref{PsiNt_equation}.
%\begin{equation}
%\notag
%\begin{cases}
%i \partial_t \psi_{N,t}=H_N \,\psi_{N,t}\\
%\Psi_{N,t}\big|_{t=0}=\Psi_{N,0}.
%\end{cases}
%\end{equation}
%The solution $\psi_{N,t}$ belongs to $L^2_{sym}(\Lambda^N)$. In particular, $\|\Psi_{N,t}\|_{L^2(\Lambda^N)}=\|\Psi_{N,0}\|_{L^2(\Lambda^N)}.$
%If $\gamma_{N,t}:=\big|\psi_{N,t} \rangle \langle \psi_{N,t}\big|$, then $\gamma_{N,t} \in L^2_{sym}(\Lambda^N \times \Lambda^N)$ solves:
%\begin{equation}
%\notag
%i \partial_t \gamma_{N,t}=[H_N,\gamma_{N,t}].
%\end{equation}
%We note that $\gamma_{N,t} \geq 0$
We recall from \eqref{gammakNt_original} that if we take
%for $k \in \{1,2,\ldots,N\}$ that we take 
$\gamma^{(k)}_{N,t}:=Tr_{k+1,\ldots,N} \,\big|\Psi_{N,t} \rangle \langle \Psi_{N,t}\big|,$ the sequence $(\gamma^{(k)}_{N,t})_k$ solves the BBGKY hierarchy \eqref{BBGKY}.

%\begin{equation}
%\label{BBGKY}
%i \partial_t \gamma^{(k)}_{N,t} + \big(\Delta_{\vec{x}_k}-\Delta_{\vec{x}'_k}\big) \gamma^{(k)}_{N,t}=
%\end{equation}
%\begin{equation}
%\notag
%\frac{1}{N} \sum_{\ell<j}^{k} \big[V_{N}(x_{\ell}-x_j),\gamma^{(k)}_{N,t}\big] + \frac{N-k}{N}\sum_{j=1}^{k} Tr_{k+1} \big[V_{N}(x_j-x_{k+1}),\gamma^{(k+1)}_{N,t}\big].
%\end{equation}

In \cite{EESY,ESY1}, the authors define for $\Gamma \in \bigoplus_{k \in \mathbb{N}}L^2_{sym}(\Lambda^k \times \Lambda^k)$
%(\gamma^{(k)})_{k}$ such that $\gamma^{(k)} \in L^2(\Lambda^k \times \Lambda^k)$ 
the following quantities:
\begin{equation}
\label{H-}
\|\Gamma\|_{H_{-}}:=\sum_{k=1}^{+\infty} 2^{-k} \cdot \|\gamma^{(k)}\|_{L^2(\Lambda^k \times \Lambda^k)}^2
\end{equation}
\begin{equation}
\label{H+}
\|\Gamma\|_{H_{+}}:=\sup_{k \geq 1} 2^k \cdot \|\gamma^{(k)}\|_{L^2(\Lambda^k \times \Lambda^k)}^2.
\end{equation}
One then defines $H_{-}$ to be the set of $\Gamma \in \bigoplus_{k \in \mathbb{N}}L^2_{sym}(\Lambda^k \times \Lambda^k)$ for which $\|\Gamma\|_{H^{-}}$ is finite. The space $H_{+}$ is defined as $\{\Gamma \in \bigoplus_{k \in \mathbb{N}} L^2(\Lambda^k \times \Lambda^k),\,\lim_{k \rightarrow \infty} 2^k \cdot \|\gamma^{(k)}\|_{L^2(\Lambda^k \times \Lambda^k)}^2=0\}$, with the norm given by \eqref{H+}. $H_{-}$ and $H_{+}$ are then Banach spaces. Furthermore: $$(H_{+})^{*}=H_{-}$$
with respect to the pairing:
$$(\Gamma_1,\Gamma_2) \mapsto \sum_{k=1}^{+\infty} \langle \gamma_1^{(k)},\gamma_2^{(k)} \rangle_{L^2(\Lambda^k \times \Lambda^k)},$$
whenever $\Gamma_1=(\gamma_1^{(k)})_k$ and $\Gamma_2=(\gamma_2^{(k)})_k$ in $\bigoplus_{k \in \mathbb{N}}L^2_{sym}(\Lambda^k \times \Lambda^k)$.

Given $T>0$, $C([0,T],H_{-})$ denotes the space of functions of $t \in [0,T]$ which take values in $H_{-}$, and which are continuous with respect to the weak$-*$ topology on $H_{-}$. The space $H_{+}$ is separable, and hence it is possible to find a countable dense subset in the unit ball of $H_{+}$, which is denoted by $\{\mathcal{J}_{i,\ell}, \ell \in \mathbb{N}\}$. The metric $\varrho$ on $H_{-}$ is defined as:
\begin{equation}
\label{rho}
\varrho\,(\Gamma_1,\Gamma_2):=
\end{equation}
\begin{equation}
\notag
\sum_{\ell=1}^{+\infty} 2^{-\ell} \cdot \Big|\sum_{k=1}^{+\infty} \int_{\Lambda^k \times \Lambda^k} \overline{\mathcal{J}_{i,\ell}^{(k)}(\vec{x}_k;\vec{x}'_k)} \cdot \big[\gamma_1^{(k)}(\vec{x}_k;\vec{x}'_k)-\gamma_2^{(k)}(\vec{x}_k;\vec{x}'_k)]\big] \, d\vec{x}_k\,d\vec{x}'_k \Big|,
\end{equation}
whenever $\Gamma_1=(\gamma^{(k)}_1)_k, \Gamma_2=(\gamma^{(k)}_2)_k$ in $H_{-}$.

The metric $\widehat{\varrho}$ on $C([0,T],H_{-})$ is defined as:
\begin{equation}
\notag
\widehat{\varrho}\,(\Gamma_1,\Gamma_2):=\sup_{t \in [0,T]} \varrho\,(\Gamma_1(t),\Gamma_2(t)),
\end{equation}
whenever $\Gamma_1,\Gamma_2 \in C([0,T],H_{-})$. For more details on these spaces, we refer the reader to \cite{EESY,ESY1}.

The main result of \cite{EESY} is the following:

\begin{theorem}(Theorem 1 in \cite{EESY})
\label{EESY_Theorem}
Given $N \in \mathbb{N}$, let $H_N$ be as in \eqref{HN}. Suppose that $\gamma_{N,0} \in L^2_{sym}(\Lambda^N \times \Lambda^N)$ is such that $\gamma_{N,0}$ is self-adjoint, $\gamma_{N,0} \geq 0$ as an operator on $L^2(\Lambda^N)$ and $Tr \,\gamma_{N,0}=1$. Furthermore, suppose that there exists a constant $C_1>0$ such that for all $k,N \in \mathbb{N}$:
\begin{equation}
\label{Assumption_EESY}
Tr\, H_N^k \gamma_{N,0} \leq C_1^k N^k.
\end{equation} 
Let us fix $T>0$ and let $\Gamma_{N,t}=(\gamma^{(k)}_{N,t})_k$ be the solution to the BBGKY hierarchy \eqref{BBGKY} with initial data $\Gamma_{N,0}=(\gamma^{(k)}_{N,0})_k$.
Then:
\begin{itemize}
\item[i)] The sequence $(\Gamma_{N,t})_N$ is relatively compact in $C([0,T],H_{-})$ with respect to the metric $\widehat{\varrho}$.
\item[ii)] If $\Gamma_{\infty,t}=(\gamma^{(k)}_{\infty,t})_k$ is a limit point of $(\Gamma_{N,t})_N$ with respect to the metric $\widehat{\varrho}$, then there exists a constant $C_2>0$ such that:
\begin{equation}
\notag
Tr \big|S^{(k,1)}\gamma^{(k)}_{\infty,t}\big| \leq C_2^k,
\end{equation}
for all $k \in \mathbb{N}$ and for all $t \in [0,T]$.
\item[iii)] $Tr \, \gamma^{(k)}_{\infty,t}=1$ for all $k \in \mathbb{N}$ and for all $t \in [0,T]$.
\item[iv)] $\Gamma_{\infty,t}$ solves the Gross-Pitaevskii hierarchy \eqref{GPhierarchy} with $\lambda=1$.
\end{itemize}
\end{theorem}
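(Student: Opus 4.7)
My plan is to follow Spohn's strategy adapted to the periodic setting, which splits naturally into an a priori estimate phase and a compactness/limit passage phase. The cornerstone is the energy bound \eqref{Assumption_EESY}, which is conserved by the $N$-body evolution since $H_N$ commutes with itself, giving $\mathrm{Tr}\, H_N^k \gamma_{N,t} \leq C_1^k N^k$ for all $t$. The first step is to convert this energy bound into a Sobolev-type bound on the marginals. The basic operator inequality I would want is
\begin{equation}
\notag
H_N^k \;\gtrsim\; c^k \prod_{j=1}^{k} (1-\Delta_{x_j}) \cdot N^k / \text{combinatorial factor},
\end{equation}
in the sense of quadratic forms on $L^2_{\mathrm{sym}}(\Lambda^N)$, modulo the lower-order interaction terms which can be absorbed using the assumption \eqref{assumption_on_beta} on $\beta$. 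Together with partial tracing, this translates to a uniform bound $\mathrm{Tr}\,|S^{(k,1)} \gamma^{(k)}_{N,t}| \leq C_2^k$ independently of $N$, which is the candidate for part (ii).

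For the compactness statement (i), I would apply a generalized Arzelà--Ascoli argument in the weak-$*$ topology, as is standard in this literature. Uniform boundedness of $(\Gamma_{N,t})_N$ in $H_-$ follows immediately from $\mathrm{Tr}\,\gamma^{(k)}_{N,t}=1$ and the inequality $\|\gamma^{(k)}_{N,t}\|_{L^2(\Lambda^k\times\Lambda^k)} \leq \mathrm{Tr}\,\gamma^{(k)}_{N,t}=1$. The equicontinuity in $t$ with respect to the metric $\varrho$ is obtained by testing the BBGKY hierarchy \eqref{BBGKY} against each $\mathcal{J}^{(k)}_{i,\ell}$ from the countable dense family and integrating in time. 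Each such pairing produces a kinetic term, which is bounded using that $\mathcal{J}^{(k)}_{i,\ell}$ is smooth, and a potential term, which is bounded using the a priori Sobolev estimate derived above together with the fact that $\|V_N\|_{L^1}=1$. A diagonal extraction then yields a subsequence convergent in $C([0,T],H_-)$ and simultaneously weak-$*$ in trace class at each marginal level. Passing to the liminf in the a priori bound and using weak-$*$ lower semicontinuity of the trace norm yields (ii) for the limit points.

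For part (iii), the key is that on $\mathbb{T}^3$ one has no escape of mass issues: the uniform Sobolev-type control from (ii) ensures that the marginals $\gamma^{(k)}_{N,t}$ are in fact compact in trace norm along the subsequence (by Rellich and the standard characterization of trace-norm convergence of non-negative trace class operators in terms of weak-$*$ convergence plus trace convergence), so the trace is preserved. Part (iv) is then the identification of the limit with a mild solution of the GP hierarchy. The approach is to pair the BBGKY hierarchy with a smooth test density matrix $\mathcal{J}^{(k)}$ and pass to the limit term by term. The free evolution passes immediately, the lower-order interaction piece $\tfrac{1}{N}\sum_{\ell<j}^k[V_N(x_\ell-x_j),\gamma^{(k)}_{N,t}]$ vanishes in the limit since it carries a factor of $1/N$ against a potential bound that grows only polynomially in $N$, and the prefactor $(N-k)/N$ in the main interaction term converges trivially to $1$.

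The hard part, and the main obstacle, is the convergence of the interaction term
\begin{equation}
\notag
\sum_{j=1}^k \mathrm{Tr}_{k+1}\bigl[V_N(x_j-x_{k+1}),\gamma^{(k+1)}_{N,t}\bigr] \;\longrightarrow\; \sum_{j=1}^k B_{j,k+1}\gamma^{(k+1)}_{\infty,t},
\end{equation}
i.e.\ justifying that $V_N(x) \to \delta(x)$ in the operator sense when tested against the $(k+1)$-particle marginals. This is achieved by inserting a smooth cutoff $\chi_\eta(x_j-x_{k+1})$ with $\eta\to 0$ after $N\to\infty$, splitting into three differences: (a) the difference between $V_N$ and $\delta$ tested against the regularized marginal $\chi_\eta\ast\gamma^{(k+1)}_{N,t}$, which vanishes because the regularized marginal is continuous and $V_N$ is an approximate identity; (b) the error from replacing $\gamma^{(k+1)}_{N,t}$ by its regularization, which is controlled by the uniform $\mathrm{Tr}\,|S^{(k+1,1)}\gamma^{(k+1)}_{N,t}|\leq C^{k+1}$ bound from (ii) combined with the Sobolev embedding $H^1(\mathbb{T}^3)\hookrightarrow L^6(\mathbb{T}^3)$ and the fact that smoothing by $\chi_\eta$ is close to the identity in the relevant dual norm; and (c) the analogous error for $\gamma^{(k+1)}_{\infty,t}$, handled by the already established bound in (ii) for the limit. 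Choosing $\eta$ as a suitable function of $N$ and sending $N\to\infty$ closes the argument. This cutoff/regularization argument is where the restriction $\beta\in(0,\tfrac{3}{5})$ ultimately enters, since it controls how strongly $V_N$ concentrates relative to the regularity afforded by the Sobolev bound.
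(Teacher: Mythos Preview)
The paper does not prove this theorem; it is quoted from \cite{EESY} (as the heading ``Theorem 1 in \cite{EESY}'' indicates) and used as a black box in the proof of Theorem \ref{Cubic_NLS_T3_derivation}. There is therefore no proof in the present paper to compare your proposal against.

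That said, your outline is a faithful summary of the strategy actually carried out in \cite{EESY}: one proves an operator inequality of the form $H_N^k \geq c^k N^k \prod_{j=1}^{k}(1-\Delta_{x_j})$ on the bosonic sector (this is Proposition~1 in \cite{EESY}, and its periodic version relies on a Sobolev-type inequality proved in Appendix~A there), uses it together with conservation of $\mathrm{Tr}\,H_N^k\gamma_{N,t}$ to get the uniform bound $\mathrm{Tr}\,|S^{(k,1)}\gamma^{(k)}_{N,t}|\leq C_2^k$, then runs Arzel\`a--Ascoli for compactness and passes to the limit in the BBGKY hierarchy via an approximation-of-identity argument for $V_N\to\delta$. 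One correction to your sketch: the restriction $\beta\in(0,\tfrac{3}{5})$ in \eqref{assumption_on_beta} is used in \cite{EESY} in the proof of the a priori operator inequality (controlling the interaction terms when commuting through the Laplacians), not in the limit passage you describe at the end; once the uniform $H^1$-type bound on the marginals is in hand, the convergence of the interaction term to the collision operator goes through for any $\beta>0$.
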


\begin{remark}
In the assumption \eqref{Assumption_EESY}, we note that the operator $H_N$ acts only in the first $N$ components.
\end{remark}

\begin{remark}
$\Gamma_{\infty,t}$ constructed in Theorem \ref{EESY_Theorem} is a mild solution of the Gross-Pitaevskii hierarchy in the sense of Definition \ref{Mild_solution}.
The precise argument showing that the solution of type as is constructed in \cite{EESY} is a mild solution is given in \cite[equation (8.31)]{ESY2}.
\end{remark}

\begin{remark}
The fact that we can take $\lambda=1$ in \eqref{GPhierarchy} follows from the assumption that $\int_{\mathbb{R}^3} V(x)\,dx=1$.
\end{remark}

In the discussion that follows, let us fix $\phi \in H^1(\Lambda)$ with $\|\phi\|_{L^2(\Lambda)}=1$. By properly choosing the initial data, we will show that Theorem \ref{Unconditional_uniqueness} implies that $\Gamma_{\infty,t}$ is uniquely determined and that $\gamma^{(k)}_{\infty,t}=|S_t(\phi)\rangle \langle S_t(\phi)|^{\otimes k}$. 
%More precisely, we need to arrange for $\Gamma_{\infty,t}$ to satisfy the assumptions of Theorem \ref{Unconditional_uniqueness}. 
Let us observe that, by the assumption \eqref{Assumption_EESY} of Theorem \ref{EESY_Theorem}, it is not possible to directly take factorized initial data $\gamma_{N,0}=|\phi^{\otimes N} \rangle \langle \phi^{\otimes N}|=|\phi \rangle \langle \phi|^{\otimes N}$. Namely, then \eqref{Assumption_EESY} only holds when $k=1$. In order to obtain the full range of $k$, it is necessary to perform an additional approximation argument which is outlined in Subsection \ref{Approximation} below.

Let us consider a sequence $(\Psi_N)_N \in \bigoplus_{N \in \mathbb{N}} L^2_{sym}(\Lambda^N)$ which satisfies the following assumptions:

1)\,\emph{Bounded energy per particle:}
\begin{equation}
\label{Bounded_energy_per_particle}
\mathop{sup}_{N} \frac{1}{N} \langle H_N \Psi_N, \Psi_N \rangle_{L^2(\Lambda^N)} < +\infty.
\end{equation}
Here, $H_N$ is the $N$-body Hamiltonian defined in \eqref{HN}.
%$$\mathop{\sum_{|n-p| \leq 2^{j_2}}}_{|n+m-p| \leq 2^{j_3}}$$
2)\,\emph{Asymptotic factorization:}
\begin{equation}
\label{Asymptotic_factorization}
Tr \big|\gamma^{(1)}_{N}-|\phi \rangle \langle \phi| \big| \rightarrow 0
\end{equation}
as $N \rightarrow \infty$. Here, $\gamma^{(1)}_N:=Tr_{2,3,\ldots,N} |\Psi_N \rangle \langle \Psi_N|$.

The main result of this section is the following:

\begin{theorem}{(A derivation of the defocusing cubic NLS on $\mathbb{T}^3$)}
\label{Cubic_NLS_T3_derivation} 

Let $\phi \in H^1(\Lambda)$ with $\|\phi\|_{L^2(\Lambda)}=1$ be given.
Suppose that the sequence $(\Psi_N)_N \in \bigoplus_{N \in \mathbb{N}} L^2(\Lambda^N)$ satisfies the assumptions \eqref{Bounded_energy_per_particle} and \eqref{Asymptotic_factorization}. For fixed $N \in \mathbb{N}$, we define: $\gamma^{(k)}_N:=Tr_{k+1,\ldots,N}|\Psi_N \rangle \langle \Psi_N|$. Let $(\gamma^{(k)}_{N,t})_k$ denote the solution of the BBGKY hierarchy evolving from initial data $(\gamma^{(k)}_N)_k$.

Let us fix $T>0$. There exists a sequence $N_j \rightarrow \infty$ as $j \rightarrow \infty$ such that, for all $k \in \mathbb{N}$ and for all $t \in [0,T]$:

\begin{equation}
\label{Convergence_result}
Tr \,\Big|\gamma^{(k)}_{N_j,t}-|S(t)\phi \rangle \langle S(t)\phi|^{\otimes k}\Big| \rightarrow 0
\end{equation}
as $j \rightarrow \infty$.
\end{theorem}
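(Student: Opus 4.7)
The plan is to follow Spohn's strategy: establish compactness of the BBGKY marginals (Theorem \ref{EESY_Theorem}), identify any limit point as a mild solution of the defocusing GP hierarchy (also Theorem \ref{EESY_Theorem}), and then invoke Theorem \ref{Unconditional_uniqueness} to force the limit to be the factorized NLS solution. The immediate obstruction is that Theorem \ref{EESY_Theorem} demands the higher-order energy bounds $\mathrm{Tr}\,H_N^k \gamma_{N,0}\le C_1^k N^k$ for every $k$, whereas \eqref{Bounded_energy_per_particle} only controls $k=1$. The bulk of the work is therefore routed through the energy-cutoff approximation alluded to in Subsection \ref{Approximation}.

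Concretely, for a small parameter $\kappa>0$, I would set $\widetilde{\Psi}_{N,\kappa}:=\chi(\kappa H_N/N)\Psi_N/\|\chi(\kappa H_N/N)\Psi_N\|_{L^2}$ for a fixed smooth compactly supported $\chi$ with $\chi\equiv 1$ near the origin, following \cite{ESY2,ESY4,KSS}. Functional calculus plus \eqref{Bounded_energy_per_particle} yields $\mathrm{Tr}\,H_N^k\,|\widetilde{\Psi}_{N,\kappa}\rangle\langle\widetilde{\Psi}_{N,\kappa}|\le C(\kappa)^k N^k$ for every $k$, so the hypothesis of Theorem \ref{EESY_Theorem} is met. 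The asymptotic factorization \eqref{Asymptotic_factorization} is preserved after taking $N\to\infty$ and then $\kappa\to 0$, and the original and approximated BBGKY trajectories differ by an error that is small in $\widehat{\varrho}$ uniformly in $N$ when $\kappa$ is small. This reduction is exactly what is carried out in Subsections \ref{Approximation} and \ref{A comparison of different forms of convergence}.

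Applying Theorem \ref{EESY_Theorem} to the approximated sequence and extracting a subsequence produces a limit $\widetilde{\Gamma}_{\infty,\kappa,t}=(\widetilde{\gamma}^{(k)}_{\infty,\kappa,t})_k$ that is a mild solution of the defocusing GP hierarchy in $L^\infty_{t\in[0,T]}\mathfrak{H}^1$. I now verify the hypotheses of Theorem \ref{Unconditional_uniqueness} at each $t$: take $\Gamma_{N,t}:=|\widetilde{\Psi}_{N,\kappa,t}\rangle\langle\widetilde{\Psi}_{N,\kappa,t}|\in L^2_{sym}(\Lambda^N\times\Lambda^N)$, which is non-negative with unit trace; convergence in $\widehat{\varrho}$ together with the uniform $\mathfrak{H}^1$ bound (and the comparison of convergence modes recalled in Subsection \ref{A comparison of different forms of convergence}) upgrades to weak-$*$ convergence $\mathrm{Tr}_{k+1,\ldots,N}\Gamma_{N,t}\rightharpoonup^{*}\widetilde{\gamma}^{(k)}_{\infty,\kappa,t}$ in $\mathcal{L}^1_k$. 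At $t=0$, the asymptotic factorization for $\widetilde{\Psi}_{N,\kappa}$ combined with the standard fact that trace-norm convergence of one-particle marginals to a pure state $|\phi\rangle\langle\phi|$ automatically propagates to all higher marginals (see, e.g., \cite[Remark 1.4]{LewinNamRougerie}) gives $\widetilde{\gamma}^{(k)}_{\infty,\kappa,0}=|\phi\rangle\langle\phi|^{\otimes k}$ modulo an error vanishing with $\kappa$.

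Theorem \ref{Unconditional_uniqueness} then forces $\widetilde{\gamma}^{(k)}_{\infty,\kappa,t}=|S_t(\phi)\rangle\langle S_t(\phi)|^{\otimes k}$, up to the same $\kappa$-error. Because the limit is a rank-one projection and $\mathrm{Tr}\,\widetilde{\gamma}^{(k)}_{N,\kappa,t}=1$ for every $N$, weak-$*$ convergence in $\mathcal{L}^1_k$ automatically upgrades to trace-norm convergence (the classical fact that, when the limit is a trace-class operator of the same trace, weak-$*$ plus convergence of traces yields convergence in $\|\cdot\|_1$). A diagonal extraction in $(N,\kappa)$ then produces a single sequence $N_j\to\infty$ along which \eqref{Convergence_result} holds for every $k\in\mathbb{N}$ and every $t\in[0,T]$. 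The main obstacle I expect is the bookkeeping of this two-parameter limit: one must simultaneously verify the weak-$*$ hypothesis of Theorem \ref{Unconditional_uniqueness} at each $t$ (not only in a time-averaged sense), propagate asymptotic factorization from $k=1$ to all $k$ through the cutoff, and match errors so that the final statement is uniform in $k$ and $t$.
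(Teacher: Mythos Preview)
Your proposal is correct and follows essentially the same route as the paper: energy-cutoff approximation $\widetilde{\Psi}_N^{\kappa}$ to meet the hypotheses of Theorem \ref{EESY_Theorem}, upgrade of $\widehat{\varrho}$-convergence to weak-$*$ convergence in $\mathcal{L}^1_k$ via Subsection \ref{A comparison of different forms of convergence}, application of Theorem \ref{Unconditional_uniqueness}, upgrade of weak-$*$ to trace-norm convergence using that both sides have trace one, and a final diagonal argument in $\kappa$. One small imprecision worth correcting: for each fixed sufficiently small $\kappa$, property \eqref{Approximation3} gives $\widetilde{\gamma}^{(k)}_{\kappa;\,\infty,0}=|\phi\rangle\langle\phi|^{\otimes k}$ \emph{exactly}, so Theorem \ref{Unconditional_uniqueness} yields $\widetilde{\gamma}^{(k)}_{\kappa;\,\infty,t}=|S_t(\phi)\rangle\langle S_t(\phi)|^{\otimes k}$ exactly rather than ``up to a $\kappa$-error''; the $\kappa$-error enters only at the very last step, in the trace-norm comparison between $\gamma^{(k)}_{N,t}$ and $\widetilde{\gamma}^{(k)}_{\kappa;\,N,t}$ via \eqref{Approximation2}.
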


%The solution $\Gamma_{\infty,t}=(\gamma^{(k)}_{\infty,t})_k$ of the Gross-Pitaevskii hierarchy is uniquely determined. In particular $\gamma^{(k)}_{\infty,t}=|S(t)\phi \rangle \langle S(t)\phi|^{\otimes k}$ for all $k \in \mathbb{N}$ and for all $t \in [0,T]$.

In particular, we obtain:

%\begin{corollary}
%\label{Propagation_of_chaos} 
%Let $\phi \in H^1(\Lambda)$ with $\|\phi\|_{L^2(\Lambda)}=1$ be given. We let $\Psi_N:=|\phi \rangle \langle \phi|^{\otimes N}$. Suppose that $(\gamma^{(k)}_{N,t})$ is given as in the statement of Theorem \ref{Cubic_NLS_T3_derivation}.
%Let us fix $T>0$. There exists a sequence $N_j \rightarrow \infty$ as $j \rightarrow \infty$ such that, for all $k \in \mathbb{N}$ and for all $t \in [0,T]$:

%\begin{equation}
%\notag
%Tr \,\Big|\gamma^{(k)}_{N_j,t}-|S(t)\phi \rangle \langle S(t)\phi|^{\otimes k}\Big| \rightarrow 0
%\end{equation}
%as $j \rightarrow \infty$.
%\end{corollary}

\begin{corollary}{(Evolution of purely factorized states)}
\label{Evolution_of_purely_factorized_states}

In the assumptions \eqref{Bounded_energy_per_particle} and \eqref{Asymptotic_factorization}, it is possible to take purely factorized states $\Psi_N=\phi^{\otimes N}$ for $\phi \in H^1(\Lambda)$ with $\|\phi\|_{L^2(\Lambda)}=1$. 
Thus, \eqref{Convergence_result} holds if the initial data is purely factorized.
%In this case, the result of Theorem \ref{Cubic_NLS_T3_derivation} is called \textbf{Propagation of Chaos}.
\end{corollary}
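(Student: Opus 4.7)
The plan is to verify directly that the sequence of purely factorized states $\Psi_N := \phi^{\otimes N}$ satisfies both hypotheses \eqref{Bounded_energy_per_particle} and \eqref{Asymptotic_factorization} of Theorem \ref{Cubic_NLS_T3_derivation}, after which the conclusion \eqref{Convergence_result} becomes a direct application of that theorem to this sequence.

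The asymptotic factorization condition \eqref{Asymptotic_factorization} is immediate and requires no work. Since $|\Psi_N\rangle\langle \Psi_N|=|\phi \rangle \langle \phi|^{\otimes N}$ and $\|\phi\|_{L^2(\Lambda)}=1$, the partial trace over the last $N-1$ factors gives exactly $\gamma^{(1)}_N=|\phi \rangle \langle \phi|$ for every $N$, so $\mathrm{Tr}\,\big|\gamma^{(1)}_N-|\phi \rangle \langle \phi|\big|=0$ identically.

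For the bounded-energy assumption \eqref{Bounded_energy_per_particle}, I would compute $\langle H_N \Psi_N,\Psi_N\rangle$ explicitly. Using $\|\phi\|_{L^2}=1$ and the structure of $H_N$ in \eqref{HN}, the kinetic part yields $N\|\nabla \phi\|_{L^2}^2$, while the potential part reduces to
\begin{equation}
\notag
\frac{1}{N}\sum_{\ell<j}^{N}\langle V_N(x_\ell-x_j)\Psi_N,\Psi_N\rangle=\frac{N-1}{2}\iint_{\Lambda\times\Lambda}V_N(x-y)\,|\phi(x)|^2|\phi(y)|^2\,dx\,dy,
\end{equation}
since the remaining variables integrate to one. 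For $N$ sufficiently large the support of $V_N$ as a function on $\mathbb{R}^3$ lies inside a ball contained in the fundamental domain of $\Lambda$, so its periodic extension satisfies $\|V_N\|_{L^1(\Lambda)}=\int_{\mathbb{R}^3}V(x)\,dx=1$. Applying Young's convolution inequality followed by the periodic Sobolev embedding \eqref{Sobolev_embedding_torus}, I obtain
\begin{equation}
\notag
\iint V_N(x-y)|\phi(x)|^2|\phi(y)|^2\,dx\,dy\leq \|V_N\|_{L^1}\,\|\,|\phi|^2\,\|_{L^3}\,\|\,|\phi|^2\,\|_{L^{3/2}}\lesssim \|\phi\|_{H^1}^2\,\|\phi\|_{L^3}^2 \lesssim \|\phi\|_{H^1}^4.
\end{equation}
Dividing the whole expression by $N$ gives a bound uniform in $N$, which establishes \eqref{Bounded_energy_per_particle}.

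Both hypotheses being verified, Theorem \ref{Cubic_NLS_T3_derivation} then supplies a subsequence $N_j\to\infty$ along which \eqref{Convergence_result} holds, which is exactly the content of the corollary. The only mild obstacle is the verification that $\|V_N\|_{L^1(\Lambda)}=1$ for $N$ large enough, which must use the explicit construction of $V_N$ as the periodization of a compactly supported function on $\mathbb{R}^3$ together with the scaling $V_N(x)=N^{3\beta}V(N^\beta x)$; for the finitely many small values of $N$ where the supports may overlap, one can bound $\|V_N\|_{L^1(\Lambda)}$ by an absolute constant, and these values do not affect the supremum-in-$N$ condition.
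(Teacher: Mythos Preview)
Your proof is correct. The paper itself does not give a proof of this corollary; it simply states it as an immediate consequence of Theorem~\ref{Cubic_NLS_T3_derivation}, so you have supplied the details the paper omits. Your verification of \eqref{Asymptotic_factorization} is exactly the intended one-line observation, and your energy estimate for \eqref{Bounded_energy_per_particle} via Young's inequality and the Sobolev embedding \eqref{Sobolev_embedding_torus} is clean and correct (one could alternatively bound the potential term by $\|V_N\|_{L^1}\|\phi\|_{L^4}^4$ via the AM--GM inequality $|\phi(x)|^2|\phi(y)|^2\le \tfrac12(|\phi(x)|^4+|\phi(y)|^4)$, but your route is equally valid).
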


\subsection{Approximation of the initial data}
\label{Approximation}
We would like to use Theorem \ref{EESY_Theorem}  to prove Theorem \ref{Cubic_NLS_T3_derivation}. In order to do this, we will first apply an approximation procedure to the sequence $(\Psi_N)_N$ satisfying \eqref{Bounded_energy_per_particle} and \eqref{Asymptotic_factorization} so that the initial data satisfies the assumption \eqref{Assumption_EESY} of Theorem \ref{EESY_Theorem}.
Let us choose $\zeta \in C_0^{\infty}(\mathbb{R})$ with $0 \leq \zeta \leq 1$, $\zeta \equiv 1$ on $[0,1]$ and $\zeta \equiv 0$ on $(2,+\infty)$. Finally, let us fix $\kappa>0$ to be a small parameter. 

As in \cite[Proposition 5.1]{ESY2}, \cite[Proposition 9.1]{ESY4}, and \cite[Proposition 8.1]{ESY5}, we define the following approximation to $\Psi_N$:

\begin{equation}
\notag
\widetilde{\Psi}_N^{\kappa}:=\frac{\zeta(\kappa \cdot \frac{H_N}{N})\Psi_N}{\,\,\,\,\,\,\|\zeta(\kappa \cdot \frac{H_N}{N})\Psi_N\|_{L^2(\Lambda^N)}}.
\end{equation}
In other words, we smoothly cut off the high frequencies in $\Psi_N$, and we normalize the result in $L^2(\Lambda^N)$.

The function $\widetilde{\Psi}_{N}^{\kappa} \in L^2_{sym}(\Lambda^N)$ then satisfies the following properties:

\medskip

$1)$ $\langle H_{N}^k\, \widetilde{\Psi}_{N}^{\kappa}, \widetilde{\Psi}_{N}^{\kappa} \rangle = Tr\,H_N^k \,\big|\widetilde{\Psi}_{N}^{\kappa} \rangle \langle \widetilde{\Psi}_{N}^{\kappa}\big| \leq \frac{2^k N^k}{\kappa^k}.$

\smallskip

$2)$ For some universal constant $C>0$:
\begin{equation}
\label{Approximation2}
\mathop{sup}_{N} \|\Psi_N-\widetilde{\Psi}_{N}^{\kappa}\|_{L^2(\Lambda^N)} \leq C \kappa^{\frac{1}{2}}.
\end{equation}

$3)$ We let $\widetilde{\gamma}^{(k)}_{\kappa;\,N}:=Tr_{k+1,\ldots,N} \big|\widetilde{\Psi}_{N}^{\kappa} \rangle \langle \widetilde{\Psi}_{N}^{\kappa}\big|$. For $\kappa>0$ small enough, it is then the case that:
\begin{equation}
\label{Approximation3}
\widetilde{\gamma}^{(k)}_{\kappa;\,N} \rightharpoonup^{*} |\phi \rangle \langle \phi|^{\otimes k}
\end{equation}
as $N \rightarrow \infty$ in the weak-$*$ topology on $\mathcal{L}^1_k$.

\vspace{5mm}
This approximation procedure was first used in the non-periodic setting \cite{ESY2,ESY4,ESY5}. The proofs of the analogues of points $1)$ and $2)$ in the non-periodic setting \cite[Proposition 5.1 $i), ii)$]{ESY2} and \cite[Proposition 8.1 $i),ii)$]{ESY5} carry over to the periodic setting. We note that the proof of the analogue of $1)$, given in \cite{ESY2}, relies on a Sobolev type inequality whose analogoue on $\mathbb{T}^3$ was proven in Appendix A of \cite{EESY}. The proof of the non-periodic analogue of point $3)$, given in \cite[Proposition 5.1 $iii)$]{ESY2}, which builds on the previous work of Michelangeli \cite{Michelangeli}, also carries over to the periodic setting by using an appropriate extension argument. In \cite{ESY2}, it is assumed that $\beta<\frac{2}{3}$, which holds since we are considering $\beta \in (0,\frac{3}{5})$ by \eqref{assumption_on_beta}. A proof of the non-periodic variant of $3)$ is also given in \cite[Proposition 8.1 $iii)$]{ESY5}. We will omit the details. Let us note that the same approximation argument was also used in the $2D$ periodic setting in \cite{KSS}. 

\medskip

We observe that, by $1)$, $\big|\widetilde{\Psi}_{N}^{\kappa} \rangle \langle \widetilde{\Psi}_{N}^{\kappa}\big|$ satisfies the assumption \eqref{Assumption_EESY} with implied constant $\frac{2}{\kappa}$. Let us fix $T>0$ and let $\widetilde{\Gamma}_{\kappa;\,N,t}=(\widetilde{\gamma}^{(k)}_{\kappa;\,N,t})_k$ be the solution to the BBGKY hierarchy \eqref{BBGKY} with initial data $\widetilde{\Gamma}_{\kappa;\,N,0}=(\widetilde{\gamma}^{(k)}_{\kappa;\,N})_k$ on the time interval $[0,T]$. In particular, for all $k \in \mathbb{N}$ and for all $t \in [0,T]$: \begin{equation}
\label{gamma_tilde_kappa_N_t}
\widetilde{\gamma}^{(k)}_{\kappa;\,N,t}=Tr_{k+1,\ldots,N}\,\big|\widetilde{\Psi}_{N,t}^{\kappa} \rangle \langle \widetilde{\Psi}_{N,t}^{\kappa}\big|, 
\end{equation}
where $\widetilde{\Psi}_{N,t}^{\kappa}$ is the solution of:
\begin{equation}
\notag
\begin{cases}
i\partial_t \widetilde{\Psi}_{N,t}^{\kappa}=H_N \widetilde{\Psi}_{N,t}^{\kappa}\\
\widetilde{\Psi}_{N,t}^{\kappa}\,\big|_{t=0}=\widetilde{\Psi}_{N}^{\kappa}.
\end{cases}
\end{equation}
By Theorem \ref{EESY_Theorem}, there exists a limit point $\widetilde{\Gamma}_{\kappa;\,\infty,t}=(\widetilde{\gamma}^{(k)}_{\kappa;\,\infty,t})_k$ of $(\widetilde{\Gamma}_{\kappa;\,N,t})_N$ with respect to the metric $\widehat{\varrho}$ which solves the GP hierarchy. In other words, there exists $N_j \rightarrow  \infty$ such that:
\begin{equation}
\label{Nj_infty1}
\widetilde{\Gamma}_{\kappa;\,N_j,t} \rightarrow \widetilde{\Gamma}_{\kappa;\,\infty,t} 
\end{equation}
as $j \rightarrow \infty$ with respect to the metric $\widehat{\varrho}$.

%and which satisfies ... . %Moreover, by ..., it follows that the initial data is given by ... .

Let us note that, for fixed $k \in \mathbb{N}$ and $t \in [0,T]$:

\begin{equation}
\label{trace_gamma_equal_to_1}
Tr\, \widetilde{\gamma}^{(k)}_{\kappa;\,N,t}= Tr\,\big|\widetilde{\Psi}_{N,t}^{\kappa} \rangle \langle \widetilde{\Psi}_{N,t}^{\kappa} \big|= Tr\,\big|\widetilde{\Psi}_{N}^{\kappa} \rangle \langle \widetilde{\Psi}_{N}^{\kappa} \big|= \big\|\widetilde{\Psi}_{N}^{\kappa} \big\|_{L^2(\Lambda^N)}^2=1.
\end{equation}
Here, we used the fact that $Tr\,\big|\widetilde{\Psi}_{N,t}^{\kappa} \rangle \langle \widetilde{\Psi}_{N,t}^{\kappa} \big|=\big\|\widetilde{\Psi}^{\kappa}_{N,t}\big\|_{L^2(\Lambda^N)}^2$ is conserved in time.
Furthermore, we note that, for all $(\vec{x}_k;\vec{x}'_k) \in \Lambda^k \times \Lambda^k$:
\begin{equation}
\notag
\widetilde{\gamma}^{(k)}_{\kappa;\,N,t}(\vec{x}_k;\vec{x}'_k)=\int_{\Lambda^{N-k}} \widetilde{\Psi}^{\kappa}_{N,t}(\vec{x}_k,\vec{y}_{N-k}) \cdot \overline{\widetilde{\Psi}^{\kappa}_{N,t}(\vec{x}'_k,\vec{y}_{N-k})}\,d\vec{y}_{N-k}.
\end{equation}
Hence, for all $\vec{x}_k \in \Lambda^k$: 
\begin{equation}
\notag
\big|\widetilde{\gamma}^{(k)}_{\kappa;\,N,t}(\vec{x}_k;\vec{x}_k)\big|=\widetilde{\gamma}^{(k)}_{\kappa;\,N,t}(\vec{x}_k;\vec{x}_k)=\int_{\Lambda^{N-k}} \big|\widetilde{\Psi}^{\kappa}_{N,t}(\vec{x}_k,\vec{y}_{N-k})\big|^2 \,d\vec{y}_{N-k}.
\end{equation}
In particular:
\begin{equation}
\label{trace_gamma_tilde_equal_to_1}
Tr\,\big|\widetilde{\gamma}^{(k)}_{\kappa;\,N,t}\big|=Tr\,\widetilde{\gamma}^{(k)}_{\kappa;\,N,t}=1.
\end{equation}
\subsection{A comparison of different forms of convergence}
\label{A comparison of different forms of convergence}

Let us henceforth fix $\kappa$ to be sufficiently small so that \eqref{Approximation3} holds.
We would like to use Theorem \ref{Unconditional_uniqueness} and deduce that $\widetilde{\gamma}^{(k)}_{\kappa;\,\infty,t}=\big|S_t(\phi) \rangle \langle S_t(\phi)\big|^{\otimes k}$ for all $k \in \mathbb{N}$ and for all $t \in [0,T]$. 
We can not directly apply Theorem \ref{Unconditional_uniqueness} since the convergence in \eqref{Nj_infty1} is given in terms of the metric $\widehat{\varrho}$, and in the assumptions of the theorem, the convergence of each component is given with respect to the weak-$*$ topology on $\mathcal{L}^1_k$.

The arguments in \cite[Section 4.1]{EESY} imply that the sequence $\big(\widetilde{\Gamma}_{\kappa;\,N,t}\big)_N$ satisfies the following equicontinuity result:

%is equicontinuous with respect to the metric $\rho$. In particular, it is possible to deduce the following equivalent claim:

For all $k \geq 1$ and for all $\mathcal{Z}^{(k)} \in W^{1,\infty}(\Lambda^k \times \Lambda^k)$, it is the case that, for every $\epsilon>0$, there exists $\delta>0$ such that:

\begin{equation}
\label{equicontinuity1}
\Big|\big\langle \mathcal{Z}^{(k)}, \widetilde{\gamma}^{(k)}_{\kappa;\,N,t}-\widetilde{\gamma}^{(k)}_{\kappa;\,N,s}\big\rangle_{L^2(\Lambda^k \times \Lambda^k)}\Big| \leq \epsilon,
\end{equation}
for all $s,t \in [0,T]$ with $|s-t| \leq \delta.$ The quantity $\delta$ depends on $k$ and $\mathcal{Z}^{(k)}$, but is independent of $N$. 

We would like to prove an equicontinuity result of the type \eqref{equicontinuity1} in the weak-$*$ topology on $\mathcal{L}^1_k$. In order to do this, we  first prove the following:

\begin{lemma}
\label{Density}
Let us fix $k \in \mathbb{N}$. Then, integral operators on $L^2(\Lambda^k)$ with kernels in $W^{1,\infty}(\Lambda^k \times \Lambda^k)$ are dense in $\mathcal{K}_k$ with respect to the operator norm topology.
\end{lemma}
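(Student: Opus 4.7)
The plan is to use two standard density results in succession: finite-rank operators are dense in the compact operators in operator norm, and smooth functions are dense in $L^2$ on the torus.

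First, I would recall that $\mathcal{K}_k$ is the operator-norm closure of the finite-rank operators on $L^2(\Lambda^k)$. So given $K\in\mathcal{K}_k$ and $\varepsilon>0$, it suffices to approximate $K$ first by a finite rank operator $K_n = \sum_{j=1}^{n} |f_j\rangle\langle g_j|$ with $f_j,g_j \in L^2(\Lambda^k)$, to within $\varepsilon/2$ in operator norm, and then approximate this finite-rank operator in operator norm by one whose kernel is in $W^{1,\infty}(\Lambda^k\times\Lambda^k)$.

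For the second step, since $\Lambda^k=\mathbb{T}^{3k}$ is compact, $C^{\infty}(\Lambda^k)\subset W^{1,\infty}(\Lambda^k)$, and $C^{\infty}(\Lambda^k)$ is dense in $L^2(\Lambda^k)$. Choose $\tilde f_j,\tilde g_j\in C^{\infty}(\Lambda^k)$ such that $\|f_j-\tilde f_j\|_{L^2}$ and $\|g_j-\tilde g_j\|_{L^2}$ are small, and define $\widetilde K_n = \sum_{j=1}^{n} |\tilde f_j\rangle\langle \tilde g_j|$. The corresponding integral kernel is
\begin{equation}
\notag
\widetilde{K}_n(\vec{x};\vec{x}') = \sum_{j=1}^{n} \tilde f_j(\vec{x}) \, \overline{\tilde g_j(\vec{x}')},
\end{equation}
which lies in $C^{\infty}(\Lambda^k\times\Lambda^k)\subset W^{1,\infty}(\Lambda^k\times\Lambda^k)$. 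Using the elementary bound $\| |f\rangle\langle g| \|_{op} \le \|f\|_{L^2}\|g\|_{L^2}$, together with the triangle inequality
\begin{equation}
\notag
\|K_n - \widetilde K_n\|_{op} \le \sum_{j=1}^{n} \Bigl( \|f_j-\tilde f_j\|_{L^2}\|g_j\|_{L^2} + \|\tilde f_j\|_{L^2}\|g_j-\tilde g_j\|_{L^2}\Bigr),
\end{equation}
one can make $\|K_n-\widetilde K_n\|_{op}<\varepsilon/2$ by taking the smooth approximants sufficiently close. Combining yields $\|K-\widetilde K_n\|_{op}<\varepsilon$.

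There is no genuine obstacle here: the result is a soft functional-analytic statement and both density inputs are standard (finite-rank density in $\mathcal{K}_k$ from the spectral theorem for compact operators, and $C^\infty$-density in $L^2$ on the torus). The only minor point to note is that one should verify that the smooth kernels genuinely lie in $W^{1,\infty}$, which is immediate on the compact domain $\Lambda^k\times\Lambda^k$.
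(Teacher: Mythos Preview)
Your proof is correct and follows essentially the same approach as the paper: approximate first by a finite-rank operator, then approximate the kernel by something in $W^{1,\infty}$. The only cosmetic difference is that the paper approximates the full finite-rank kernel $K_1\in L^2(\Lambda^k\times\Lambda^k)$ directly by a $W^{1,\infty}$ function (via Fourier truncation) and bounds the operator norm by the $L^2$ norm of the kernel, whereas you approximate each tensor factor $f_j,g_j$ separately in $L^2(\Lambda^k)$ and use the rank-one bound $\||f\rangle\langle g|\|\le\|f\|_{L^2}\|g\|_{L^2}$; both routes are standard and equivalent.
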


\begin{proof}
Suppose that $\mathcal{J}^{(k)} \in \mathcal{K}_k$. Let $\epsilon>0$ be given. We can then find a finite-rank operator $T_1$ such that $\|\mathcal{J}^{(k)}-T_1\|<\frac{\epsilon}{2}$. The operator $T_1$ will have a kernel of the form: $K_1(\vec{x}_k;\vec{x}'_k)=\sum_{j=1}^{M} \alpha_j \cdot f_j(\vec{x}_k) \cdot g_j(\vec{x}'_k)$ for some $M \in \mathbb{N}, \alpha_1,\alpha_2, \ldots, \alpha_M \in \mathbb{C}$, and for some $f_1,\ldots,f_M,g_1,\ldots,g_M \in L^2(\Lambda^k)$. In particular, $K_1 \in L^2(\Lambda^k \times \Lambda^k)$. Since $W^{1,\infty}(\Lambda^k \times \Lambda^k)$ is dense in $L^2(\Lambda^k \times \Lambda^k)$ with respect to $\|\cdot\|_{L^2(\Lambda^k \times \Lambda^k)}$ (we can see this by truncating Fourier series), it follows that there exists $\mathcal{Z}^{(k)} \in W^{1,\infty}(\Lambda^k \times \Lambda^k)$ such that $\|\mathcal{Z}^{(k)}-K_1\|_{L^2(\Lambda^k \times \Lambda^k)}<\frac{\epsilon}{2}$. We denote the integral operator associated with $\mathcal{Z}^{(k)}$ by $T$. It follows that $\|T-T_1\|=\|\mathcal{Z}^{(k)}-K_1\|_{L^2(\Lambda^k \times \Lambda^k)}< \frac{\epsilon}{2}$ and hence $\|\mathcal{J}^{(k)}-T\| \leq \|\mathcal{J}^{(k)}-T_1\|+\|T-T_1\|<\epsilon.$ The density result now follows.
\end{proof}

Let us return to the proof of the equicontinuity result in the weak-$*$ topology on $\mathcal{L}^1_k$. Suppose that $\mathcal{J}^{(k)} \in \mathcal{L}^1_k$. Let $\epsilon>0$ be given. Then, by Lemma \ref{Density}, we can find $\mathcal{J}^{(k)}_1$, an integral operator with kernel $\mathcal{Z}^{(k)} \in W^{1,\infty}(\Lambda^k \times \Lambda^k)$ such that $\|\mathcal{J}^{(k)}-\mathcal{J}^{(k)}_1\|\leq \frac{\epsilon}{4}$. In particular, for all $s,t \in [0,T]$, it is the case that:

\begin{equation}
\notag
\big|Tr\, (\mathcal{J}^{(k)}-\mathcal{J}^{(k)}_1) (\widetilde{\gamma}^{(k)}_{\kappa;\,N,t}-\widetilde{\gamma}^{(k)}_{\kappa;\,N,s})\big| \leq \big|Tr\,(\mathcal{J}^{(k)}-\mathcal{J}^{(k)}_1) (\widetilde{\gamma}^{(k)}_{\kappa;\,N,t})\big|+\big|Tr\,(\mathcal{J}^{(k)}-\mathcal{J}^{(k)}_1) (\widetilde{\gamma}^{(k)}_{\kappa;\,N,s})|
\end{equation}
\begin{equation}
\notag
\leq \|\mathcal{J}^{(k)}-\mathcal{J}^{(k)}_1\| \cdot Tr\, \big|\widetilde{\gamma}^{(k)}_{\kappa;\,N,t}\big|+\|\mathcal{J}^{(k)}-\mathcal{J}^{(k)}_1\| \cdot Tr\, \big|\widetilde{\gamma}^{(k)}_{\kappa;\,N,s}\big| \leq \frac{\epsilon}{4}+\frac{\epsilon}{4}=\frac{\epsilon}{2}.
\end{equation}
In the last line, we used the duality pairing \eqref{duality_pairing} as well as \eqref{trace_gamma_tilde_equal_to_1}.

Furthermore, we notice that:
\begin{equation}
\notag
Tr \, \mathcal{J}^{(k)}_1 (\widetilde{\gamma}^{(k)}_{\kappa;\,N,t}-\widetilde{\gamma}^{(k)}_{\kappa;\,N,s})=\big \langle \mathcal{Z}^{(k)}, (\widetilde{\gamma}^{(k)}_{\kappa;\,N,t}-\widetilde{\gamma}^{(k)}_{\kappa;\,N,s}) \big \rangle_{L^2(\Lambda^k \times \Lambda^k)}.
\end{equation}
Hence, from \eqref{equicontinuity1}, it follows that there exists $\delta>0$ (which depends on $k$ and $\mathcal{J}^{(k)}_1$) such that for all $s,t \in [0,T]$ with $|s-t| \leq \delta$:
\begin{equation}
\notag
\big|Tr \, \mathcal{J}^{(k)}_1 (\widetilde{\gamma}^{(k)}_{\kappa;\,N,t}-\widetilde{\gamma}^{(k)}_{\kappa;\,N,s})\big| \leq \frac{\epsilon}{2}.
\end{equation}
In particular, by the triangle inequality, it follows that for all $s,t \in [0,T]$ with $|s-t| \leq \delta$:
\begin{equation}
\label{equicontinuity2}
\big|Tr \, \mathcal{J}^{(k)} (\widetilde{\gamma}^{(k)}_{\kappa;\,N,t}-\widetilde{\gamma}^{(k)}_{\kappa;\,N,s})\big| \leq \epsilon.
\end{equation}

\begin{remark}
Let us note that similar density arguments were used in order to obtain equicontinuity results in the periodic setting in \cite[Lemma 9.2]{ESY1} the non-periodic setting in \cite[Lemma 7.2]{ESY2}, \cite[Lemma 6.2]{ESY4}, \cite[Lemma 6.2]{ESY5}.
\end{remark}

Let us recall from \eqref{duality} that $\mathcal{L}^1_k= \mathcal{K}_k^{*}$. Since $\mathcal{K}_k$ is separable, there exists a countable dense subset of the unit ball of $\mathcal{K}_k$ given by $\{\mathcal{J}_{c,\ell}^{(k)},\,\ell \in \mathbb{N}\}.$ As in \cite{ESY2,ESY4,ESY5}, using the operators $\mathcal{J}_{c,\ell}^{(k)}$, it is possible to define a metric on $\mathcal{L}^1_k$ as:

\begin{equation}
\label{eta_k}
\eta_k(\gamma_1^{(k)},\gamma_2^{(k)}):=\sum_{\ell=1}^{+\infty}2^{-\ell} \cdot \big|Tr\,\mathcal{J}_{c,\ell}^{(k)}(\gamma_1^{(k)}-\gamma_2^{(k)})\big|.
\end{equation} 
The topology induced by the metric $\eta_k$ and the weak-$*$ topology are equivalent on the unit ball of $\mathcal{L}^1_k$, c.f. \cite[Theorem 3.16]{Rudin}. 

Let $C([0,T],\mathcal{L}^1_k)$ denote the space of all functions from $[0,T]$ which take values in $\mathcal{L}^1_k$ and which are continuous with respect to the topology given by the metric $\eta_k$ given in \eqref{eta_k}. On $C([0,T],\mathcal{L}^1_k)$, one defines the metric:
\begin{equation}
\label{etahat_k}
\widehat{\eta}_k(\gamma_1^{(k)}(\cdot),\gamma_2^{(k)}(\cdot)):=\mathop{sup}_{t \in [0,T]} \eta_k\,(\gamma_1^{(k)}(t),\gamma_2^{(k)}(t)).
\end{equation}
Finally, one defines $\tau_{prod}$ to be the product metric on $\bigoplus_{k \in \mathbb{N}} C([0,T],\mathcal{L}_1^k)$ obtained from the metric $\widehat{\eta}_k$ defined in \eqref{etahat_k}.

\subsection{Proof of Theorem \ref{Cubic_NLS_T3_derivation}}
\label{Proof of Theorem 2}

We will now use the above facts and prove Theorem \ref{Cubic_NLS_T3_derivation}:

\begin{proof}
It follows from \eqref{equicontinuity2} that the sequence $\big(\widetilde{\gamma}_{\kappa;\,N,t}^{(k)}\big)_N$ is equicontinuous with respect to $\eta_k$.
More precisely, given $\epsilon>0$, we find $M>1$ such that $\sum_{\ell>M} 2^{-\ell} <\frac{\epsilon}{4}.$ We note that $$\big|Tr \, \mathcal{J}_{c,\ell}^{(k)} (\widetilde{\gamma}^{(k)}_{\kappa;\,N,t}-\widetilde{\gamma}^{(k)}_{\kappa;\,N,s})\big| \leq \|\mathcal{J}_{c,\ell}^{(k)}\| \cdot  Tr \big| \widetilde{\gamma}^{(k)}_{\kappa;\,N,t}-\widetilde{\gamma}^{(k)}_{\kappa;\,N,s}\big| \leq 2.$$
Here, we used the triangle inequality and \eqref{trace_gamma_tilde_equal_to_1}.
Hence: 
\begin{equation}
\notag
\eta_k(\widetilde{\gamma}^{(k)}_{\kappa;\,N,t},\widetilde{\gamma}^{(k)}_{\kappa;\,N,s}) \leq \sum_{\ell=1}^{M} 2^{-\ell} \cdot \big|Tr\,\mathcal{J}_{c,\ell}^{(k)}(\widetilde{\gamma}^{(k)}_{\kappa;\,N,t}-\widetilde{\gamma}^{(k)}_{\kappa;\,N,s})\big| + \frac{\epsilon}{2},
\end{equation}
which can be made $\leq \epsilon$ by using \eqref{equicontinuity2} if we choose $|t-s|$ to be sufficiently small.
By the Arzel\`{a}-Ascoli theorem, it follows that the sequence $(\widetilde{\gamma}^{(k)}_{\kappa;\,N,t})_N$ is relatively compact with respect to $\widehat{\eta}_k$. By using an additional diagonal argument, it follows that $\big(\widetilde{\Gamma}_{\kappa;\,N,t}\big)_N$ is relatively compact with respect to $\tau_{prod}$. In particular, it follows that there exists $\overline{\Gamma}_{\kappa;\,\infty,t}=(\overline{\gamma}^{(k)}_{\kappa;\,\infty,t})_k \in \bigoplus_{k \in \mathbb{N}} C([0,T],\mathcal{L}_1^k)$ and $N_j \rightarrow \infty$ such that:
\begin{equation}
\label{Nj_infty2}
\widetilde{\Gamma}_{\kappa;\,N_j,t} \rightarrow \overline{\Gamma}_{\kappa;\,\infty,t} 
\end{equation}
as $j \rightarrow \infty$ with respect to $\tau_{prod}$. Let us note that $``\,\overline{\,\cdot\,}\,"$ in the above notation does not mean a complex conjugate. By taking subsequences, we can arrange for the $N_j$ in \eqref{Nj_infty1} and \eqref{Nj_infty2} to be the same. Let us note that the sequence $(N_j)_j$ depends on $\kappa$.

We now want to show that, for all $t \in [0,T]$:
\begin{equation}
\label{Gamma_tilde_bar}
\widetilde{\Gamma}_{\kappa;\,\infty,t}=\overline{\Gamma}_{\kappa;\,\infty,t}
\end{equation}
as elements of $L^2(\Lambda^k \times \Lambda^k)$.
Let us fix $k \in \mathbb{N}$ and $t \in [0,T]$.
Suppose that $F \in L^2(\Lambda^k \times \Lambda^k)$. The integral operator $T$ with kernel $F$ is Hilbert-Schmidt and hence is compact, i.e it belongs to $\mathcal{K}_k$. Since $\widetilde{\gamma}^{(k)}_{\kappa;\,N_j,t} \rightharpoonup^{*} \overline{\gamma}^{(k)}_{\kappa;\,\infty,t}$ in the weak-$*$ topology on $\mathcal{L}^1_k$ as $j \rightarrow \infty$, it follows from \eqref{duality_pairing} that:
\begin{equation}
\notag
Tr\, \big(T\, \widetilde{\gamma}^{(k)}_{\kappa;\,N_j,t} \big) \rightarrow Tr\, \big(T\, \overline{\gamma}^{(k)}_{\kappa;\,\infty,t} \big)
\end{equation}
as $j \rightarrow \infty$, which can be rewritten as:
\begin{equation}
\mathop{\int}_{\Lambda^k \times \Lambda^k} F(\vec{x}_k;\vec{x}'_k) \cdot \widetilde{\gamma}^{(k)}_{\kappa;\,N_j,t}(\vec{x}_k;\vec{x}'_k) \,d \vec{x}_k \, d\vec{x}'_k
\end{equation}
\begin{equation}
\notag
\rightarrow \mathop{\int}_{\Lambda^k \times \Lambda^k} F(\vec{x}_k;\vec{x}'_k) \cdot \overline{\gamma}^{(k)}_{\kappa;\,\infty,t}(\vec{x}_k;\vec{x}'_k) \,d \vec{x}_k \, d\vec{x}'_k.
\end{equation}
In particular:
\begin{equation}
\notag
\widetilde{\gamma}^{(k)}_{\kappa;\,N_j,t} 
\rightharpoonup
\overline{\gamma}^{(k)}_{\kappa;\,\infty,t}
\end{equation}
weakly in $L^2(\Lambda^k \times \Lambda^k)$ as $j \rightarrow \infty$.

Let us observe that \eqref{Nj_infty1} implies that:
\begin{equation}
\notag
\widetilde{\gamma}^{(k)}_{\kappa;\,N,t} \rightarrow \widetilde{\gamma}^{(k)}_{\kappa;\,\infty,t}
\end{equation}
strongly in $L^2(\Lambda^k \times \Lambda^k)$ as $j \rightarrow \infty$. In particular, $(\widetilde{\gamma}^{(k)}_{\kappa;\,N,t})_k$ also converges to $\widetilde{\gamma}^{(k)}_{\kappa;\,\infty,t}$ weakly in $L^2(\Lambda^k \times \Lambda^k)$.

By the uniqueness of weak limits, it follows that $\widetilde{\gamma}^{(k)}_{\kappa;\,\infty,t}=\overline{\gamma}^{(k)}_{\kappa;\,\infty,t}$ as elements of $L^2(\Lambda^k \times \Lambda^k)$. In particular, we can deduce \eqref{Gamma_tilde_bar}. We will henceforth write the limit as $\widetilde{\Gamma}_{\kappa;\,\infty,t}$.

To summarize, we know from Theorem \ref{EESY_Theorem} that $\widetilde{\Gamma}_{\kappa;\,\infty,t}$ is a mild solution of the Gross-Pitaevskii hierarchy which belongs to $L^{\infty}_{t \in [0,T]}\mathfrak{H}^1$.
Let us note that, by construction:
\begin{equation}
\notag
\widetilde{\gamma}^{(k)}_{\kappa;\,N,t}=Tr_{k+1,\ldots,N}\,\big|\widetilde{\Psi}_{N,t}^{\kappa} \rangle \langle \widetilde{\Psi}_{N,t}^{\kappa} \big| \geq 0
\end{equation}
when considered as an operator on $L^2(\Lambda^k)$. Furthermore, $\widetilde{\gamma}^{(k)}_{\kappa;\,N,t}$ is bounded and self-adjoint on $L^2(\Lambda^k)$.
We recall from \eqref{trace_gamma_equal_to_1} that $Tr\,\widetilde{\gamma}^{(k)}_{\kappa;\,N,t}=1$. Finally, we note that for all $k \in \mathbb{N}$ and for all $t \in [0,T]$, it is the case that $\widetilde{\gamma}^{(k)}_{\kappa;\,N_j,t} \rightharpoonup^{*}
\widetilde{\gamma}^{(k)}_{\kappa;\,\infty,t}
$ as $j \rightarrow \infty$ in the weak-$*$ topology on $\mathcal{L}^1_k$.
By \eqref{Approximation3}, we can deduce that, for all $k \in \mathbb{N}$, it is the case that $\widetilde{\gamma}^{(k)}_{\kappa;\,\infty,0}=|\phi \rangle \langle \phi|^{\otimes k}$.

In particular, it follows that $\widetilde{\Gamma}_{\kappa;\,\infty,t}$ is a mild solution of the Gross-Pitaevskii hierarchy with initial data given by $\widetilde{\Gamma}_{\kappa,\infty,0}=(|\phi \rangle \langle \phi|^{\otimes k})_k$, which satisfies the assumptions of Theorem \ref{Unconditional_uniqueness}. We can now apply Theorem \ref{Unconditional_uniqueness} and deduce that:

\begin{equation}
\label{Gamma_tilde_infty}
\widetilde{\Gamma}_{\kappa;\,\infty,t}=\big(|S_t(\phi) \rangle \langle S_t(\phi)|^{\otimes k}\big)_k.
\end{equation}

Consequently:

\begin{equation}
\label{weak_star_convergence1}
\widetilde{\gamma}^{(k)}_{\kappa;\,N_j,t} \rightharpoonup^{*} |S(t)\phi \rangle \langle S(t)\phi|^{\otimes k}
\end{equation}
as $j \rightarrow \infty$ in the weak-$*$ topology on $\mathcal{L}^1_k$.

Let us note that by \eqref{trace_gamma_tilde_equal_to_1} and the fact that $\|\phi\|_{L^2}=1$:
\begin{equation}
\label{weak_star_convergence_norm}
Tr \,\big|\widetilde{\gamma}^{(k)}_{\kappa;\,N_j,t}\big|=Tr\,\big||S(t)\phi \rangle \langle S(t) \phi|^{\otimes k}\big|=1.
\end{equation}

We observe that \eqref{weak_star_convergence1} and \eqref{weak_star_convergence_norm} imply that:
\begin{equation}
\label{gamma_tilde_strong_convergence}
Tr\,\Big|\widetilde{\gamma}^{(k)}_{\kappa;\,N_j,t} - |S(t)\phi \rangle \langle S(t)\phi|^{\otimes k} \Big|\rightarrow 0
\end{equation}
as $j \rightarrow \infty$. 
In other words, weak-$*$ convergence in $\mathcal{L}^1_k$ and convergence of the norms implies strong convergence in $\mathcal{L}^1_k$. This fact was already noted in \cite[Proposition 9.1 $iii)$]{ESY4}. It follows from a more general fact about convergence in the trace class proved by Arazy \cite{Arazy}. For related results, we refer the reader to \cite{Simon1}, \cite[Addendum H]{Simon2}, and \cite[Corollary 2.4]{LewinNamRougerie}, as well as \cite{Dell'Antonio} and \cite{Robinson}.
Let us note that, in \eqref{gamma_tilde_strong_convergence}, the sequence $(N_j)_j$ can be taken to be independent of $k \in \mathbb{N}$ and $t \in [0,T]$. We recall that $(N_j)_j$ does depend on $\kappa$.

Let us now prove an analogue of \eqref{gamma_tilde_strong_convergence} which is obtained by starting from the original  functions $\Psi_N$. In other words, we start from $(\Psi_N)_N \in \bigoplus_{N \in \mathbb{N}} L^2(\Lambda^N)$ which satisfies \eqref{Bounded_energy_per_particle} and \eqref{Asymptotic_factorization}, and we consider:
\begin{equation}
\notag
\gamma^{(k)}_{N,t}:=Tr_{k+1,\ldots,N} \,\big|\Psi_{N,t} \rangle \langle \Psi_{N,t}\big|
\end{equation}
as in \eqref{gammakNt_original}.

We want to estimate $Tr\,\big|\gamma^{(k)}_{N,t}-\widetilde{\gamma}^{(k)}_{\kappa;\,N,t}\big|$ for $\widetilde{\gamma}^{(k)}_{\kappa;\,N,t}$ as in \eqref{gamma_tilde_kappa_N_t}. Let us note that:

\begin{equation}
\notag
Tr\,\big|\gamma^{(k)}_{N,t}-\widetilde{\gamma}^{(k)}_{\kappa;\,N,t}\big|=Tr_{1,2\ldots,k}\,\Big|Tr_{k+1,\ldots,N} \big|\Psi_{N,t} \rangle \langle \Psi_{N,t}\big|-Tr_{k+1,\ldots,N}\,\big|\widetilde{\Psi}_{\kappa;\,N,t} \rangle \langle \widetilde{\Psi}_{\kappa;\,N,t}\big|\Big|
\end{equation}
\begin{equation}
\notag
\leq Tr\,\Big|\big|\Psi_{N,t} \rangle \langle \Psi_{N,t}\big|-\big|\widetilde{\Psi}_{\kappa;\,N,t} \rangle \langle \widetilde{\Psi}_{\kappa;\,N,t}\big|\Big|
\end{equation}
\begin{equation}
\notag
\leq Tr\,\Big|\big|\Psi_{N,t} \rangle \langle (\Psi_{N,t}-\widetilde{\Psi}_{\kappa;\,N,t})\big|\Big| + Tr\,\Big|\big|(\Psi_{N,t}-\widetilde{\Psi}_{\kappa;\,N,t}) \rangle \langle \Psi_{N,t} \big|\Big| + Tr\,\Big|\big|(\Psi_{N,t}-\widetilde{\Psi}_{\kappa;\,N,t}) \rangle \langle (\Psi_{N,t}-\widetilde{\Psi}_{\kappa;\,N,t})\big|\Big|
\end{equation}
\begin{equation}
\notag
\leq 2\|\Psi_{N,t}\|_{L^2(\Lambda^N)} \cdot \|\Psi_{N,t}-\widetilde{\Psi}_{\kappa;\,N,t}\|_{L^2(\Lambda^N)}+\|\Psi_{N,t}-\widetilde{\Psi}_{\kappa;\,N,t}\|_{L^2(\Lambda^N)}^2
\end{equation}
\begin{equation}
\label{gamma_gamma_tilde}
= 2\|\Psi_N\|_{L^2(\Lambda^N)} \cdot \|\Psi_N-\widetilde{\Psi}_{\kappa;\,N}\|_{L^2(\Lambda^N)}+\|\Psi_N-\widetilde{\Psi}_{\kappa;\,N}\|_{L^2(\Lambda^N)}^2.
\end{equation}
Above, we used the triangle inequality, the Cauchy-Schwarz inequality and the fact that $\|\Psi_{N,t}\|_{L^2(\Lambda^N)}$ and $\|\Psi_{N,t}-\widetilde{\Psi}_{\kappa;\,N,t}\|_{L^2(\Lambda^N)}$ are conserved in time. We note that \eqref{Asymptotic_factorization} implies that $\|\Psi_N\|_{L^2(\Lambda^N)} \rightarrow 1$ as $N \rightarrow \infty$. In particular, $(\|\Psi_N\|_{L^2(\Lambda^N)})_N$ is bounded. By using \eqref{Approximation2} and the fact that $\|\Psi_N\|_{L^2(\Lambda^N)} \lesssim 1$, it follows that the expression in \eqref{gamma_gamma_tilde} is:
\begin{equation}
\label{gamma_gamma_tilde2}
\leq C' \cdot \kappa^{\frac{1}{2}}
\end{equation}
for some $C'>0$, which is independent of $k,N$ and $\kappa$.

Let us recall that, the sequence $(N_j)_j$ for which \eqref{gamma_tilde_strong_convergence} holds, depends on $\kappa$. We will now emphasize this by writing $N_j=N_j(\kappa)$.
Consequently, by \eqref{gamma_gamma_tilde2}, we note that:

\begin{equation}
\notag
Tr\,\Big|\gamma^{(k)}_{N_j(\kappa),t} - |S(t)\phi \rangle \langle S(t)\phi|^{\otimes k} \Big| \leq Tr\,\Big|\widetilde{\gamma}^{(k)}_{\kappa;\,N_j(\kappa),t} - |S(t)\phi \rangle \langle S(t)\phi|^{\otimes k} \Big|+ Tr\, \Big|\gamma^{(k)}_{N_j(\kappa),t}-\widetilde{\gamma}^{(k)}_{\kappa;\,N_j(\kappa),t}\Big|
\end{equation}
\begin{equation}
\notag
\leq 
Tr\,\Big|\widetilde{\gamma}^{(k)}_{\kappa;\,N_j(\kappa),t} - |S(t)\phi \rangle \langle S(t)\phi|^{\otimes k} \Big|+C' \cdot \kappa^{\frac{1}{2}}
\end{equation}
We take a sequence $\kappa_n:=\frac{1}{n}$, and we apply \eqref{gamma_tilde_strong_convergence} and a diagonal argument in order to deduce that there exists a new sequence of positive integers tending to infinity, which is again denoted by $(N_j)_j$, which is independent of $k$ and $t$, and has the property that:
\begin{equation}
\label{gamma_strong_convergence}
Tr\,\Big| \gamma^{(k)}_{N_j,t}-|S(t)\phi \rangle \langle S(t)\phi|^{\otimes k}\Big| \rightarrow 0
\end{equation}
as $j \rightarrow \infty$. The theorem now follows from \eqref{gamma_strong_convergence}.
\end{proof}

\begin{remark}
\label{place_where_we_use_uniqueness}
We note that the place where we applied the uniqueness result from Theorem \ref{Unconditional_uniqueness} is in \eqref{Gamma_tilde_infty} above in order to deduce that $\widetilde{\Gamma}_{\kappa;\,\infty,t}$ is uniquely determined. In general, we can deduce that $\Gamma_{\infty,t}$ in Theorem \ref{EESY_Theorem} is uniquely determined provided that it satisfies the assumptions of Theorem \ref{Unconditional_uniqueness}. As we saw above, this was true in the intermediate step \eqref{Gamma_tilde_infty} of deriving the defocusing cubic nonlinear Schr\"{o}dinger equation on $\mathbb{T}^3$.
\end{remark}


\begin{thebibliography}
{References:}
\bibitem{ABGT} R. Adami, C. Bardos, F. Golse, A. Teta, \emph{Towards a rigorous derivation of the cubic nonlinear Schr\"{o}dinger equation in dimension one}, Asymptot. Anal. \textbf{40} (2004), no.2, 93-108.
\bibitem{AGT} R. Adami, F. Golse, A. Teta, \emph{Rigorous derivation of the cubic NLS in dimension one}, J. Stat. Phys. \textbf{127} (2007), no. 6, 1193--1220.
\bibitem{ALSSY1} M. Aizenman, E.H. Lieb, R. Seiringer, J. P. Solovej, J. Yngvason, \emph{Bose-Einstein quantum phase transition in an optical lattice model}, Phys. Rev. A \textbf{70}, 023612 (2004).
\bibitem{ALSSY2} M. Aizenman, E.H. Lieb, R. Seiringer, J. P. Solovej, J. Yngvason, \emph{Bose-Einstein Condensation as a Quantum Phase Transition in an Optical Lattice}, Mathematical Physics of Quantum Mechanics,
Lecture Notes in Physics, Vol. \textbf{690}  (2006), pp 199--215.
\bibitem{Ammari} Z. Ammari, \emph{Syst\`{e}mes hamiltoniens en th\'{e}orie quantique des champs: dynamique asymptotique et limite classique}, Habilitation \`{a} Diriger des Recherches, University of Rennes I, February 2013.
\bibitem{AmmariNier1} Z. Ammari, F. Nier, \emph{Mean field limit for bosons and infinite dimensional phase-space analysis}, Ann. H. Poincar\'{e} \textbf{9}, 1503--1574 (2008).
\bibitem{AmmariNier2} Z. Ammari, F. Nier, \emph{Mean field propagation of Wigner measures and BBGKY hierarchies for general bosonic states}, J. Math. Pures Appl. \textbf{95}, 585--626 (2011).
\bibitem{Anapolitanos} I. Anapolitanos, \emph{Rate of Convergence towards the Hartree-von Neumann Limit in the Mean-Field Regime}, Lett. Math. Phys., Oct. 2011, Vol. 98, no. 1, 1--31.
\bibitem{CW} M. H. Anderson, J. R. Ensher, M. R. Matthews, C. E. Wieman, and E. A. Cornell, \emph{Observations of Bose-Einstein condensation in a dilute atomic vapor}, Science \textbf{269} (1995), 198--201.
\bibitem{Arazy} J. Arazy, \emph{More on convergence in unitary metric spaces}, Proc. Amer. Math. Soc. \textbf{83} (1981), 44-48.
\bibitem{BGM} C. Bardos, F. Golse, N. Mauser, \emph{Weak coupling limit of the $N$-particle Schr\"{o}dinger equation}, Methods. Appl. Anal. \textbf{7} (2000), 275--293.
\bibitem{BePoSc1} N. Benedikter, M. Porta, B. Schlein, \emph{Mean-field Evolution of Fermionic Systems}, preprint (2013), \texttt{http://arxiv.org/abs/1305.2768}.
\bibitem{BePoSc2} N. Benedikter, M. Porta, B. Schlein, \emph{Mean-Field Dynamics of Fermions with Relativistic Dispersion}, preprint (2013), \texttt{http://arxiv.org/abs/1311.6270}.
\bibitem{BdOS} N. Benedikter, G. de Oliveira, B. Schlein, \emph{Quantitative derivation of the Gross-Pitaevskii equation},
preprint (2012), \texttt{http://arxiv.org/abs/1208.0373}
\bibitem{Bose} S. N. Bose, \emph{Plancks Gesetz und Lichtquantenhypothese}, Zeitschrift f\"{u}r Physik, \textbf{26} (1924), 178. 
\bibitem{B} J. Bourgain, \emph{Fourier transform restriction phenomena for certain lattice subsets and applications
to nonlinear evolution equations, I: Schr\"{o}dinger equations}, Geom. Funct. Anal. \textbf{3} (1993), 107--156.
\bibitem{B2} J. Bourgain, \emph{Periodic nonlinear Schr\"{o}dinger equation and invariant measures}, Comm. Math. Phys. \textbf{166} (1994), no.1, 1--26.
\bibitem{BT1} N. Burq, N. Tzvetkov, \emph{Random data Cauchy theory for supercritical wave equations. I. Local theory}, Invent. Math. \textbf{173} (2008), no.3, 449-475.
\bibitem{ChenLeeSchlein} L. Chen, J.O. Lee, B. Schlein, \emph{Rate of convergence towards Hartree dynamics}, J. Stat. Phys. \textbf{144} (2011), no.4, 872--903.
\bibitem{ChHaPavSei} T. Chen, C. Hainzl, N. Pavlovi\'{c}, R. Seiringer, \emph{Unconditional uniqueness for the cubic Gross-Pitaevskii hierarchy via quantum de Finetti}, preprint (2013),\\ \texttt{http://arxiv.org/abs/1307.3168}.
\bibitem{ChHaPavSei2} T. Chen, C. Hainzl, N. Pavlovi\'{c}, R. Seiringer, \emph{On the well-posedness and scattering for the Gross-Pitaevskii hierarchy via quantum de Finetti},
preprint (2013),\\ \texttt{http://arxiv.org/abs/1311.2136}.
\bibitem{CP1} T. Chen, N. Pavlovi\'{c}, \emph{On the Cauchy problem for focusing and defocusing Gross-Pitaevskii hiearchies}, Discr. Contin. Dyn. Syst. \textbf{27} (2010), no. 2, 715--739.
\bibitem{CP_survey_article} T. Chen, N. Pavlovi\'{c}, \emph{Recent results on the Cauchy problem for focusing and defocusing Gross-Pitaevskii hierarchies}, Math. Model. Nat. Phenom. \textbf{5} (2010), no. 4, 54--72.
\bibitem{CP2} T. Chen, N. Pavlovi\'{c}, \emph{The quintic NLS as the mean field limit of a Boson gas with three-body interactions}, J. Funct. Anal. \textbf{260} (2011), no. 4, 959--997.
\bibitem{CP4} T. Chen, N. Pavlovi\'{c}, \emph{A new proof of existence of solutions for focusing and defocusing Gross-Pitaevskii hierarchies}, 
Proc. Amer. Math. Soc. \textbf{141} (2013), no. 1, 279Ð-293.
\bibitem{CP3} T. Chen, N. Pavlovi\'{c}, \emph{Higher order energy conservation and global well-posedness for Gross-Pitaevskii hierarchies}, preprint (2009), \texttt{http://arxiv.org/abs/1111.6222}.
\bibitem{CP} T. Chen, N. Pavlovi\'{c}, \emph{Derivation of the cubic NLS and Gross-Pitaevskii hierarchy from many-body dynamics in $d=2,3$ based on spacetime norms}, Ann. Henri Poincar\'{e} \textbf{15} (2014), no. 3, 543--588. 
\bibitem{CPT1} T. Chen, N. Pavlovi\'{c}, N. Tzirakis, \emph{Energy conservation and blowup of solutions for focusing and defocusing Gross-Pitaevskii hierarchies}, Ann. H. Poincar\'{e} (C), Anal. Non-Lin. \textbf{27} (2010), no. 5, 1271-1290.
\bibitem{CPT2} T. Chen, N. Pavlovi\'{c}, N. Tzirakis, \emph{Multilinear Morawetz identities for the Gross-Pitaevskii hierarchy}, Recent advances in harmonic analysis and partial differential equations, 39--62, Contemp. Math., \textbf{581}, Amer. Math. Soc., Providence, RI, 2012.
\bibitem{CT} T. Chen, K. Taliaferro, \emph{Positive Semidefiniteness and Global Well-Posedness of Solutions to the Gross-Pitaevskii Hierarchy}, preprint (2013),\\ \texttt{http://arxiv.org/abs/1305.1404}.
\bibitem{XC1} X. Chen, \emph{The Grillakis-Machedon-Margetis Second order corrections to mean field evolution for weakly interacting Bosons in the case of 3-body interactions}, preprint (2009), \texttt{http://arxiv.org/abs/0911.4153}.
\bibitem{XC2} X. Chen, \emph{Second Order Corrections to Mean Field Evolution for Weakly Interacting Bosons in the Case of Three-body
Interactions}, Arch. Rational Mech. Anal. \textbf{203} (2012), 455--497.
\bibitem{XC3} X. Chen, \emph{Collapsing Estimates and the Rigorous Derivation of the 2d Cubic Nonlinear Schr\"{o}dinger Equation with Anisotropic Switchable Quadratic Traps}, J. Math. Pures Appl. (9) \textbf{98} (2012), no. 4, 450Ð-478.
\bibitem{XC4} X. Chen, \emph{On the Rigorous Derivation of the 3D Cubic Nonlinear Schr\"{o}dinger Equation with A Switchable Quadratic Trap}, preprint (2012), \texttt{http://arxiv.org/abs/1204.0125}.
\bibitem{ChenHolmer1} X. Chen, J. Holmer, \emph{On the rigorous derivation of the 2D cubic nonlinear Schr\"{o}dinger equation from 3D quantum many-body dynamics}, preprint (2012), \texttt{http://arxiv.org/abs/1212.0787}.
\bibitem{ChenHolmer2} X. Chen, J. Holmer, \emph{On the Klainerman-Machedon Conjecture of the Quantum\\ BBGKY Hierarchy with Self-interaction}, preprint (2013),\\ \texttt{http://arxiv.org/abs/1303.5385}.
\bibitem{ChenHolmer3} X. Chen, J. Holmer, \emph{Focusing Quantum Many-body Dynamics: The Rigorous Derivation of the 1D Focusing Cubic Nonlinear Schr\"{o}dinger Equation}, preprint (2013), \texttt{http://arxiv.org/abs/1308.3895}.
\bibitem{CL} Z. Chen, C. Liu, \emph{On the Cauchy problem for Gross-Pitaevskii hierarchies}, J. of Math. Phys. \textbf{52}, Vol 3 (2011), 032103.
\bibitem{Conway} J. Conway, \emph{A Course in Operator Theory}, Graduate Studies in Mathematics, Vol. 21, American Mathematical Society, Providence RI (1991).
\bibitem{Ket} K.B. Davis, M.O. Mewes, M.R. Andrews, N.J. van Druten, D.S. Durfee, D.M. Kurn, and W. Ketterle, \emph{Bose-Einstein Condensation in a Gas of Sodium Atoms}, Phys. Rev. Let. \textbf{75} (1995), no. 22, 3969--3973.
\bibitem{deFinetti1} B. de Finetti, \emph{Funzione caratteristica di un fenomeno aleatorio}, Atti R. Accad. Naz. Lincei, Ser. 6, Mem. Cl. Sci. Fis. Mat. Natur. (1931).
\bibitem{deFinetti2} B. de Finetti, \emph{La pr'{e}vision: ses lois logiques, ses sources subiectives}, Ann. Inst. H. Poincar\'{e} \textbf{7} (1937), no. 1.
\bibitem{Dell'Antonio} G. Dell' Antonio, \emph{On the limits of sequences of normal states}, Comm. Pure Appl. Math., \textbf{20} (1967), 413--429.
\bibitem{DiaconisFreedman} P. Diaconis, D. Freedman, \emph{Finite exchangeable sequences}, Ann. Probab. \textbf{8} (1980), 745--764.
%\bibitem{Dodson1} B. Dodson, \emph{Global well-posedness and scattering for the defocusing, $L^{2}$-critical, nonlinear Schr\"{o}dinger equation when $d = 2$}, preprint (2010),\\ \texttt{http://arxiv.org/abs/1006.1375}.
%\bibitem{Dodson2} B. Dodson, \emph{Global well-posedness and scattering for the defocusing, $L^{2}$-critical, nonlinear Schr\"{o}dinger equation when $d = 1$}, preprint (2010),\\ \texttt{http://arxiv.org/abs/1010.0040}.
\bibitem{Dynkin} E. B. Dynkin, \emph{Classes of equivalent random quantities}, Uspekhi Mat. Nauk (N.S.) \textbf{8} (1953), 125--130.
\bibitem{Einstein} A. Einstein, \emph{Quantentheorie des einatomigen idealen Gases}, Sitzungsberichte der Preussischen Akademie der Wissenschaften 1: 3. (1925).
\bibitem{EESY} A. Elgart, L. Erd\H{o}s, B. Schlein, H.-T. Yau, \emph{Gross-Pitaevskii Equation as the Mean Field Limit of Weakly Coupled Bosons}, Arch. Rational Mech. Anal. \textbf{179} (2006), 265--283.
\bibitem{ES} A. Elgart, B. Schlein, \emph{Mean Field Dynamics of Boson Stars}, Comm. Pure Appl. Math. \textbf{60} (2007), no.4, 500--545. 
\bibitem{ErdosSchlein} L. Erd\H{o}s, B. Schlein, \emph{Quantum dynamics with mean field interactions: a new approach}, J. Stat. Phys. \textbf{134} (2009), no.5, 859--870.
\bibitem{ESY1} L. Erd\H{o}s, B. Schlein, H.-T. Yau, \emph{Derivation of the Gross-Pitaevskii hierarchy for the dynamics of Bose-Einstein condensate}, Comm. Pure Appl. Math. \textbf{59} (2006), no. 12, 1659--1741.
\bibitem{ESY2} L. Erd\H{o}s, B. Schlein, H.-T. Yau, \emph{Derivation of the cubic non-linear Schr\"{o}dinger equation from quantum dynamics of many-body systems}, Invent. Math. \textbf{167} (2007), no. 3, 515--614.
\bibitem{ESY3} L. Erd\H{o}s, B. Schlein, H.-T. Yau, \emph{Rigorous derivation of the Gross-Pitaevskii equation}, Phys. Rev. Lett. \textbf{98} (2007), no.4, 040404.
\bibitem{ESY4} L. Erd\H{o}s, B. Schlein, H.-T. Yau, \emph{Rigorous derivation of the Gross-Pitaevskii equation with a large interaction potential}, J. Amer. Math. Soc. \textbf{22} (2009), no. 4, 1099--1156
\bibitem{ESY5} L. Erd\H{o}s, B. Schlein, H.-T. Yau, \emph{Derivation of the Gross-Pitaevskii equation for the dynamics of Bose-Einstein condensate}, Ann. of Math. (2), \textbf{172} (2010), no. 1, 291--370.
\bibitem{EY} L. Erd\H{o}s, H.-T. Yau, \emph{Derivation of the nonlinear Schr\"{o}dinger equation from a many body Coulomb system}, Adv. Theor. Math. Phys. \textbf{5} (2001), no.6, 1169--1205.
\bibitem{FGS} J. Fr\"{o}hlich, S. Graffi, S. Schwarz, \emph{Mean-field and classical limit of many body Schr\"{o}dinger dynamics for bosons}, Comm. Math. Phys. \textbf{271} (2007), no.3, 681--697.
\bibitem{FKP} J. Fr\"{o}hlich, A. Knowles, A. Pizzo, \emph{Atomism and quantization}, J. Phys. A \textbf{40} (2007), no. 12, 3033-3045.
\bibitem{FKS} J. Fr\"{o}hlich, A. Knowles, S. Schwarz, \emph{On the mean-field limit of bosons with Coulomb two-body interaction},  Comm. Math. Phys. \textbf{288} (2009), no. 3, 1023--1059.
\bibitem{FL} J. Fr\"{o}hlich, E. Lenzmann, \emph{Mean-field limit of quantum Bose gases and nonlinear Hartree equation}, Sem. \'{E}.D.P. 2003-2004, Exp. No. XIX, 26 pp., S\'{e}min. \'{E}qu. D\'{e}riv. Partielles, \'{E}cole Polytech., Palaiseau, 2004.
\bibitem{FrTsYau1} J. Fr\"{o}hlich, T.-P. Tsai, H.-T. Yau, \emph{On a classical limit of quantum theory and the non-linear Hartree equation}, Conf\'{e}rence Mosh\'{e} Flato 1999, Vol. I (Dijon), 189--207, Math. Phys. Stud., 21, Kluwer Acad. Publ., Dordrecht (2000). 
\bibitem{FrTsYau2} J. Fr\"{o}hlich, T.-P. Tsai, H.-T. Yau, \emph{On a classical limit of quantum theory and the non-linear Hartree equation}, GAFA 2000 (Tel Aviv, 1999). Geom. Funct. Anal. 2000, Special Volume, Part I, 57--78.
\bibitem{FrTsYau3} J. Fr\"{o}hlich, T.-P. Tsai, H.-T. Yau, \emph{On the point-particle (Newtonian) limit of the non-linear Hartree equation}, Comm. Math. Phys. \textbf{225} (2002), no. 2, 223--274.
\bibitem{GV1} J. Ginibre, G. Velo, \emph{The classical field limit of scattering theory for nonrelativistic many-boson systems I}, Comm. Math. Phys. \textbf{66} (1979), no. 1, 37--76.
\bibitem{GV2} J. Ginibre, G. Velo, \emph{The classical field limit of scattering theory for nonrelativistic many-boson systems II}, Comm. Math. Phys. \textbf{68} (1979), no. 1, 45--68.
\bibitem{GrSoSt} P. Gressman, V. Sohinger, G. Staffilani, \emph{On the uniqueness of solutions to the periodic 3D Gross-Pitaevskii hierarchy}, J. Funct. Anal. \textbf{266} (2014), no. 7, 4705--4764.
%J. Funct. Anal. 266 (2014), no. 7, 4705Ð4764.
%preprint (2012), \texttt{http://arxiv.org/abs/1212.2987}, to appear in J. of Funct. Anal.
\bibitem{GM} M. Grillakis, M. Machedon, \emph{Pair excitations and the mean field approximation of interacting Bosons, I}, 
Comm. Math. Phys. \textbf{324} (2013), no. 2, 601--636.
\bibitem{GMM1} M. Grillakis, M. Machedon, D. Margetis, \emph{Second order corrections to mean field evolution of weakly interacting bosons, I}, Comm. Math. Phys. \textbf{294} (2010), 273--301.
\bibitem{GMM2} M. Grillakis, M. Machedon, D. Margetis, \emph{Second order corrections to mean field evolution of weakly interacting bosons, II}, Adv. in Math.
\textbf{228} (2011), no. 3, 1788--1815.
\bibitem{Gross} E. Gross, \emph{Structure of a quantized vortex in boson systems}, Nuovo Cimento \textbf{20} (1961), 454--466.
%\bibitem{GuoOhWang} Z. Guo, T. Oh, Y. Wang, \emph{Strichartz estimates for Schr\"{o}dinger equations on irrational tori}, preprint (2013), \texttt{http://arxiv.org/abs/1306.4973}.
%\bibitem{HHK} M. Hadac, S. Herr, H. Koch, \textit{Well-posedness and scattering for the KP-II equation in a critical space}, Ann. Inst. H. Poincar\'{e}, Anal. Non. Lin\'{e}aire, \textbf{26} (2009), no. 3, 917--994.
\bibitem{Hepp} K. Hepp, \emph{The classical limit for quantum mechanical correlation functions}, Comm. Math. Phys. \textbf{35} (1974), 265--277.
%\bibitem{Herr} S. Herr, \emph{The quintic nonlinear Schr\"{o}dinger equation on three-dimensional Zoll manifolds}, Amer. J. Math. \textbf{135} (2013), no. 5, 1271--1290.
\bibitem{HTT} S. Herr, D. Tataru, N. Tzvetkov, \textit{Global well-posedness of the energy-critical nonlinear Schr\"{o}dinger equation with small initial data in $H^1(\mathbb{T}^3)$}, Duke Math. J. \textbf{159} (2011), no. 2, 329--349.
%\bibitem{HTT2} S. Herr, D. Tataru, N. Tzvetkov, \emph{Strichartz estimates for partially periodic solutions to Schr\"{o}dinger equations in 4d and applications}, preprint (2010), \texttt{http://arxiv.org/abs/1011.0591}.
\bibitem{HewittSavage} E. Hewitt, L. J. Savage, \emph{Symmetric measures on Cartesian products}, Trans. Amer. Math. Soc. \textbf{80} (1955), 470--501.
\bibitem{HTX} Y. Hong, K. Taliaferro, Z. Xie, \emph{Unconditional Uniqueness of the cubic Gross-Pitaevskii Hierarchy with Low Regularity}, preprint (2014), \texttt{http://arxiv.org/abs/1402.5347}.
\bibitem{HudsonMoody} R. L. Hudson, G. R. Moody, \emph{Locally normal symmetric states and an analogue of de Finetti's theorem}, Z. Wahrscheinlichkeitstheorie und Verw. Gebiete \textbf{33} (1975/76), 343--351.
%\bibitem{IP} A. Ionescu, B. Pausader, \emph{The energy-critical defocusing NLS on $\mathbb{T}^3$}, Duke Math. J. \textbf{161} (2012), no. 8, 1581--1612.
\bibitem{KSS} K. Kirkpatrick, B. Schlein, G. Staffilani, \emph{Derivation of the cubic nonlinear Schr\"{o}dinger equation from quantum dynamics of many-body systems: the periodic case}, Amer. J. Math. \textbf{133} (2011), no. 1, 91--130.
\bibitem{KM2} S. Klainerman, M. Machedon, \emph{Space-time estimates for null forms and the local existence theorem} Comm. Pure Appl. Math. \textbf{46} (1993), 169--177. 
\bibitem{KM} S. Klainerman, M. Machedon, \emph{On the uniqueness of solutions to the Gross-Pitaevskii hierarchy}, Comm. Math. Phys. \textbf{279}, no.1 (2008), 169--185.
\bibitem{KP} A. Knowles, P. Pickl, \emph{Mean-field dynamics: singular potentials and rate of convergence}, Comm. Math. Phys. \textbf{298} (2010), no.1, 101--139.
%\bibitem{KT} H. Koch, D. Tataru, \textit{Dispersive estimates for principally normal pseudodifferential operators}, Comm. Pure Appl. Math., \textbf{58} (2005), no. 2, 217--284.
%\bibitem{KT2} H. Koch, D. Tataru, \emph{A priori bounds for the 1D cubic NLS in negative Sobolev spaces}, Internat. Math. Res. Notices (2007), article ID rnm053, 36 pages.
%\bibitem{KT3} H. Koch, D.Tataru, \emph{Energy and local energy bounds for the 1-D cubic NLS equation in $H^{-1/4}$}, Ann. Inst. H. Poincar\'{e} Anal. Non Lin\'{e}aire \textbf{29} (2012), no. 6, 955--988. 
\bibitem{Lee} J. O. Lee, \emph{Rate of convergence towards semi-relativistic Hartree dynamics}, Ann. H. Poincar\'{e} (C), \textbf{14} (2013), no.2, 313--346.
\bibitem{LewinNamRougerie} M. Lewin, P. T. Nam, N. Rougerie, \emph{Derivation of Hartree's theory for generic mean-field Bose systems}, Adv. Math. \textbf{254} (2014), 570--621.
\bibitem{LewinNamRougerie2} M. Lewin, P. T. Nam, N. Rougerie, \emph{Remarks on the quantum de Finetti theorem for bosonic systems}, preprint (2013), \texttt{http://arxiv.org/abs/1310.2200}.
\bibitem{LewinSabin1} M. Lewin, J. Sabin, \emph{The Hartree equation for infinitely many particles. I. Well-posedness theory}, preprint (2013), \texttt{http://arxiv.org/abs/1310.0603}, to appear in Comm. Math. Phys.
\bibitem{LewinSabin2} M. Lewin, J. Sabin, \emph{The Hartree equation for infinitely many particles. II. Dispersion and scattering in 2D}, preprint (2013), \texttt{http://arxiv.org/abs/1310.0604}.
\bibitem{LS} E. Lieb, R. Seiringer, \emph{Proof of Bose-Einstein condensation for dilute trapped gases}, Phys. Rev. Lett. \textbf{88} (2002), 170409-1-4.
\bibitem{LSSY2} E. Lieb, R. Seiringer, J. P. Solovej, 
\emph{The quantum-mechanical many-body probel: the Bose gas}, Perspectives in analysis, 97--183, Math. Phys Stud. \textbf{27}, Springer, Berlin (2005).
\bibitem{LSSY} E. Lieb, R. Seiringer, J. P. Solovej, J. Yngvason, \emph{The mathematics of the Bose gas and its condensation}, Oberwolfach Seminars, \textbf{34}. Birkhauser Verlag, Basel, 2005.
\bibitem{LSY} E. Lieb, R. Seiringer, J. Yngvason, \emph{Bosons in a trap: a rigorous derivation of the Gross-Pitaevskii energy functional}, Phys. Rev. A \textbf{61} (2000), 043602.
\bibitem{LSY2} E. Lieb, R. Seiringer, J. Yngvason, \emph{A rigorous derivation of the Gross-Pitaevskii energy functional for a two-dimensional Bose gas. Dedicated to Joel L. Lebowitz}, Comm. Math. Phys. \textbf{224} (2001), no.1, 17--31.
%\bibitem{Liu} B. Liu,
%\emph{A-priori bounds for KdV equation below $H^{-\frac{3}{4}}$}, preprint (2011), \\
%\texttt{http://arxiv.org/abs/1112.5177}.
\bibitem{Luhrmann} J. Luhrmann, \emph{Mean-field quantum dynamics with magnetic fields}, J. Math. Phys. \textbf{53} (2012), no. 2, 022105, 19 pp.
\bibitem{Michelangeli} A. Michelangeli, \emph{Equivalent definitions of asymptotic $100\%$ BEC}. Il Nuovo Cimento B \textbf{123} (2008), no. 2, 181--192.
\bibitem{MichelangeliSchlein} A. Michelangeli, B. Schlein, \emph{Dynamical collapse of boson stars}, Comm. Math. Phys. \textbf{311} (2012), no. 3, 645--687. 
%\bibitem{NS} A. Nahmod, G. Staffilani, \emph{Randomization in Nonlinear PDE and the Supercritical Periodic Quintic NLS in $3D$}, preprint (2013), \texttt{http://arxiv.org/abs/1308.1169}.
\bibitem{Pickl1} P. Pickl, \emph{Derivation of the time dependent Gross-Pitaevskii equation with external fields}, J. Stat. Phys. \textbf{140} (2010), no. 1, 76Ð-89.
\bibitem{Pickl2} P. Pickl, \emph{A simple derivation of mean field limits for quantum systems}, Lett. Math. Phys. \textbf{97} (2011), no.2, 151--164. 
\bibitem{Pitaevskii} L. Pitaevskii, \emph{Vortex lines in an imperfect Bose gas}, Sov. Phys. JETP \textbf{13} (1961), 451--454.
\bibitem{ReedSimon} M. Reed, B. Simon, \emph{Methods of Modern Mathematical Physics, Vol.1: Functional Analysis}, Academic Press, San Diego, CA (1975), Revised and Enlarged Edition.
\bibitem{Robinson} D.W. Robinson, \emph{Normal and locally normal states}, Comm. Math. Phys. \textbf{19} (1970), 219--234.
\bibitem{RodnianskiSchlein} I. Rodnianski, B. Schlein, \emph{Quantum fluctuations and rate of convergence towards mean field dynamics}, Comm. Math. Phys. \textbf{291} (2009), no.1, 31--61. 
\bibitem{Rudin} W. Rudin, \emph{Functional analysis}, McGraw-Hill Series in Higher Mathematics, McGraw-Hill Book Co, New York, NY (1991), Second Edition.
\bibitem{Schlein} B. Schlein, \emph{Derivation of Effective Evolution Equations from Microscopic Quantum Dynamics}, Chapter in \emph{Evolution equations}, edited by D. Ellwood, I. Rodnianski, G. Staffilani, and J. Wunsch, Clay Mathematics Proceedings (2013), Vol 17, 572 pp.
\bibitem{Simon1} B. Simon, \emph{Convergence in trace ideals}, Proc. Amer. Math. Soc. \textbf{83} (1981), 39--43.
\bibitem{Simon2} B. Simon, \emph{Trace Ideals and Their Applications}, Mathematical Surveys and Monographs, Vol. 120, American Mathematical Society, Providence RI (2005), Second Edition.
\bibitem{VS} V. Sohinger, \emph{Local existence of solutions to Randomized Gross-Pitaevskii hierarchies}, preprint (2014), \texttt{http://arxiv.org/abs/1401.0326}.
\bibitem{SoSt} V. Sohinger, G. Staffilani, \emph{Randomization and the Gross-Pitaevskii hierarchy}, preprint (2013), \texttt{http://arxiv.org/abs/1308.3714}.
\bibitem{Spohn} H. Spohn, \emph{Kinetic equations from Hamiltonian Dynamics}, Rev. Mod. Phys. \textbf{52} (1980), no. 3, 569--615.
\bibitem{Stormer} E. St\o rmer, \emph{Symmetric states of infinite tensor products of $C^{*}$ algebras}, J. Funct. Anal. \textbf{3} (1969), 48--68.
\bibitem{Strunk} N. Strunk, \emph{Strichartz estimates for Schr\"{o}dinger equations on irrational tori in two and three dimensions}, preprint (2014), \texttt{http://arxiv.org/abs/1401.6080}.
%\bibitem{Wiener} N. Wiener, \emph{The quadratic variation of a function and its Fourier coefficients}, in: P.R. Masani (Ed.), Collected Works with Commentaries. Volume II: Generalized harmonic analysis and Tauberian Theory; Classical Harmonic and Complex Analysis, in: Mathematicians of Our Time, vol. \textbf{15}, The MIT Press, Cambridge, MA, London, 1979, (1924), XIII, 969 p.
\bibitem{Xie} Z. Xie, \emph{Derivation of a Nonlinear Schr\"{o}dinger Equation with a General power-type nonlinearity}, preprint (2013), \texttt{http://arxiv.org/abs/1305.7240}.
\end{thebibliography}
\end{document}